\date{\today}
\newtheorem{thm}{Theorem}[section]
\newtheorem{lem}[thm]{Lemma}
\let\oldproofname=\proofname
\renewcommand{\proofname}{\rm\bf{\oldproofname}}
 \newtheorem{prop}[thm]{Proposition}
\theoremstyle{definition}
 \newtheorem{defn}{Definition}[section]
 \newtheorem{claim}{Claim}
 \newtheorem{conj}{Conjecture}[section]
\newcommand{\Z}{\mathbb{Z}}
\newcommand{\A}{\mathcal{A}}
\newcommand{\N}{\mathcal{N}}
\newcommand{\M}{\mathcal{M}}
\def\Hom{\text{Hom}}
\begin{document}
\title{Connectedness of Certain Graph Coloring Complexes}
\author{ Nandini Nilakantan\footnote{Department of Mathematics and Statistics, IIT Kanpur, Kanpur-208016, India. nandini@iitk.ac.in.},  Samir Shukla\footnote{{Department of Mathematics and Statistics, IIT Kanpur, Kanpur-208016, India. samirs@iitk.ac.in.}}}
\maketitle

\begin{abstract}
In this article, we consider the bipartite graphs $K_2 \times K_n$. We prove that the connectedness of the complex $\displaystyle \text{Hom}(K_2\times K_{n}, K_m) $ is $m-n-1$ if $m \geq n$ and $m-3$ in the other cases. Therefore, we show that for this class of graphs, $\text{Hom} (G, K_m)$ is exactly $m-d-2$ connected, $m \geq n$, where $d$ is the maximal degree of the graph $G$.
 
\end{abstract}

\noindent {\bf Keywords} : Hom complexes, Exponential graphs, Discrete Morse theory.
 
\noindent 2000 {\it Mathematics Subject Classification} primary 05C15, secondary 57M15

\vspace{.1in}

\hrule

 \section{Introduction}

A classical problem in graph theory is the determination of the chromatic number of a graph 
which finds application in several fields. In 1955, Kneser posed the Kneser conjecture which dealt with the 
computation of the chromatic number of a certain class of graphs \cite{kneser}. This conjecture was proved in 1975 by Lova{\'s}z \cite{l}.
Lova{\'s}z constructed a simplicial complex called the neighborhood complex $\N(G)$ of a graph $G$ to prove the 
Kneser conjecture.

In \cite{BK}, Lova{\'s}z generalised the neighborhood complex by the prodsimplicial complex called the {\it  Hom complex},
 denoted by $\Hom(G,H)$ for graphs $G$ and $H$.  
  In particular, $\Hom(K_2,G)$ (where $K_2$ denotes a complete graph with $2$ vertices)
   and $\N(G)$ are homotopy equivalent. The idea was to be able to estimate the chromatic number
   of an arbitrary graph $G$ by understanding the connectivity of
   the Hom complex from some standard graph into $G$. If $H$ is the complete graph $K_{n}$, then 
    the complexes $\text{Hom} (G, K_n)$ are highly connected. These complexes $\Hom(G, K_n)$ can be considered to be the spaces of $n$- colorings of the graph $G$.  The complexes $\text{Hom} (G,H)$ are not very well understood.
    
Babson and Kozlov made the following conjecture in \cite{BK}.

\begin{conj} \label{conj1}
For a graph $G$ with maximal degree $d$, $\text{Hom}(G,K_n)$ is  at least $n-d-2$ connected.

\end{conj}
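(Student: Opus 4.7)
The plan is to verify Conjecture \ref{conj1} by combining the exponential-graph construction with discrete Morse theory---the two main tools signposted in the keywords. Since the conjecture is very general, I would first attack it on the tractable subclass $G = K_2 \times K_n$ announced in the abstract, for which $d = n-1$ so the predicted lower bound is $m-n-1$.

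My first step is to apply the standard adjunction
\[
\Hom(K_2 \times K_n, K_m) \simeq \Hom(K_2, K_m^{K_n})
\]
together with $\Hom(K_2, H) \simeq \N(H)$. This reduces the problem to bounding the connectivity of the neighborhood complex of the exponential graph $K_m^{K_n}$, whose vertices are injections $f\colon [n] \to [m]$ and where $f \sim g$ iff $f(i) \neq g(j)$ for all $i \neq j$. The resulting combinatorial description is what makes cell-level arguments feasible.

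Next I would build an explicit acyclic matching on the face poset of $\Hom(K_2\times K_n, K_m)$ (or on the equivalent neighborhood complex). Fix an ordering of $[m]$; for each cell, encoded by a compatible assignment of nonempty subsets $A_v \subseteq [m]$, pair it with the cell obtained by toggling a distinguished color at a distinguished vertex, chosen by a lexicographic rule. Acyclicity should follow from Kozlov's element-matching framework. The goal is to show every critical cell has dimension at least $m-n$, yielding $(m-n-1)$-connectivity by the fundamental theorem of discrete Morse theory.

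The hardest part will be identifying the critical cells precisely and verifying that none survive below the expected dimension. In the regime $m<n$, where $K_m$ is too small to colorfully embed $K_2\times K_n$, the matching must be redesigned and one expects only the weaker $(m-3)$-connectivity inherited essentially from $\Hom(K_2, K_m)\simeq \N(K_m)$. Finally, to match the paper's claim of \emph{exact} connectivity, I would need to exhibit a nontrivial homotopy or cohomology class in the top dimension, e.g.\ by constructing an explicit nonbounding cycle built out of a small family of colorings.
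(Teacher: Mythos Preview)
There is a basic mismatch between the statement and your plan. Conjecture~\ref{conj1} is a claim about \emph{all} graphs $G$ of maximal degree $d$; the paper does not prove it. Immediately after stating the conjecture, the paper records that \v{C}uki\'{c} and Kozlov proved it in \cite{CK}, and later Malen strengthened it in \cite{gm}. Verifying the bound on the single family $G=K_2\times K_n$ is not a proof of the conjecture, and the paper makes no attempt to reprove the general result.

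What the paper actually does with its Morse-theoretic machinery is the \emph{opposite} inequality. In the proof of Theorem~\ref{main2}, the lower bound $\text{conn}(\N(G))\geq m-n-1$ is simply quoted from Conjecture~\ref{conj1} (already a theorem), and the discrete Morse matching is built to show that $H_{m-n}(\N(G);\mathbb{Z}_2)\neq 0$, i.e.\ to cap the connectivity from above. Your proposal inverts this: you want the matching to certify that all critical cells sit in dimension $\geq m-n$, giving the lower bound. That is a legitimate alternative route to the lower bound for this particular family, but it is not what the paper does, and it still would not touch the general conjecture.

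Two smaller points. First, the vertex set of $K_m^{K_n}$ consists of \emph{all} set maps $[n]\to[m]$, not just injections; the paper folds away the non-injective non-constant maps (Lemma~\ref{lemma2}), but the constant maps $\langle x\rangle$ survive and are essential to the analysis of critical cells. Second, in the regime $m<n$ there is no redesign of the matching needed: after folding, only constant maps remain, the neighborhood complex is the boundary of an $(m-1)$-simplex, and one reads off $S^{m-2}$ directly.
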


 {\v C}uki{\' c} and Kozlov proved this conjecture in \cite{CK}.
They also proved that for those cases where $G$ is an odd cycle, 
$\text{Hom}(G,K_n)$ is $n-4$ connected for all $n \geq 3.$
In \cite{dk1}, it is proved that for even cycles $C_{2m}$,
$\text{Hom}(C_{2m},K_n)$ is $n-4$ connected for all $n \geq 3.$
In \cite{gm}, Malen proved a strong generalisation of \cite{CK}, by showing that $\text{Hom}(G, K_n)$ is at least $m-D-2$ connected, where $D = \max \limits_{H \subset G} \delta(H)$, $\delta(H)$ being the minimal degree of the graph $H$, and the summation is taken over all the  induced subgraphs $H$ of $G$. Since $D \leq d$, this also implies Conjecture \ref{conj1}.

It is natural to ask whether it is possible to classify the class of graphs $G$ for which the Hom complexes $\text{Hom}(G,K_n)$
 are exactly $n- D -2$  connected. In this article, we consider the bipartite graphs $K_2\times K_n$,
 which are $n-1$ regular graphs and $D= d = n-1$.
  Since $\displaystyle \text{Hom}(K_{2}\times K_{n}, K_m) \simeq \text{Hom}(K_{2}, K_m^{K_{n}})$, where $\displaystyle K_{m}^{K_{n}}$ is an exponential graph,
   it is sufficient to determine the connectedness of $\text{Hom}(K_{2}, K_m^{K_{n}})$
   which is the same as the connectedness of the neighborhood complex $\N (K_{m}^{K_n})$.
   The main result of this article is
\begin{thm}\label{main}

 Let $m, n \geq 2$. Then
 \begin{center}
   $\text{conn}(\text{Hom}(K_2 \times K_n, K_m)) = \begin{cases}
             m-n-1 & \text{if \hspace{0.3 cm}$m \geq n$}\\
                             m-3 & \text{otherwise}.\\
                       \end{cases}$
 \end{center}
\end{thm}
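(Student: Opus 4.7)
The strategy is to pass to the homotopy-equivalent complex $\N(K_m^{K_n})$, as the excerpt already reduces $\Hom(K_2\times K_n,K_m)$ to this neighborhood complex. A vertex of the exponential graph $K_m^{K_n}$ is a function $f:V(K_n)\to V(K_m)$, and two functions $f,g$ are adjacent iff $f(u)\neq g(v)$ whenever $u\neq v$; equivalently, for each color $c$ either one of $f^{-1}(c),g^{-1}(c)$ is empty or both equal the same singleton. A simplex of $\N(K_m^{K_n})$ is then a collection of functions admitting a common neighbor, so it is natural to index simplices by these ``witness'' neighbors.

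For the lower bound in the regime $m\geq n$, the target value $m-n-1$ coincides with the Babson--Kozlov/Malen bound $m-d-2$ for $d=n-1$, so no new argument is required. In the regime $m<n$ we must improve from $m-n-1$ up to $m-3$, and here I would apply discrete Morse theory to the face poset of $\N(K_m^{K_n})$: fix a total order on $V(K_m^{K_n})$ and build an element matching that pairs each non-critical simplex $\sigma$ with $\sigma\triangle\{v(\sigma)\}$, where $v(\sigma)$ is a canonical pivot extracted from its common neighborhood. The extra structure available for $m<n$ is that no function $[n]\to[m]$ is injective, so every vertex of $K_m^{K_n}$ exhibits a repeated color; this repetition can be leveraged to define a pivot that can be freely adjoined to or deleted from a simplex without destroying the common-neighbor property. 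The desired $(m-3)$-connectivity then follows, via the standard Morse collapsing theorem, provided every critical simplex has dimension at least $m-2$.

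For the upper bound I would exhibit a non-trivial homotopy class at the target dimension. When $m\geq n$, one embeds a sphere-like subcomplex coming from $\N(K_m)\simeq S^{m-2}$ or $\Hom(K_n,K_m)$ into $\N(K_m^{K_n})$ (for instance, using nearly-constant functions paired with cyclic permutations of the remaining $m-n+1$ colors), and verifies that the resulting $(m-n-1)$-cycle does not bound. When $m<n$, a parallel construction based on constant functions produces an $(m-2)$-sphere inside $\N(K_m^{K_n})$ whose class I would show is non-zero in $\pi_{m-2}$, proving that the connectedness is at most $m-3$.

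The main obstacle is the acyclic matching in the $m<n$ case: giving the pivot rule $v(\sigma)$ uniformly across all simplices, verifying acyclicity by a gradient-path analysis, and counting the dimensions of the critical simplices precisely. By comparison, the sphere embeddings for the upper bound follow a more classical blueprint once the correct witness functions are identified, and their non-triviality is forced by the matching of upper and lower dimensions produced by the Morse argument.
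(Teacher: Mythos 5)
Your reduction to $\N(K_m^{K_n})$ and your appeal to the \v{C}uki\'c--Kozlov/Malen bound for the lower estimate when $m\geq n$ agree with the paper. But the heart of the theorem is the \emph{upper} bound in the regime $m>n$, namely producing a nonzero class in $\pi_{m-n}$, and here your proposal has a genuine gap. First, there is a dimension error: to show $\mathrm{conn}\leq m-n-1$ you need a nontrivial $(m-n)$-dimensional class; a nonbounding $(m-n-1)$-cycle, as you propose, would give $\mathrm{conn}\leq m-n-2$ and contradict the lower bound you just invoked. Second, ``embedding a sphere-like subcomplex \dots and verifying that the resulting cycle does not bound'' is exactly the step that requires work, and you supply neither a candidate cycle nor a non-bounding argument. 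The paper's entire technical apparatus exists for precisely this step: an acyclic matching $\mu$ built by successively adjoining the constant maps $\langle 1\rangle,\dots,\langle m\rangle$, a complete characterization of the critical cells (Theorem \ref{theorem2} and Lemma \ref{critical1}), and a count of alternating paths showing that every critical $(p+1)$-cell reaches exactly two critical $p$-cells (Theorem \ref{theorem3}). This forces every column of the Morse boundary matrix $\partial_{p+1}$ to have zero sum mod $2$, hence $\mathrm{rank}(\partial_{p+1})<|C_p|$ and $H_{m-n}(\N(G);\Z_2)\neq 0$; universal coefficients and Hurewicz then yield $\pi_{m-n}\neq 0$. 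Nothing in your sketch substitutes for this chain of arguments.

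You have also placed the discrete Morse theory in the wrong case. For $m<n$ no matching is needed: since no map $[n]\to[m]$ is injective, the fold lemma (Lemma \ref{lemma2}) collapses $K_m^{K_n}$ onto the subgraph of constant maps, whose neighborhood complex is the boundary of an $(m-1)$-simplex, i.e.\ $S^{m-2}$; this single observation gives both bounds at once ($(m-3)$-connected, but $\pi_{m-2}\neq 0$). Finally, your case division omits the boundary case $m=n$, where the answer is $-1$ because every bijection $f$ satisfies $N(f)=\{f\}$ and the complex is nonempty and disconnected, and the case $n=2$, where $\mathrm{Hom}(K_2\times K_2,K_m)\simeq S^{m-2}\times S^{m-2}$; both require separate treatment.
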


In the special case, where $m=n+1$, in \cite{NS},  it is shown that 
\begin{thm} \label{thm1}

 Let $n \geq 4$ and $p$ = $\frac{n! (n-1) n} {2}$. Then   \hspace{5 cm}

   \begin{center}
            $H_k(\mathcal{N}(K_{n+1}^{K_n}); \Z_2) = \begin{cases}
            \ Z_2  & \text{if \hspace{0.3 cm}$k = 0, 1$ or $n-1$}\\
            \ Z_2^{p - n! +1} & \text{ if \hspace{0.3 cm}$k = 2$}\\
                             0 & \text{otherwise}.\\
                       \end{cases}$
  \end{center}
  \end{thm}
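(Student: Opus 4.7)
The keywords indicate discrete Morse theory, so I would compute $H_\ast(\N(K_{n+1}^{K_n});\Z_2)$ by constructing an acyclic Morse matching on the face poset of $\N(K_{n+1}^{K_n})$ and reading off the critical cells. A vertex of the exponential graph $K_{n+1}^{K_n}$ is a map $f\colon[n]\to[n+1]$, and because $K_n$ is complete the exponential adjacency $f\sim g$ reduces to $f(v)\neq g(w)$ for all $v\neq w$. A collection $\{f_1,\dots,f_k\}$ is a simplex of $\N(K_{n+1}^{K_n})$ iff there exists $g$ with $g(w)\notin\bigcup_i\{f_i(v):v\neq w\}$ for every $w\in[n]$. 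The first task is to translate this transversal condition into an explicit combinatorial criterion, in particular identifying which $f$ appear as vertices at all and when small families share a common neighbour.

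\textbf{Matching.} Next I would build an explicit element matching on the face poset. A natural recipe is to fix a distinguished injection $\iota\colon[n]\to[n+1]$ and first pair faces using presence/absence of $\iota$ whenever this move stays inside the complex; the leftover faces are then processed by successive matchings using a prescribed lexicographic ordering of the remaining injections, iterating until every non-critical simplex is paired. Acyclicity is enforced by grading each face by the first blocked index in the lexicographic sequence, which furnishes a compatible Morse function. The $S_{n+1}\times S_n$-symmetry of $K_{n+1}^{K_n}$ can be exploited both to cut down the case analysis and to transport the matching across orbits; matchings of this general shape have been used successfully in \cite{CK} and \cite{gm} for related exponential graphs.

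\textbf{Counting critical cells and computing homology.} The target enumeration is $c_0=c_1=c_{n-1}=1$ and $c_2=p-n!+1$, with all other $c_k=0$. The appearance of $p=\binom{n}{2}n!$ together with the correction $-n!+1$ strongly suggests that the critical $2$-cells correspond to pairs (injection, unordered pair of coordinates) modulo one collapsed $S_n$-orbit, which gives a concrete sanity check on the matching. Once only these critical cells survive, connectedness forces $\partial_1=0$; for $n\geq5$ the absence of critical cells in dimensions $3,\dots,n-2$ leaves the $(n-1)$-cell as an automatic cycle that is not a boundary and contributes $\Z_2$ to $H_{n-1}$, while in the boundary case $n=4$ one additionally has to verify $\partial_3=0$ directly. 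The vanishing $\partial_2=0$ would be established either by exhibiting $c_2$ independent $\Z_2$-cycles via the symmetry or by an independent computation of $\beta_1=1$.

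\textbf{Main obstacle.} The principal difficulty, as is typical for Morse-theoretic Betti computations, is arranging the matching so that \emph{no} spurious critical cells appear in the middle dimensions $3,\dots,n-2$: every unmatched simplex of such intermediate dimension must be shown to be successfully paired by the recursive lex procedure, and this requires a careful understanding of when a partial list of rows in $[n+1]^{[n]}$ admits a cross-positionally forbidden common neighbour. I would expect this step to be handled by induction on $n$, anchored on a direct (possibly computer-assisted) verification of the base case $n=4$, with the symmetry used to propagate the matching uniformly across $S_{n+1}\times S_n$-orbits.
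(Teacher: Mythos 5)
This theorem is not actually proved in the present paper; it is quoted from \cite{NS}, and what the paper develops is the analogous Morse-theoretic machinery for general $m\ge n$. Measured against that machinery (which is the $m=n+1$ argument in generalized form), your proposal picks the right toolbox but the plan as written would not go through.

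The decisive gap is your assumption that one can arrange a matching whose critical cells number exactly $c_0=c_1=c_{n-1}=1$ and $c_2=p-n!+1$ with all Morse boundary maps vanishing. Nothing in your lexicographic recipe is shown to produce this, and it is not what happens in the actual argument. The real proof first folds $K_{n+1}^{K_n}$ onto the subgraph $G$ of constant and injective maps (a step you omit entirely, and the one that makes the combinatorics tractable), and then matches by adjoining the \emph{constant} maps $\langle 1\rangle,\langle 2\rangle,\dots$ in increasing order --- constants rather than a distinguished injection, because $N(\langle x\rangle)$ is huge and adjoining $\langle x\rangle$ almost always stays inside the complex, which is what lets the matching pair off nearly everything. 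The surviving critical cells are of the form $\{\langle x\rangle\}\cup\tau$ with $\tau$ a set of injective common neighbours of some $z_1\cdots z_n$; for $m=n+1$ the disjointness condition on the sets $X_r^\tau$ forces $|\tau|\le 2$, which is how one gets $C_k=\emptyset$ for $3\le k\le n-2$, together with the single cell $\{\langle 2\rangle,\dots,\langle n+1\rangle\}$ in dimension $n-1$. Crucially there are \emph{many} critical $1$- and $2$-cells (indexed by the choice of $z_1\cdots z_n$ and of $\tau$), and $\partial_2$ is genuinely nonzero: the values $\beta_1=1$ and $\beta_2=p-n!+1$ emerge only after enumerating the alternating paths between critical $2$-cells and critical $1$-cells and computing the rank of the resulting incidence matrix over $\Z_2$. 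Your plan defers exactly these points --- acyclicity of the proposed matching, identification of its critical cells, and the boundary computation --- to an unspecified induction with a computer-checked base case, so as it stands it is a programme rather than a proof. The sound parts are the transversal reformulation of simplices, the remark that $c_0=1$ forces $\partial_1=0$, and the flag that $n=4$ needs separate care because $n-1=3$ abuts the critical $2$-cells.
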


\section{Preliminaries}

A  graph $G$ is a  pair $(V(G), E(G))$,  where $V(G)$  is the set of vertices of $G$  and $E(G) \subset V(G)\times V(G)$
the set of edges.
 If $(x, y) \in E(G)$, it is also denoted by $x \sim y$. Here, $x$ is said to be adjacent to $y$.
The {\it degree} of a vertex $v$ is defined by  $\text{deg}(v) =
|\{y \in V(G) \ | \ x \sim y\}|$. Here $| X|$ represents the
cardinality of the set $X$.

 A {\it  bipartite graph} is a graph $G$ with subsets
$X$ and $Y$ of $V(G)$ such that $V(G)=X   \sqcup Y$ and $(v,w) \notin E(G)$ if $\{v,w\} \subseteq X$ or $\{v,w\} \subseteq Y$.
Examples of bipartite graphs include the even cycles $C_{2n}$
where $V(C_{2n})=\{1,2,\dots, 2n\}$ and $E(C_{2n})=\{(i,i+1) \ | \
1\leq i \leq 2n-1\}\cup\{(1,2n)\}$. In this case $X=\{1,3,5,\ldots
,2n-1\}$ and $Y=\{2,4,6,$ $\dots, 2n\}$.
 
A {\it graph homomorphism} from  a graph $G$ to graph $H$ is a function
$\phi: V(G) \to V(H)$ such that, $$(v,w) \in E(G) \implies (\phi(v),\phi(w)) \in E(H).$$

A {\it finite abstract simplicial complex X} is a collection of
finite sets such that if $\tau \in X$ and $\sigma \subset \tau$,
then $\sigma \in X$. The elements  of $X$ are called {\it simplices}
of $X$. If $\sigma \in X$ and $|\sigma |=k+1$, then $\sigma$ is said
to be $k-dimensional$. A $k-1$ dimensional subset of a $k$ simplex $\sigma$
is called a {\it facet} of $\sigma$.
A {\it prodsimplicial complex} is a polyhedral complex each of whose cells is a direct product of simplices (\cite{dk}).

Let $v$ be a vertex of the graph $G$. The {\it
neighborhood  of $v$ } is defined by $N(v)=\{ w \in V(G) \ |  \
(v,w) \in E(G)\}$.  If $A\subset V(G)$, the neighborhood of $A$
is defined as $N(A)= \{x \in  V(G) \ | \ (x,a) \in E(G)
\,\,\forall\,\, a \in A \}$.

The {\it neighborhood complex}, $\N(G)$ of a graph $G$ is the abstract simplicial complex whose vertices are all the non isolated vertices of $G$ and whose simplices are those subsets of $V(G)$ which have a common neighbor.

Consider distinct vertices $u$ and $v$ in $G$ such that $N(u) \subset N(v)$. 
The subgraph $G\setminus \{u\}$ of $G$, where $V(G \setminus \{u\}) = V(G) \setminus \{u\}$ and 
$E(G \setminus \{u\}) = E(G) \setminus \{(v, w) \in E(G) \ | \ \text{at least one of}\, v  \, \text{or} \, w \, 
\text{is} \, u\}$, is called a {\it fold} of $G$.

For any two graphs $G$ and $H$, $\Hom(G,H)$ is the polyhedral complex whose cells are indexed by all functions $\eta: V(G) \to 2^{V(H)}\setminus \{\emptyset\}$, such that if $(v,w) \in V(G)$, then $\eta(v) \times \eta(w) \subset E(H)$.

 Elements of  $\Hom(G,H)$ are called cells and are denoted by $(\eta(v_1), \dots, $ $ \eta(v_k))$,
 where $V(G)=\{v_1, \dots, v_k\}$.
A cell $(A_1, \dots, A_{k})$ is called a {\it face} of $B=(B_1,
\dots B_k)$, if $A_{i} \subset B_{i}$ $\forall\, 1\leq i\leq k$. The
Hom complex is often referred to as a topological space. Here, we
are referring to the geometric realisation of the order complex of
the poset. The simplicial complex whose facets are the chains of the
Poset $P$ is called the order complex of $P$.

A topological space $X$ is said to be $n$-{\it connected} if $\pi_\ast(X)= 0$ for all $ \ast \leq n$.
 By convention, $\pi_0(X)= 0 $ means $X$ is connected.
The connectivity of a topological space $X$ is denoted by
$\text{conn}(X)$, {\it i.e.}, $\text{conn}(X)$ is the largest
integer $m$ such that $X$ is $m$-connected.
If $X$ is a non empty, disconnected space, it is said to be $-1$ connected and if it is empty,
it is said to be $-\infty$  connected.

We now review some of the constructions related to the existence of an {\it internal hom}
which is related to the categorical product.  All these details can be found in \cite{pj, cj, sm}.

\begin{itemize}
\item
The {\it categorical product} of two graphs $G$ and $H$, denoted by $G\times H$ is the graph where
$V(G\times H)=V(G)\times V(H)$ and  $(g,h) \sim (g',h')$ in $G\times H$  if $g \sim g'$  and $h \sim h'$ in $G$
and $H$ respectively.

\item
If $G$ and $H$ are two graphs, then the {\it exponential graph} $\displaystyle H^{G}$ 
is defined to be the graph where $\displaystyle V(H^{G})$ contains all the set maps from $V(G)$ to $V(H)$.
Any two vertices  $f$  and $f'$ in $\displaystyle V(H^{G})$ are  said to be adjacent, if $ v \sim v'$  in $G$ implies that $f(v) \sim f'(v')$ in $H$.

\end{itemize}

Using tools from poset topology (\cite{bj}), it can be shown that given a poset $P$ and a poset map
$c:P \rightarrow P$ such that $ c\circ c =c$ and $c (x) \geq x \, \forall \,x \in P$, there is a strong deformation retract induced by $c :P \rightarrow c(P)$ on the relevant spaces. Here, $c$ is called the {\it closure map}.

From \cite[Proposition 3.5]{ad} we have a relationship between the exponential graph and the categorical product in the $\text{Hom}$-complex.

\begin{prop}
Let $G$, $H$ and $K$ be graphs. Then  $\displaystyle \text{Hom}(G \times H, K)$ can be included in $\displaystyle \text{Hom}(G, K^{H})$  so that $\displaystyle \text{Hom}(G \times H, K)$ is the image of the closure map on $\displaystyle \text{Hom}(G, K^{H})$. In particular, there is a strong deformation retract  $\displaystyle |\text{Hom}(G \times H, K)| \hookrightarrow |\text{Hom}(G, K^{H})|$.
\end{prop}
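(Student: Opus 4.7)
The plan is to exhibit the inclusion, write down an explicit closure map, and then appeal to the poset-topology closure lemma recalled just above the proposition. First, I would define $\iota : \text{Hom}(G\times H,K)\to\text{Hom}(G,K^H)$ on cells by sending $\eta:V(G)\times V(H)\to 2^{V(K)}\setminus\{\emptyset\}$ to the cell
$$\iota(\eta)(g)=\{f:V(H)\to V(K) \mid f(h)\in\eta(g,h)\text{ for every }h\in V(H)\}.$$
Using that $(g,h)\sim(g',h')$ in $G\times H$ exactly when $g\sim g'$ and $h\sim h'$, the adjacency hypothesis on $\eta$ translates directly into the adjacency hypothesis for a cell of $\text{Hom}(G,K^H)$, so $\iota(\eta)$ is a valid cell; the map $\iota$ is visibly order preserving and injective on the cell posets.

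Next, for a cell $\tau$ of $\text{Hom}(G,K^H)$, I would set $\pi_{g,h}(\tau)=\{f(h)\mid f\in\tau(g)\}\subseteq V(K)$ and define the candidate closure by
$$c(\tau)(g)=\{f:V(H)\to V(K)\mid f(h)\in\pi_{g,h}(\tau)\text{ for every }h\in V(H)\}.$$
The routine facts $c(\tau)\supseteq\tau$, $c$ order preserving, and $c\circ c=c$ all follow from the identity $\pi_{g,h}(c(\tau))=\pi_{g,h}(\tau)$, which in turn is immediate once one notes that every value in $\pi_{g,h}(\tau)$ is already realised by some function in $\tau(g)\subseteq c(\tau)(g)$. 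The delicate point, which I expect to be the main obstacle, is checking that $c(\tau)$ is a bona fide cell of $\text{Hom}(G,K^H)$. This reduces to verifying that the auxiliary assignment $\eta_\tau(g,h):=\pi_{g,h}(\tau)$ is itself a valid cell of $\text{Hom}(G\times H,K)$: for $(g,h)\sim(g',h')$ and representatives $a=f(h)\in\pi_{g,h}(\tau)$, $b=f'(h')\in\pi_{g',h'}(\tau)$ with $f\in\tau(g)$, $f'\in\tau(g')$, the hypothesis $\tau(g)\times\tau(g')\subset E(K^H)$ together with $h\sim h'$ yields $a\sim b$ in $K$.

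To identify the image of $c$ with $\iota(\text{Hom}(G\times H,K))$, I would observe that $c(\iota(\eta))=\iota(\eta)$ because $\pi_{g,h}(\iota(\eta))=\eta(g,h)$, while conversely every $c$-fixed $\tau$ equals $\iota(\eta_\tau)$ by the definition of $c$. Having verified that $c$ is an order-preserving idempotent poset map with $c(x)\geq x$, the closure lemma from \cite{bj} stated just before the proposition yields a strong deformation retract of $|\text{Hom}(G,K^H)|$ onto $|c(\text{Hom}(G,K^H))|=|\iota(\text{Hom}(G\times H,K))|$, which is the required conclusion. Once the cell-validity check in the previous paragraph is in hand, the rest of the argument is purely formal bookkeeping translating between the product-graph and exponential-graph adjacency axioms.
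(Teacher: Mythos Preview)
The paper does not supply its own proof of this proposition: it is quoted from \cite[Proposition~3.5]{ad} and used as a black box. Your argument is correct and is essentially the proof given in Dochtermann's paper --- define the inclusion by pulling back along the product projection, define the closure by enlarging each $\tau(g)$ to the full product $\prod_h \pi_{g,h}(\tau)$, and invoke the closure lemma. The verification you flag as ``delicate'' (that $c(\tau)$ is a genuine cell, equivalently that $\eta_\tau$ is a cell of $\Hom(G\times H,K)$) is handled exactly as you describe, and the identification of the image of $c$ with the image of $\iota$ via $\pi_{g,h}(\iota(\eta))=\eta(g,h)$ is correct. There is nothing to add; your write-up could stand in for the omitted proof.
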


From \cite[Proposition 5.1]{BK} we have the following result which allows us to replace a graph by its fold in the $\text{Hom}$ complex.

\begin{prop}\label{fold}
Let $G$ and $H$ be graphs such that $u, v $ are distinct vertices of $G$ and $N(u) \subset N(v)$. The inclusion $i :G \setminus \{u\}  \hookrightarrow G$ respectively,
 the homomorphism  $\phi:G \rightarrow G \setminus \{u\} $ which maps $u$ to $v$ and fixes all the other vertices,
  induces the homotopy equivalence  $i_{H} :\text{Hom}(G,  H) \rightarrow \text{Hom}(G \setminus \{u\},  H)$,‪ 
  respectively  $\phi_{H} :\text{Hom}(G \setminus \{u\},  H) \rightarrow \text{Hom}(G,  H)$.
\end{prop}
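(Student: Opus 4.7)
The strategy is to use the closure--map tool from poset topology mentioned just before the statement: if $c:P\to P$ is a monotone idempotent poset map with either $c(x)\ge x$ for all $x$ (closure) or $c(x)\le x$ for all $x$ (opening), then $|P|$ strong-deformation retracts onto $|c(P)|$. The plan is to apply this twice inside the face poset of $\Hom(G,H)$ and then identify the final fixed-point subposet with $\Hom(G\setminus\{u\},H)$ in such a way that $i_H$ and $\phi_H$ emerge as the natural homotopy equivalences.

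First I would define a closure $c:\Hom(G,H)\to\Hom(G,H)$ that enlarges the $u$-coordinate: on a cell $\eta$ set $c(\eta)(u)=\eta(u)\cup\eta(v)$ and $c(\eta)(w)=\eta(w)$ for $w\ne u$. The only nontrivial legality check is for edges $(u,w)\in E(G)$, where I need $(\eta(u)\cup\eta(v))\times\eta(w)\subset E(H)$: the first half holds because $\eta$ was already a cell, and the second because $w\in N(u)\subset N(v)$ forces $(v,w)\in E(G)$ and hence $\eta(v)\times\eta(w)\subset E(H)$. Monotonicity, $c\circ c=c$, and $c(\eta)\ge\eta$ are immediate, so $|\Hom(G,H)|$ deformation retracts onto the subposet $X'\subset\Hom(G,H)$ of cells satisfying $\eta(v)\subset\eta(u)$.

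Next, on $X'$ I would define an opening $e$ that shrinks the $u$-coordinate to $\eta(v)$: $e(\eta)(u)=\eta(v)$ and $e(\eta)(w)=\eta(w)$ for $w\ne u$. The same containment $N(u)\subset N(v)$ shows $e(\eta)$ is a legal cell; $e\circ e=e$ and monotonicity are clear, and $e(\eta)\le\eta$ because $\eta(v)\subset\eta(u)$ on $X'$. The opening version of the poset-topology lemma then supplies a second strong deformation retract of $|X'|$ onto $|e(X')|$, and $Y:=e(X')$ is precisely the subposet of cells with $\eta(u)=\eta(v)$.

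Finally, restriction to $V(G)\setminus\{u\}$ is a poset isomorphism $r:Y\xrightarrow{\cong}\Hom(G\setminus\{u\},H)$ whose inverse is exactly $\phi_H$. Since $(e\circ c)(\eta)$ agrees with $\eta$ at every vertex other than $u$, one has $r\circ e\circ c=i_H$, exhibiting $i_H$ as the composition of two strong deformation retracts and an isomorphism, hence a homotopy equivalence. Dually, $\phi_H$ factors as $r^{-1}$ followed by the inclusion $Y\hookrightarrow\Hom(G,H)$, which is a homotopy inverse to $e\circ c$, so $\phi_H$ is a homotopy equivalence as well. The only substantive technical point in the whole argument is verifying that the two redefinitions actually produce legal cells of the Hom complex; in both cases this is forced by the single containment $N(u)\subset N(v)$, and everything else is routine poset bookkeeping.
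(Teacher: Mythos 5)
Your argument is correct. The paper does not prove this proposition at all—it is quoted verbatim from Babson--Kozlov \cite{BK}, Proposition 5.1—but your two-step closure/opening construction (first enlarge $\eta(u)$ to $\eta(u)\cup\eta(v)$, then shrink it to $\eta(v)$, then restrict away from $u$) is exactly the standard proof of the fold lemma, and every step checks out; the only unstated point is that $(u,v)\notin E(G)$ (otherwise $N(u)\subset N(v)$ would force a loop at $v$), which is needed so that the legality check really only concerns edges $(u,w)$ with $w\neq v$.
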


\section{Tools from Discrete Morse Theory}

We introduce some tools from Discrete Morse Theory  which have been
used in this article. R. Forman in \cite{f} introduced what has now
become a standard tool in Topological Combinatorics, Discrete Morse
Theory. The principal idea of Discrete Morse Theory (simplicial) is
to pair simplices in a complex in such a way that they can be
cancelled by elementary collapses. This will reduce the original
complex to  a homotopy equivalent complex, which is not necessarily
simplicial, but which has fewer cells. More details of discrete
Morse theory can be found in \cite{jj} and \cite{dk}.

\begin{defn}
A {\it partial matching} in a Poset $P$ is a subset $\M$ of $P \times P$ such that
\begin{itemize}
\item $(a,b) \in \M$ implies $b\succ a$, {\it i.e. $a<b$ and $\not\exists \,c$ such that $a<c<b$}.
\item Each element  in $P$ belongs to at most one element in $\M$.
\end{itemize}
\end{defn}

In other words, if $\M$ is a {\it partial matching} on a Poset $P$
then, there exists  $A \subset P$ and an injective map $f: A
\rightarrow P\setminus A$ such that $f(x)\succ x$ for all $x \in A$.

\begin{defn}
An {\it acyclic matching} is a partial matching  $\M$ on the Poset $P$ such that there does not exist a cycle
\begin{eqnarray*}
f(x_1)  \succ x_1 \prec f( x_2) \succ x_2  \prec f( x_3) \succ x_3 \dots   f(x_t) \succ x_t  \prec f(x_1), t\geq 2.
\end{eqnarray*}

\end{defn}

If we have an acyclic partial matching on $P$, those elements of $P$ which do not belong to the matching are said to be {\it critical }. To obtain the  desired homotopy equivalence, the following result is used.

\begin{thm} (Main theorem of Discrete Morse Theory)\label{dmt}\cite{f}
\mbox{}

Let $X$ be a simplicial complex and let $\A$ be an acyclic matching such that the empty set is not critical. 
Then, $X$ is homotopy equivalent to a cell complex which has a $d$ -dimensional cell for each $d$ -dimensional critical face of $X$ together with an additional $0$-cell.

\end{thm}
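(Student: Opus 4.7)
The plan is to reduce $X$ to a CW complex with one cell per critical simplex by a sequence of elementary collapses driven by the acyclic matching $\A$. Recall that an \emph{elementary collapse} removes a pair $(\sigma,\tau)$ with $\tau \succ \sigma$ in which $\tau$ is the unique proper coface of $\sigma$ in the ambient complex; this operation is a strong deformation retract. The overall aim is to show that the matched pairs of $\A$ can be processed one at a time, leaving behind precisely the critical simplices.

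First, I would introduce the \emph{modified Hasse diagram} $H_\A$ of the face poset: its vertices are the non-empty simplices of $X$ together with $\emptyset$, with an edge in the usual downward direction for every unmatched covering relation and the reversed upward edge for every matched pair. A routine combinatorial check shows that the definition of acyclicity of $\A$ is equivalent to $H_\A$ having no directed cycles, because any such cycle must alternate between upward (matched) and downward (unmatched) edges, and this alternating pattern is exactly the zig-zag in the paper's definition. Consequently $H_\A$ admits a linear extension, from which one extracts a real-valued \emph{discrete Morse function} $f$ on $X$ that is strictly monotone on unmatched chains and takes a common value on each matched pair; the critical simplices of $f$ are precisely the unmatched elements of $X$.

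Second, I would analyse the filtration of $X$ by sublevel sets $X_c = \{\sigma : f(\sigma) \leq c\}$ and track how the homotopy type changes as $c$ increases. At a value $c$ corresponding to a critical $d$-simplex $\sigma$, the sublevel set $X_c$ is obtained from $X_{c-\epsilon}$ by attaching a single $d$-cell, since all proper faces of $\sigma$ carry strictly smaller $f$-values and therefore already lie in $X_{c-\epsilon}$. At a value corresponding to a matched pair $(\sigma,\tau)$, the passage from $X_{c-\epsilon}$ to $X_c$ is an elementary expansion, which is a homotopy equivalence. Concatenating these changes, $X$ is homotopy equivalent to a CW complex built by attaching one $d$-cell for each critical $d$-simplex onto an initial basepoint $0$-cell that corresponds to the (non-critical) empty face; the hypothesis that $\emptyset$ is not critical ensures precisely that this basepoint is genuinely extra and not already identified with a critical vertex.

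The main obstacle is the first step, namely the rigorous translation between the cycle-based definition of acyclicity of $\A$ and the graph-theoretic acyclicity of $H_\A$, followed by the extraction of a well-defined discrete Morse function from a linear extension. Both points are standard but demand careful bookkeeping to avoid orientation or sign errors and to ensure the Morse function respects all covering relations correctly. Once this is settled, the remainder of the argument is a straightforward CW-theoretic analysis of the filtration, each step being either a strong deformation retract (matched pair) or a cell attachment (critical simplex), which together yield the desired homotopy equivalence.
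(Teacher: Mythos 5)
The paper offers no proof of this statement: it is quoted from Forman \cite{f} and used as a black box, so there is nothing in-paper to compare against. Your sketch follows the standard route in the literature (Chari--Forman): encode the acyclic matching in the modified Hasse diagram, check that acyclicity of $\A$ is equivalent to the absence of directed cycles there, extract a discrete Morse function from a linear extension, and then sweep through the induced filtration, reading matched pairs as elementary collapses and critical simplices as cell attachments. That is the right architecture, and your identification of the two alternating-edge notions of acyclicity is correct.

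Two places in the sketch carry the real weight of the proof and are currently glossed over. First, the sublevel set $X_c=\{\sigma: f(\sigma)\le c\}$ as you define it need not be a subcomplex; one must use Forman's level subcomplexes (the union of the closed simplices of value at most $c$) and then verify that a matched pair enters as a free pair and a critical simplex enters with its entire boundary already present. Second, and more substantively, ``concatenating these changes'' is not automatic: after the first critical cell is attached, the intermediate spaces are only homotopy equivalent to CW complexes, not equal to them, so each subsequent attachment must be transported across the accumulated homotopy equivalences (the lemma that a homotopy equivalence of the base space induces a homotopy equivalence of the adjunction spaces). This transport step is where most of Forman's actual argument lives, and a complete proof must include it. Finally, your account of the extra $0$-cell is slightly off: the pair $(\emptyset,v)$ cannot be collapsed topologically, since a point does not deformation retract onto the empty space; the additional $0$-cell is the vertex $v$ that the matching pairs with $\emptyset$, and the hypothesis that $\emptyset$ is not critical is precisely what guarantees such a surviving vertex exists. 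With these points repaired, your outline becomes the standard proof.
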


\section{Main Result}

To prove  Theorem \ref{main}, we first construct
an acyclic matching on the face poset of $\N (K_{m}^{K_{n}})$.
We then consider the corresponding Morse Complex to determine the connectedness of 
$\Hom(K_2 \times K_n, K_m)$.
In this article $n \geq 3, m-n = p \geq 1$ and $[k]$ denotes the
set $\{1,2,\dots, k\}$. Any vertex in the exponential graph
$K_{m}^{K_n}$ is a set map $f: [n] \rightarrow [m]$.

\begin{lem} \label{lemma2}

The graph $K_{m}^{K_n}$ can be folded onto the graph $G$, where the vertices
$f \in V(G)$ have images of cardinality either 1 or $n$.

\end{lem}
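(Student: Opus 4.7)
The plan is to exhibit, for each map $f : [n] \to [m]$ with $2 \leq |\text{Im}(f)| \leq n-1$, a distinct vertex $g \in V(K_m^{K_n})$ with $N(f) \subset N(g)$, and then apply Proposition \ref{fold} iteratively until only maps with image of size $1$ or $n$ remain.

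First I would unpack the adjacency rule in $K_m^{K_n}$: since any two distinct vertices of $K_n$ (respectively $K_m$) are adjacent, two maps $f, h \in V(K_m^{K_n})$ are adjacent if and only if $h(v) \neq f(v')$ for every pair $v \neq v'$ in $[n]$. In particular, for a constant map $g_c \equiv c$, the condition $h \sim g_c$ simplifies to $c \notin \text{Im}(h)$.

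The core step is the following domination: if $f$ attains some value $c$ at two distinct points $i \neq j$, then $N(f) \subset N(g_c)$. Indeed, for any $h \sim f$ and any $v \in [n]$, at least one of $v \neq i$ or $v \neq j$ must hold, so applying the adjacency condition with the pair $(v, i)$ or $(v, j)$ forces $h(v) \neq c$; hence $c \notin \text{Im}(h)$ and $h \sim g_c$. The hypothesis $|\text{Im}(f)| \leq n-1$ supplies the required repeated value $c$, while $|\text{Im}(f)| \geq 2$ ensures that $g_c \neq f$, so Proposition \ref{fold} permits folding $f$ away.

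To conclude, I would order the non-injective, non-constant maps arbitrarily and fold them one at a time. Since each inclusion $N(f) \subset N(g_c)$ is established in the ambient graph $K_m^{K_n}$ and the dominating vertex $g_c$ has $|\text{Im}(g_c)| = 1$ (and so is never itself removed), the inclusion restricts to every intermediate subgraph, and the process terminates at the graph $G$ of the lemma. The only mildly subtle point is the asymmetry $i \neq j$ in the repeated value: the adjacency condition indexed by $(v, v')$ constrains $h(v)$ only for $v' \neq v$, so a singly attained value cannot rule itself out of $\text{Im}(h)$, which is precisely why injections and constants together form the target of the fold.
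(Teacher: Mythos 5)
Your proposal is correct and follows essentially the same route as the paper: for a non-injective, non-constant $f$ with repeated value $c$, the constant map at $c$ dominates $f$ (since every neighbor of $f$ must avoid $c$ in its image), and one folds away all such $f$ iteratively. Your additional remarks — that the dominating constant map is never itself removed and that the neighborhood inclusion persists in the intermediate subgraphs — are correct refinements of the same argument.
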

\begin{proof} Consider the vertex $f$ such that  $1<|$Im $f|$ $< n$.  Since $f$ is not injective
there exist distinct $i, j \in [n]$ such that $f(i)=f(j) =
\alpha$. Consider  $\tilde{f} \in V(K_{m}^{K_n})$ such that
$\tilde{f}([n])=\alpha$. By the definition of the exponential graph, any
neighbor $h$  of $f$ will not have $\alpha$ in its image  and
therefore $h$ will be a neighbor of $\tilde{f}$ thereby showing that $N(f) \subseteq N(\tilde{f})$.
 $K_{m}^{K_n}$ can be folded to the subgraph $K_{m}^{K_n}\setminus \{f\}$ of $K_{m}^{K_n}$.
  Repeating the above argument for all noninjective, non constant maps from $[n]$ to $[m]$, $K_{m}^{K_n}$ 
can be folded to the graph $G$ whose vertices are either constant maps or injective maps from $[n]$ to $[m]$.
\end{proof}

From Proposition \ref{fold}, we observe that $\N (K_{m}^{K_n})  \simeq \N (G)$. Hence, it
is sufficient to study the homotopy type of $\N (G)$.\\
We now fix the following notations. \\
If $f \in V(G)$ and $f([n])=\{x\}$, then $f$ is denoted by $<x>$. 
In the other cases the string $a_1a_2\dots a_n$ denotes the vertex $f$ where
$a_i =f(i)$,  $1\leq i \leq n.$ Hence, if the notation $a_1a_2\dots a_{n}$ is used, 
it is understood that  for $1\leq i<j\leq n$, $a_i \neq a_j$.
Let $\sigma = \{f_1, \ldots, f_q\} \subset V(G)$. Then 
$X_i^{\sigma} = \{f_j(i) \ | \ 1 \leq j \leq q\}$,
$A_i^{\sigma} = [m] \setminus \bigcup \limits_{j \neq i}X_{j}^{\sigma}$ and 
$X_{\sigma} = \bigcup\limits_{i = 1}^{n} X_{i}^{\sigma}$.

\subsection{Construction of an acyclic matching}
Let $(P, \subset)$ be the face poset of $\N(G)$, $S_1= \{\sigma \in P \ |  <1> \ \notin \sigma , \sigma \, \cup <1> \ \in P \}$
and the map $\mu_1 : S_1 \rightarrow P \setminus S_1$ be defined by $\mu_1(\sigma) = \sigma \, \cup <1>$.\\
Let $S_1'= P \setminus \{S_1, \mu_1(S_1)\}$ and for  $2 \leq i \leq m ,$ define 
$S_i = \{\sigma \in S_{i-1}' \ | <i> \ \notin \sigma , \sigma \, \cup <i> \ \in S_{i-1}'\}$ and $\mu_i : S_i 
\rightarrow S_{i-1}' \setminus S_i$ by  $\mu_i(\sigma) = \sigma \, \cup <i>,$ where $S_i'= 
S_{i-1}' \setminus \{S_i, \mu_i(S_i)\}.$ Since $S_i \cap S_j  = \emptyset \ \forall \ 1 \leq i \neq j \leq m$, if 
$\sigma \in S = \bigcup \limits_{i= 1}^{m} S_i$, then $\sigma $ belongs to exactly one $S_i.$
Define $\mu : S  \rightarrow P \setminus S$ by $\mu(\sigma) = \mu_i(\sigma)$ where $\sigma \in S_i$.
By the construction, $\mu$ is an injective map and  hence $\mu$ is a well defined partial matching on the poset $P$.
 
\begin{prop}
 $\mu$ is an acyclic matching on $P$.
  \end{prop}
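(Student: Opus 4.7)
The plan is to argue by contradiction. Suppose there exists a cycle
\[
x_1 \prec \mu(x_1) \succ x_2 \prec \mu(x_2) \succ \cdots \prec \mu(x_t) \succ x_1, \qquad t \geq 2,
\]
with the $x_i$ pairwise distinct. For each $i$ (indices taken modulo $t$) let $k_i$ be the unique index with $x_i \in S_{k_i}$, so $\mu(x_i) = x_i \cup <k_i>$. The covering relation $\mu(x_i) \succ x_{i+1}$ together with $x_{i+1} \neq x_i$ forces
\[
x_{i+1} = (x_i \setminus \{g_i\}) \cup \{<k_i>\}
\]
for some $g_i \in x_i$ with $g_i \neq <k_i>$, so $<k_i> \in x_{i+1}$. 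Since $<k_{i+1}> \notin x_{i+1}$ (as $x_{i+1} \in S_{k_{i+1}}$), it follows that $k_i \neq k_{i+1}$ for every $i$.

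Next I would prove the auxiliary lemma: \emph{if $\sigma \in S_r'$ and $<s> \in \sigma$ with $s \leq r$, then $\sigma \setminus \{<s>\}$ must have been matched at some step strictly less than $s$.} If instead $\sigma \setminus \{<s>\} \in S_{s-1}'$, then the three conditions $\sigma \setminus \{<s>\} \in S_{s-1}'$, $<s> \notin \sigma \setminus \{<s>\}$, and $(\sigma \setminus \{<s>\}) \cup <s> = \sigma \in S_{s-1}'$ together place $\sigma \setminus \{<s>\}$ in $S_s$, whence $\sigma \in \mu_s(S_s)$---contradicting $\sigma \in S_r' \subseteq S_s'$. Applying this lemma to $\sigma = x_{i+1}$, $r = k_{i+1} - 1$, $s = k_i$---legitimate precisely when $k_{i+1} > k_i$---shows that the face $x_i \setminus \{g_i\} = x_{i+1} \setminus \{<k_i>\}$ was matched at some step strictly less than $k_i$.

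It remains to upgrade this one-step descent into the strict monotonicity $k_{i+1} < k_i$ for every $i$, after which the cycle forces the impossible chain $k_1 > k_2 > \cdots > k_t > k_1$. This is the main obstacle: the auxiliary lemma controls $x_i \setminus \{g_i\}$ but not $x_i$ itself, so ruling out the case $k_{i+1} > k_i$ requires chasing the $\mu$-partner of $x_i \setminus \{g_i\}$ through the earlier matching steps and arguing that the outcome is incompatible with $x_i \in S_{k_i - 1}'$. The cleanest route I foresee is strong induction on $\min_i k_i$ inside the residual poset $S_{\min_i k_i - 1}'$, which itself is the setting of the same construction restricted to the remaining pivots, aided by the observation---forced by the closure of the sets $C(x_i) = \{j : <j> \in x_i\}$ around the cycle---that each $g_i$ is in fact a constant map $<j_i>$, with the multisets $\{j_i\}_i$ and $\{k_i\}_i$ coinciding.
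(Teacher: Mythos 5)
Your setup is correct and matches the paper's: only constant vertices can be removed along the cycle (your closure observation on the sets $C(x_i)$ is the paper's Claim 1), each step has the form $x_{i+1}=(x_i\setminus\{g_i\})\cup\{<k_i>\}$, and hence $k_i\neq k_{i+1}$. Your auxiliary lemma is also correct, and it is essentially the mechanism the paper uses. But the proof is not complete, and the gap you yourself flag is the whole point of the argument. The strict descent $k_{i+1}<k_i$ cannot be extracted from a purely local analysis of one step: when $k_{i+1}>k_i$ your lemma only tells you that the facet $x_i\setminus\{g_i\}=x_{i+1}\setminus\{<k_i>\}$ was matched before step $k_i$, and nothing prevents a facet of $x_i$ from being matched earlier than $x_i$ itself, so no contradiction with $x_i\in S_{k_i}$ follows. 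The proposed "strong induction on $\min_i k_i$" is not carried out, and as stated it is not clear what the induction hypothesis would be.

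The paper closes the argument globally rather than step by step: set $k=\min_i k_i$ and assume $x_1\in S_k$, so $<k>\ \in x_2$. Follow the vertex $<k>$ around the cycle. If $<k>\ \in x_j$ for all $j\geq 2$, then since $<k>\ \notin x_1$ the last step must remove it, giving $x_1=\mu(x_t)\setminus\{<k>\}$ and hence $\mu(x_1)=x_1\,\cup<k>\ =\mu(x_t)$ with $x_1\neq x_t$, contradicting injectivity of $\mu$. Otherwise let $l$ be the first index with $<k>\ \notin x_l$; then $x_l=\mu(x_{l-1})\setminus\{<k>\}$, and by minimality of $k$ both $x_l$ and $\mu(x_{l-1})$ lie in $S_{k-1}'$. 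Your auxiliary lemma's reasoning (applied with $s=k$ to $\mu(x_{l-1})=x_l\,\cup<k>$) then forces $x_l\in S_k$, so $\mu(x_l)=x_l\,\cup<k>\ =\mu(x_{l-1})$ with $x_l\neq x_{l-1}$, again contradicting injectivity. So you have the right ingredient, but it must be applied at the global minimum of the pivots, with the contradiction coming from injectivity of $\mu$ rather than from a monotonicity chain.
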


\begin{proof}

Assume that $\mu$ is not an acyclic matching, {\it i.e.}, there exist distinct simplices $\sigma_1, \ldots,$ $  \sigma_t$ in $ S$ 
such that $\mu(\sigma_i) \succ  \sigma_{i+1  (\text{mod} \ t)}, 1 \leq i \leq t$.

\begin{claim}\label{cl1} $\sigma_{i+1  (\text{mod} \ t)} = \mu(\sigma_i) \setminus \{f\}$, where $|\text{Im}\,f|=1$.

\end{claim}

$\mu(\sigma_i) \succ  \sigma_{i+1  (\text{mod} \ t)}$ implies that 
 $\sigma_{i+1} = \mu(\sigma_i) \setminus \{f\}$, for some $f\in V(G)$. If $|\text{Im}\,f|=n$, then $f \notin \sigma_{i+1 (\text{mod} \ t)} \Rightarrow 
 f \notin \mu(\sigma_{i+1 (\text{mod} \ t)})\Rightarrow f \notin \sigma_{i+2 (\text{mod} \ t)}$. Hence, $f \notin \mu(\sigma_i)$, a contradiction. So $f = \ <j>$ for some $j \in \{1, \ldots, m\}$. 
 
 Let $k \in \{1, \ldots, m\}$ be the least integer such that $\{\sigma_1, \ldots, \sigma_t\} \cap S_k \neq \emptyset$.
 Without loss of generality assume that $\sigma_1 \in S_k$, {\it i.e.} $<k> \ \notin \sigma_1$ and $\mu(\sigma_1) 
 = \sigma_1 \cup <k>$. Since $\sigma_{1} \neq \sigma_{2}$, from Claim \ref{cl1}, 
 $\sigma_2 = \mu(\sigma_1) \setminus \{<i>\}$,  $i \neq k$. Therefore, $<k>  \in \sigma_2$.
 If $<k> \ \in \sigma_i, 2 \leq i \leq t,$ then $<k> \ \in \mu(\sigma_t)$. 
$\mu(\sigma_1) \setminus \{<k>\}  = \sigma_1 = \mu(\sigma_t) \setminus \{<k>\} \Rightarrow \mu(\sigma_1) = \mu(\sigma_t)$, 
 a contradiction  (since $\sigma_1 \neq \sigma_t$ and $\mu$ is injective).

 Let $l \in [m]$ be such that $<k> \ \notin \sigma_l$ and  $<k> \ \in \sigma_j, 2\leq  j <l$.   Here,  $<k> \ \in
 \mu(\sigma_{l-1})$ and therefore, $\sigma_l = \mu(\sigma_{l-1}) \setminus \{<k>\}$. 
Since $\sigma_l,\mu( \sigma_{l-1}) \notin S_j, \mu_{j}(S_j)$ $\forall\, j <k$, we see that  $\sigma_l, \mu(\sigma_{l-1}) \in S_{k-1}'$, 
which implies that $\sigma_{l} \in S_{k}$ (since $\mu(\sigma_{l-1})= \sigma_l \cup <k>$).
Here, $\sigma_{l} \neq \sigma_{l-1}$ and 
 $\mu(\sigma_l) = \mu(\sigma_{l-1})$, a contradiction. 
 Thus our assumption that $\mu(\sigma_{i}) \succ \sigma_{i+1 (\text{mod} \ t)}, 1 \leq i \leq t$ is incorrect {\it i.e.}
 there exists no such cycle in $P$. Hence, $\mu$ is an acyclic matching.
 
 \end{proof}

 \subsection{Critical Cells}
 A subset $\sigma$ of $V(G)$ is a simplex in $\N(G)$ if and only if 
 $A_{i}^{\sigma} \neq \emptyset \ \forall \ i \in [n]$.
 In this section, we characterize the critical cells of $\N(G)$. If there exists $z_1 \ldots z_n \in N(\sigma)$ such that
 $1 \notin \{z_1, \ldots, z_n\}$, then $\sigma \, \cup <1> \ \in \N(G)$, which implies that 
 $\sigma$ is not a critical cell. Further,
 if $\sigma \in N(<x>)$ for some $x \neq 1$, then $<1> \ \in N(\sigma)$ and $\sigma  \in S_1$. Therefore, for $\sigma$ to be a critical 
 cell, either $N(\sigma) = \ <1>$ or $\sigma \subset N(z_1 \ldots z_n)$ with $1\in \{z_1,z_2,\dots z_{n}\}$.
 
 \noindent{\bf{Case 1.}}  $\sigma \subset N(z_1 \ldots z_n)$, where $z_{k}=1$ for some $k \in \{1,2,\dots,n \}$.
 
 In this case, clearly, $\sigma = \{<x_1>, \ldots, <x_l>\}\ \cup \tau$, where $\tau = \{f_1, \ldots, f_q\}$ with 
 $|\text{Im}\, f_i| = n $ $\forall\ i \in \{1, \dots, q\}$ and $x_1, \dots x_l \in [m] \setminus \{z_1, \ldots, z_n\}$.
 Since $\sigma \sim z_1 \ldots z_n$ and $<z_i>$ is not a neighbor of $z_1 \ldots z_n$ for all
 $i \in [n]$ we see that $<z_i> \ \notin \sigma$. If $z_r \in X_{\sigma}$ for some $r \in [n]$, 
 then 
$z_r \in X_{\tau}$, which implies that there exists $f \in \tau$ such that $z_r \in \text{Im} \,f$. Since
$f \sim z_1 \ldots z_n$, $f(i) \neq z_r \ \forall \ i \neq r$, which implies that  $f(r) = z_r$. 

\begin{prop} \label{prop1}
For  $x\in [m]$ with $x \neq z_{i}$, $\forall ~i \in [n]$ and $<x> \ \notin \sigma$, define
\[
  a=\left\{\def\arraystretch{1.2}%
  \begin{array}{@{}c@{\quad}l@{}}
    \text{min} \{x,z_r\}& \text{if} ~~x \in X_r^{\tau} \setminus \bigcup \limits_{r \neq j} X_{j}^{\tau} \\
    x & \text{if} ~~x \in X_i^{\tau} \cap X_{j}^{\tau}, i \neq j.\\
  \end{array}\right.
\]
 Let $\eta$  be the set $\{\sigma \, \cup <a_1>, \ldots, <a_s> \} \setminus \{ <b_1>, \ldots, <b_t>\}, s, t \geq 0, a_i, b_j
 < a, 1 \leq i \leq s, 1 \leq j \leq t$. Then
  \begin{itemize}
   \item [(i)] $\eta \in \N(G) \Longleftrightarrow \eta \, \cup <a> \ \in \N(G).$
   \item[(ii)] $\eta \in S_1 \Longleftrightarrow \eta \, \cup <a> \ \in S_1.$
  \end{itemize}
\end{prop}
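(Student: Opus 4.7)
The plan is to argue by constructing common neighbors explicitly, using the minimality built into the definition of $a$.

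The direction $\Leftarrow$ of (i) is immediate: every common neighbor of $\eta \cup <a>$ is also a common neighbor of the subsimplex $\eta$. The direction $\Leftarrow$ of (ii) is equally easy once one notes that $<1> \notin \eta \cup <a>$ forces $<1> \notin \eta$ whenever $a \neq 1$; the degenerate case $a = 1$ can only arise in Case A with $z_r = 1$, and the constraint $a_i, b_j < 1$ then forces $s = t = 0$ and $\eta = \sigma$, so this case can be settled by direct inspection. Thus the substance lies in the forward implications.

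For (i) $\Rightarrow$, let $g \in V(G)$ be a common neighbor of $\eta$. If $g$ is already adjacent to $<a>$ we are done. Otherwise $g = y_1 \cdots y_n$ is an injective string with $g(k) = a$ for some coordinate $k$, and I would construct a modified common neighbor $g'$ with $a \notin \text{Im}\, g'$. In Case A, where $x$ lies in exactly one coordinate class $X_r^\tau$ and $a = \min\{x, z_r\}$, the ``reserve'' value $\{x, z_r\} \setminus \{a\}$ is available for use at coordinate $k$ or $r$; I would split into the two subcases $a = x$ and $a = z_r$ and verify that the resulting swap respects each adjacency constraint imposed by $\tau$ and by the constants of $\eta$, with the bound $a_i, b_j < a$ guaranteeing the necessary slack. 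In Case B, where $a = x$ with $x \in X_i^\tau \cap X_j^\tau$, the constraints from the two elements of $\tau$ contributing $x$ at positions $i$ and $j$ already force $g(i) \neq x$ and $g(j) \neq x$, so $k \notin \{i, j\}$, and a parallel swap argument produces the required $g'$.

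For (ii) $\Rightarrow$, assume $\eta \in S_1$, that is $<1> \notin \eta$ and $\eta \cup <1> \in \N(G)$. When $a > 1$ one has $<1> \notin \eta \cup <a>$, and it remains to show $\eta \cup <1> \cup <a> \in \N(G)$. But $\eta' := \eta \cup <1>$ is again of the form permitted by the proposition (adjoining $<1>$ with $1 < a$ fits the allowed list $\{a_1, \ldots, a_s\}$), so part (i) applied to $\eta'$ yields the conclusion. The case $a = 1$ is again handled directly.

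The main obstacle is the modification step in the forward direction of (i): one must verify that the set of values forbidden at the swapped coordinate never exhausts $[m]$. This is exactly where the minimality in the definition $a = \min\{x, z_r\}$ (or $a = x$ in Case B) and the constraint $a_i, b_j < a$ conspire with $m \geq n + 1$ to provide a valid replacement value. I expect that organizing and checking the subcases within Case A will be the most technical part of the argument.
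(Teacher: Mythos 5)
Your handling of Case A and of the two easy reductions is essentially the paper's own argument. The trivial $\Leftarrow$ directions, and the deduction of (ii) from (i) by applying (i) to $\eta \, \cup <1>$ (legitimate, since $1 < a$ keeps $\eta \, \cup <1>$ of the admissible form), match what the paper does; and in Case A the paper performs exactly the swap you describe, replacing the offending coordinate of a non-constant common neighbor by $x_r = \max\{x, z_r\}$. The two checks you defer are: (a) the offending coordinate can only be $r$ itself, because $a \in X_r^{\eta} \setminus \bigcup_{j \neq r} X_j^{\eta}$ forces $g(i) \neq a$ for $i \neq r$; and (b) the replacement value is legal because $x_r$ also lies only in $X_r^{\eta}$ and $<x_r> \ \notin \eta$ (as $<x>, <z_r> \ \notin \sigma$ and every added constant $a_i < a \leq x_r$). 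Once written out, your Case A closes.

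Case B is where the proposal has a genuine gap. Adjacency in $K_m^{K_n}$ runs the other way from how you use it: a witness $f \in \tau$ with $f(i) = x$ forces a common neighbor $g$ to satisfy $g(t) \neq x$ for all $t \neq i$, not merely $g(i) \neq x$. Combining the witnesses at positions $i$ and $j$ therefore gives $g(t) \neq x$ for \emph{every} $t \in [n]$, i.e.\ $a = x \notin A_t^{\eta}$ for all $t$, so $N(\eta) = N(\eta \, \cup <a>)$ and no modification is ever needed --- this is precisely the paper's Case (ii). Your weaker conclusion ``$k \notin \{i,j\}$'' leaves open a branch in which some neighbor takes the value $a$ at a third coordinate, and there your ``parallel swap'' has no candidate replacement: unlike Case A there is no reserve element of $X_k^{\tau}$ paired with $a$, so that branch cannot be completed as described and is rescued only by being vacuous, which you have not shown. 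Separately, your dismissal of the degenerate case $a = 1$ is too quick: that case forces $r = k$, $z_r = 1$ and $\eta = \sigma$, and direct inspection shows (ii) is then \emph{false} ($\sigma \in S_1$ via the neighbor $z_1 \ldots z_{k-1} x z_{k+1} \ldots z_n$, while $\sigma \, \cup <1>$ can never lie in $S_1$); the proposition is only ever invoked after the possibility $x \in X_k^{\tau} \setminus \bigcup_{j \neq k} X_j^{\tau}$ has been excluded, so $a > 1$ throughout the paper.
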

\begin{proof}
 If $\eta \, \cup <a> \ \in \N(G) \ \text{or} \ S_1$, then $\eta \in \N(G) \ \text{or} \ S_1$, respectively.\\
 \textbf{Case (i)} $a = \text{min}\{x, z_r\}$.\\
 Since $ a \leq x, z_r \in X_r^{\tau} \setminus \bigcup \limits_{r \neq j} X_{j}^{\tau}$ and 
 $<x>, <z_r> \ \notin \sigma$, we observe that $x, z_r \in X_r^{\eta} \setminus \bigcup \limits_{r \neq j} X_{j}^{\eta}$.
 Any neighbor $f$ of $\eta$ has the property that $f(i) \neq x, z_r$ ({\it i.e.} $f(i) \neq a$)
 if $i \neq r$. If $<y> \ \sim \eta,$ {\it i.e.} 
 $\eta \in \N(G)$, then $y \neq a$ and  $<y> \ \sim \ <a>$ implying that $<y> \ \sim \eta \, \cup <a>$ and thus 
 $\eta \, \cup <a> \ \in \N(G)$. \\
 If $\eta \in S_1$ and $<y> \ \sim \eta \, \cup <1> $, then  $<y> \ \sim \ <a>$ and $<1>$. Therefore 
 $\eta \, \cup <a> \cup <1> \ \in \N(G)$ and  $\eta \, \cup <a> \ \in S_1.$\\
 Let $x_r = \text{max}\{x, z_r\}$. Clearly, $a, x_r \in X_r^{\tau} \setminus \bigcup \limits_{i \neq r} X_i^{\tau}$.
 For any neighbor $y = y_1 \ldots y_n$ of $\eta$, $y_i \neq x_r, a \ \forall \ i \neq r$ and so 
 $y_1 \ldots y_{r-1} x_r y_{r+1} \ldots y_n \sim \eta$ and $<a>$. Therefore $\eta \, \cup <a> \ \in \N(G)$ if $\eta \in \N(G)$.\\
 If $\eta \in S_1$ and $y \sim \eta$, then $y_i \neq 1 \ \forall \ i \in [n]$.
 Since $x_r \neq 1, y_1 \ldots y_{r-1} x_r y_{r+1}$ $ \ldots y_n \sim \eta, <a>$ and $<1>$. Thus, $\eta \, \cup <a> \ \in S_1$. 
 
 \noindent{\bf{Case (ii)}} $x = a \in X_i^{\tau} \cap X_{j}^{\tau}, i \neq j$.\\
 In this case, $a \neq z_t$ and $a \notin A_t^{\eta} \ \forall \ t \in [n]$. Since $A_{t}^{\eta \cup <a>} =
 A_t^{\eta} \setminus \{a\}$
and $a \notin A_t^{\eta}$, we get $A_t^{\eta \cup <a>} = A_t^{\eta}$. If $f \in N(\eta)$, then 
$ \forall \ t \in [n]$, $f(t) \in A_t^{\eta} = A_t^{\eta \cup <a>}$ implying that $f \in N(\eta \, \cup <a>)$. 
$N(\eta \, \cup <a>) \subset N(\eta)$ always, and therefore, $N(\eta) = N(\eta \, \cup <a>)$. $(i)$ and $(ii)$ now follow.
\end{proof}

 \begin{prop} \label{propmain}
 Let $a$ be as defined in Proposition \ref{prop1},  $\sigma \notin S_t \cup \mu_t(S_t) $ $  \forall \ t < a$ and 
 $\sigma \, \cup <a> \ \in S_{i_1} \cup \mu_{i_1}(S_{i_1})$,  $i_1 < a$. 
Then,  there exists $s \in \{2, \dots, m\}$ and a cell 
 \begin{center}
   $\xi_s = \sigma \cup\begin{cases}  \{<a>, <b_1>, \ldots, <b_r>\}
 \setminus \{<l_1>, \ldots, <l_t>\} & \text{if} ~ \text{s}~ \text{is} ~  \text{even} \\
               \{<b_1>, \ldots, <b_r>\}
 \setminus \{<l_1>, \ldots, <l_t>\}& \text{if} ~ \text{s} ~ \text{is} ~  \text{odd},
                       \end{cases}$
 \end{center}
 
 where $r, t \geq 0$ and $b_1, \ldots, b_r, l_1, \ldots, l_t < a$ such that  $\xi_{s} \in S_{1}\cup \mu_{1}(S_1)$ but
 $\xi_s \setminus \{<a>\}$ (if  $<a> \ \in \xi_s$) or $ \xi_s \, \cup <a>$ ( if $<a> \ \notin \xi_s$)
 $\notin S_1 \cup \mu_1(S_1)$.
 \end{prop}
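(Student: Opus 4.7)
The plan is to construct $\xi_s$ iteratively by tracing an alternating sequence derived from the matching $\mu$, while toggling the presence of $<a>$ for parity. Begin by setting $\xi_2 := \sigma \, \cup <a>$; by hypothesis $\xi_2 \in S_{i_1} \cup \mu_{i_1}(S_{i_1})$ for some $i_1 < a$, so the matched partner of $\xi_2$ differs from it by exactly one constant vertex $<i_1>$ (either adjoined or deleted, depending on which of the two classes $\xi_2$ belongs to). Remove $<a>$ from this partner to define $\xi_3$; Proposition \ref{prop1}(i) ensures $\xi_3 \in \N(G)$, and the resulting modification of $\sigma$ continues to involve only constants of index strictly less than $a$.

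Iterate: given $\xi_j \notin S_1 \cup \mu_1(S_1)$, identify its matching stage $i_{j-1}$, take the matched partner (differing by $<i_{j-1}>$), and toggle $<a>$ to produce $\xi_{j+1}$. Two invariants must be maintained. Invariant (a): each $\xi_j$ has the canonical form asserted in the statement, namely $\sigma$ with some constants $<b_k>$ added and some $<l_k>$ removed, all of value less than $a$, together with $<a>$ present iff $j$ is even. Invariant (b): the matching stages $i_1, i_2, \ldots$ are strictly decreasing. Invariant (a) is transparent from the update rule. Invariant (b) is the decisive point and follows from the acyclicity of $\mu$ proved immediately above: were $\xi_{j+1}$ matched at some stage $\geq i_{j-1}$, splicing the matching relations and intermediate covering relations accumulated so far would exhibit an actual cycle in $\mu$, a contradiction.

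Since the stages decrease strictly and remain $\geq 1$, the procedure terminates at some $s \in \{2, \ldots, m\}$ with $\xi_s \in S_1 \cup \mu_1(S_1)$. For the conjugate claim, observe that the $<a>$-toggle of $\xi_s$ coincides with the matched partner $\xi_{s-1}'$ produced at the previous stage $i_{s-2}$, which by the minimality of $s$ satisfies $i_{s-2} \geq 2$. By the filtration structure of $S_1', S_2', \ldots$, the set $S_{i_{s-2}} \cup \mu_{i_{s-2}}(S_{i_{s-2}})$ is disjoint from $S_1 \cup \mu_1(S_1)$, so the toggled cell is not in $S_1 \cup \mu_1(S_1)$. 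The hypothesis $\sigma \notin S_t \cup \mu_t(S_t)$ for all $t < a$ anchors the base case $s = 2$, where the toggle reduces to $\sigma$ itself.

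The principal obstacle is proving the strict decrease of the stages $i_j$ rigorously. This requires a careful case split on whether each $\xi_j$ lies in $S_{i_{j-1}}$ or in $\mu_{i_{j-1}}(S_{i_{j-1}})$, followed by a clean translation of any failure of decrease into an explicit cycle in $\mu$. A subsidiary difficulty is bookkeeping the constants added versus removed, so that the canonical form of $\xi_s$ is realized without accidental collisions (for instance, attempting to remove a $<c>$ not present in the current cell, or reintroducing one just deleted), which is again controlled by invariant (b) via acyclicity.
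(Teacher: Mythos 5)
Your skeleton coincides with the paper's: alternately toggle $<a>$ and a constant $<i_s>$ with $i_s<a$, show the stages $i_1>i_2>\cdots$ strictly decrease, stop when a stage reaches $1$, and get the conjugate claim because the $<a>$-toggle of the terminal cell sits at the previous stage $\geq 2$. But the decisive step --- the strict decrease --- is exactly where your argument fails, and you concede you have not carried it out. Acyclicity of $\mu$ is not the mechanism and cannot be made into one here: a forbidden cycle for an acyclic matching is an alternating sequence $f(x_1)\succ x_1\prec f(x_2)\succ x_2\prec\cdots$ in which consecutive cells are related by covering relations of the face poset, whereas in your construction $\xi_{j+1}$ is obtained from the matched partner of $\xi_j$ by toggling $<a>$, a move that goes \emph{up} (adjoining $<a>$) on every other step; consecutive $\xi_j$'s differ by two vertices ($<a>$ and $<i_j>$) and are not faces of one another. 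There is no V-path to splice, so a failure of decrease does not produce a cycle in $\mu$.

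What actually forces $i_{j+1}<i_j$ is the greedy filtration defining the $S_i$'s, together with Proposition \ref{prop1} and the hypothesis on $\sigma$. Every cell lies in at most one $S_i\cup\mu_i(S_i)$. If the new cell $\xi_j$ were not captured at any stage $<i_j$, it would survive into $S_{i_j-1}'$; its $<i_j>$-partner is the $<a>$-toggle of $\xi_{j-1}$, which is already known to lie in $S_{i_{j-1}}\cup\mu_{i_{j-1}}(S_{i_{j-1}})$ with $i_{j-1}>i_j$ and hence also survives into $S_{i_j-1}'$, and Proposition \ref{prop1}(i) guarantees the union of this pair is a simplex. The greedy definition of $S_{i_j}$ would then match the pair at stage $i_j$, placing the $<a>$-toggle of $\xi_{j-1}$ in $S_{i_j}\cup\mu_{i_j}(S_{i_j})$ and contradicting the uniqueness of its stage; at the first step the same argument instead places $\sigma$ itself in $S_{i_1}\cup\mu_{i_1}(S_{i_1})$, contradicting $\sigma\notin S_t\cup\mu_t(S_t)$ for $t<a$. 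This dichotomy is also what rules out $\xi_j$ being critical, a possibility your termination claim silently ignores. Without this replacement for the acyclicity step, the proof does not close.
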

\begin{proof} 
Clearly, $i_1 > 1$ as  $\sigma \, \cup <a> \ \in S_{1} \cup \mu_1(S_1)$ implies that $\sigma \in S_1 \cup \mu_1(S_1)$, a contradiction.
Define the cell
\begin{center}
   $\xi_1 = \begin{cases}
             \sigma \setminus \{<i_1>\} & \text{if} ~~ <i_1> \ \in \sigma\\
              \sigma \, \cup <i_1> & \text{if} ~~ <i_1> \ \notin \sigma.\\
                       \end{cases}$
 \end{center}
Since,  $\sigma \ \cup <a> \ \in S_{i_1} \cup \mu_{i_1}(S_{i_1})$, $\xi_{1} \ \cup <a>$ is always a simplex, thereby showing that $\xi_{1} \in \N (G)$.
 If  $\xi_1 \in S_{i_1} \cup \mu_{i_1}(S_{i_1})$, {\it i.e.} $\xi_{1} \notin S_{j} \cup \mu_{j}(S_{j})$ $\forall \ j <i_1$, then $\sigma$ will belong to $S_{i_1} \, \cup \mu_{i_1}(S_{i_1})$, which contradicts the hypothesis. Therefore, $\exists \ i_2 < i_1$ such that 
 $\xi_1 \in S_{i_2} \cup \mu_{i_2}(S_{i_2})$. By Proposition \ref{prop1}(i), $\xi_1 \, \cup <a>$, which is $\{\sigma \, \cup <a>\} \setminus \{<i_1>\}$
 or $\sigma \, \cup <a> \cup <i_1>$, belongs to $S_{i_1} \cup \mu_{i_1}(S_{i_1}).$ So, if $i_2 =1$ the result holds. 
 Let $i_2 > 1$.
 Define the cell 
 \begin{center}
   $\xi_2 = \begin{cases}
              \{\xi_1 \, \cup <a>\} \setminus \{<i_2>\} & \text{if} ~~ <i_2> \ \in \sigma\\
              \xi_1 \, \cup <a> \cup <i_2> & \text{if} ~~ <i_2> \ \notin \sigma.
                       \end{cases}$
 \end{center}

 By  Proposition \ref{prop1}, $\xi_2 \in \N(G)$. Since 
 $\xi_1 \, \cup <a> \ \notin S_{i_2} \cup \mu_{i_2}(S_{i_2})$, there exists $i_3 < i_2$ such that
 $\xi_2 \in S_{i_3}\cup \mu_{i_3}(S_{i_3})$. Here, $\xi_2 \setminus \{<a>\}$ which is $\xi_1 \setminus \{<i_2>\}$ or
 $\xi_1 \, \cup <i_2> \ \in S_{i_2} \cup \mu_{i_2}(S_{i_2})$, which implies that the result holds if $i_3 = 1$.
 Inductively, assume that there exists $l, 1  <l < m$, where $ i_{l+1} < i_{l} < \ldots < i_1 < a$ and 
 $\xi_{t} \in S_{i_{t+1}}\cup \mu_{i_{t+1}}(S_{i_{t+1}})$, $1 \leq t \leq l$  such that
\begin{center}
   $\xi_{t} = \begin{cases}
              \{\xi_{t-1} \, \cup <a>\} \setminus \{<i_{t}>\} & \text{if} ~ <i_{t}> \ \in \xi_{t-1},~ t~\text{is}~\text{even}   \\
              \xi_{t-1} \, \cup <a> \cup <i_{t}> & \text{if} ~~ <i_{t}> \ \notin \xi_{t-1},~ t~\text{is}~\text{even}\\
                                     \xi_{t-1} \setminus \{<a>,<i_{t}>\} & \text{if} ~~ <i_{t}> \ \in \xi_{t-1},~ t~\text{is}~\text{odd}\\
              \{\xi_{t-1} \, \cup <i_{t}>\} \setminus \{<a>\} & \text{if} ~~ <i_{t}> \ \notin \xi_{t-1},~ t~\text{is}~\text{odd}.
                       \end{cases}$
 \end{center}

Since $\xi_{l} \setminus \{<a>\} ( \text{or} ~~ \xi_{l} \cup <a>) =  \xi_{l-1} \setminus \{<i_l>\}$ or 
 $\xi_{l-1} \, \cup <i_l> \ \in S_{i_l} \cup \mu_{i_l}(S_{i_l})$ and $i_l > i_{l+1}$, the result holds if $i_{l+1}=1$. If $i_{l+1} >1$, by induction
 the result follows.
 
 \end{proof}

We first get some necessary conditions for $\sigma$ to be  a critical cell.
\begin{lem} \label{theorem1} Let $\sigma$ be a critical cell. Then
 \begin{itemize}
  \item [(i)] $X_{\sigma} = [m] \ \text{or} \ [m] \setminus \{1\}.$
  \item [(ii)] $x \in [m] \setminus \{z_1, \ldots, z_n\} \Rightarrow \ <x> \ \in \sigma$.
 \end{itemize}
 \end{lem}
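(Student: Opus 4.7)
The plan is to prove both (i) and (ii) by contrapositive: assuming $\sigma$ is critical, any violation of (i) or (ii) will force $\sigma$ into some $S_i \cup \mu_i(S_i)$, contradicting criticality. As a preliminary observation, since we are in Case 1, $1 = z_k$ for some $k$, so $<1> = <z_k> \notin \sigma$; and for $\sigma$ to avoid being matched at step $1$, $\sigma \cup <1>$ must fail to be a simplex, i.e., $<1>$ is not a common neighbor, which is equivalent to $1 \in X_\sigma$.

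For (ii), suppose for contradiction that some $x \in [m] \setminus \{z_1,\ldots,z_n\}$ satisfies $<x> \notin \sigma$ and take $x$ minimal (so $x \geq 2$). \textit{Subcase (a): $x \notin X_\sigma$.} Then $<x>$ is a common neighbor of $\sigma$, so $\sigma \cup <x> \in \N(G)$. Since $x \notin X_\sigma$ and $x \neq j$ for every $j < x$, the condition for $\sigma$ (resp.\ $\sigma \cup <x>$) to lie in $S_j \cup \mu_j(S_j)$ depends on $X_\sigma$ (resp.\ $X_\sigma \cup \{x\}$) only at the index $j$, and the two conditions agree. An induction on $j$ shows that $\sigma$ and $\sigma \cup <x>$ are matched in parallel through step $x-1$. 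Since $\sigma$ is critical, $\sigma \cup <x> \in S'_{x-1}$, forcing $\sigma \in S_x$ --- a contradiction. \textit{Subcase (b): $x \in X_\sigma$.} Then $x \in X_\tau$. Let $a$ be as defined in Proposition \ref{prop1}; so $a \leq x$ and $<a> \notin \sigma$ (by Case 1 if $a = z_r$; by hypothesis if $a = x$). Proposition \ref{prop1}(i) applied to $\eta = \sigma$ gives $\sigma \cup <a> \in \N(G)$. If $\sigma \cup <a>$ is unmatched through step $a-1$, then $\sigma \in S_a$, a contradiction. Otherwise Proposition \ref{propmain} produces a cell $\xi_s \in S_1 \cup \mu_1(S_1)$ whose $<a>$-toggle lies outside $S_1 \cup \mu_1(S_1)$; but $\xi_s$ has the form required by Proposition \ref{prop1}, and the $S_1 \cup \mu_1(S_1)$-version of part (ii) --- obtained by applying part (i) to toggle $<1>$ in or out of $\xi_s$ --- asserts the opposite equivalence, a contradiction. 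Minimality of $x$ guarantees that the constants $<b_j>$ and $<l_j>$ appearing in $\xi_s$ carry the required indices $< a$.

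For (i), having established (ii), $X_\sigma \supseteq [m] \setminus \{z_1,\ldots,z_n\}$; combined with $1 \in X_\sigma$, this gives $X_\sigma \supseteq [m] \setminus (\{z_1,\ldots,z_n\} \setminus \{1\})$. If some $z_j \neq 1$ is absent from $X_\sigma$, then $z_j \notin X_\tau$, so $<z_j>$ is a common neighbor of $\sigma$, and Subcase (a) above applied with $z_j$ in place of $x$ yields $\sigma \in S_{z_j}$ --- a contradiction. Hence $X_\sigma = [m]$ in Case 1. The alternative $X_\sigma = [m] \setminus \{1\}$ corresponds to Case 2 (where $N(\sigma) = \{<1>\}$ and $1 \notin X_\sigma$).

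The main technical obstacle lies in Subcase (b): verifying that each intermediate cell $\xi_t$ arising in Proposition \ref{propmain}'s recursive construction continues to satisfy Proposition \ref{prop1}'s form hypothesis (a modification of $\sigma$ by constants with index $< a$), so that the $<a>$-toggle equivalence can actually be invoked to derive the contradiction. The minimality of $x$ and the strict index control built into the $\xi_t$ are the key ingredients that make this work.
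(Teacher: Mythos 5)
Your subcase (b) of part (ii) --- defining $a$ as in Proposition \ref{prop1}, forcing $\sigma \, \cup <a>$ into some $S_{i_1}\cup\mu_{i_1}(S_{i_1})$ with $i_1<a$, and playing Proposition \ref{propmain} off against Proposition \ref{prop1} --- is essentially the paper's argument (the paper only additionally disposes of the case $x\in X_k^{\tau}\setminus\bigcup_{j\neq k}X_j^{\tau}$, where $a$ would degenerate to $1$, by exhibiting the explicit common neighbor $z_1\ldots z_{k-1}xz_{k+1}\ldots z_n$ and landing in $S_1$). The rest of your argument has two genuine problems.

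First, the preliminary claim that criticality forces $1\in X_{\sigma}$ is false. The condition $\sigma\notin S_1$ says $N(\sigma)\cap N(<1>)=\emptyset$, i.e.\ every common neighbor of $\sigma$ carries $1$ in its image; this is a condition on $N(\sigma)$, not on $X_{\sigma}$. The paper's Lemma \ref{lemma5}(iii) and Lemma \ref{lemma6} (case (b)) are built precisely around Case 1 critical cells with $X_{\sigma}=[m]\setminus\{1\}$, where $A_k^{\sigma}=\{1\}$ kills $\sigma \, \cup <1>$ even though $1\notin X_{\sigma}$. So your conclusion that $X_{\sigma}=[m]$ whenever $\sigma$ is a Case 1 critical cell is wrong. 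Only the disjunction in (i) is true, and your own argument already delivers it once the false premise is dropped: (ii) plus the $z_j\neq 1$ step give $[m]\setminus\{1\}\subseteq X_{\sigma}$, which is all the lemma asserts.

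Second, the ``parallel matching through step $x-1$'' in subcase (a) is not justified. Whether $\sigma \, \cup <x>\cup<j>$ is a simplex is a statement about common neighbors, not about ``$X_{\sigma}$ at index $j$'': a common neighbor of $\sigma \, \cup <j>$ may carry $x$ in its image, and you would also need to rule out $\sigma \, \cup <x>$ being matched upward as $\mu_j$ of one of its facets for some $j<x$. None of this machinery is needed: if $x\neq1$ and $x\notin X_{\sigma}$, then $<x>$ is adjacent to every element of $\sigma$ and to $<1>$, so $<x>\ \in N(\sigma \, \cup <1>)$ and $\sigma\in S_1$ at once. This one-line observation is the paper's entire proof of (i), and it also replaces your appeal to ``Subcase (a)'' inside the proof of (i).
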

 \begin{proof}
 A critical cell $\sigma$ does not belong to $S_i \cup \mu_i(S_i)$ for all $i \in \{1, \ldots, m\}.$
 \begin{itemize}
  \item[(i)] If $x \in [m] \setminus \{1\}$ such that $x \notin X_{\sigma}$,
  then $\sigma, <1> \ \sim \ <x>$,
  thereby implying that $\sigma \in S_1$. Hence $[m] \setminus \{1\} \subset X_{\sigma}$.
  \item[(ii)] Let $x \in [m] \setminus \{z_1, \ldots,z_n\}$ such that
   $<x> \ \notin \sigma$. Since $z_1 \ldots z_n$ is a neighbor of both $<x> $ and $\sigma$, we see that 
   $\sigma \, \cup <x> \ \in \N(G)$. $x \neq 1 \in X_{\sigma}, <x> \ \notin \sigma$ implies that there exists
    $i \in [n]$ such that $x \in X_i^{\tau}$.
   If $x \in X_{k}^{\tau} \setminus \bigcup \limits_{j \neq k} X_{j}^{\tau}$, then 
   $\sigma \sim z_1 \ldots z_{k-1} x z_{k+1} \ldots z_n$, which implies that 
   $\sigma \in S_1$. Thus $x \notin X_k^{\tau} \setminus \bigcup\limits_{j \neq k} X_j^{\tau}$.
      If $a$ is as defined in  Proposition \ref{prop1}, then  $\sigma \, \cup <a> \ \in \N(G)$ and  $\xi_s \in S_1 \Rightarrow \xi_s \, \cup <a>$ or $\xi_s \setminus \{<a>\}  \in S_1$. This 
 contradicts   Proposition \ref{propmain} and thus, 
 $<x> \ \in \sigma$ if $x \in [m] \setminus \{z_1, \ldots, z_n\}$.

\end{itemize}
\end{proof}

 \begin{lem} \label{lemma5}
  Let $\sigma$ be a critical cell. Then
 \begin{itemize}
  \item[(i)] $X_r^{\tau} \cap X_{s}^{\tau} = \emptyset$ if $ r \neq s$.
 \item[(ii)] $x \in X_i^{\tau} \Rightarrow x \geq z_i, i \in \{1, \ldots, n\}$.
\item[(iii)]  $\exists \ a \in X_k^{\tau}$ such that $ a <  \text{min} \{z_1, \ldots, \widehat{z_k}, \ldots,  z_n\}$.
 \end{itemize}
 \end{lem}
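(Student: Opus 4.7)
I will prove (i)--(iii) by contrapositive: assume the stated condition fails and exhibit a pairing of $\sigma$ under $\mu$, contradicting criticality. The main tool is Proposition~\ref{prop1}, which I first strengthen by induction on $j < a$: using that $j \neq a$ and applying Proposition~\ref{prop1} to $\eta$, $\eta \cup <j>$, and $\eta \setminus <1>$ in the obvious ways, one obtains the level-$j$ analogue $\eta \in S_j \cup \mu_j(S_j) \iff \eta \cup <a> \in S_j \cup \mu_j(S_j)$ for every $j < a$.

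For (i), if $x \in X_r^\tau \cap X_s^\tau$ with $r \neq s$, then $x \neq z_\ell$ for any $\ell$ (since each $z_\ell$ can occur only in $X_\ell^\tau$), so $<x> \in \sigma$ by Lemma~\ref{theorem1}(ii). Set $\sigma' := \sigma \setminus \{<x>\}$ and $a := x$; we are in Case (C2) of Proposition~\ref{prop1}, and the strengthened equivalence transfers $\sigma \notin S_j \cup \mu_j(S_j)$ to $\sigma' \notin S_j \cup \mu_j(S_j)$ for every $j < x$. Then $\sigma' \in S_x$ and $\mu_x(\sigma') = \sigma$, a contradiction. Part (ii) runs identically: $x \in X_i^\tau$ with $x < z_i$ still gives $x \neq z_\ell$ for any $\ell$ (because $f \sim z_1 \ldots z_n$ for $f \in \tau$ with $f(i)=x$ excludes $x=z_\ell$ for $\ell \neq i$, while $x<z_i$ excludes $x=z_i$) and $<x> \in \sigma$; by (i), $x$ is unique to $X_i^\tau$; and $a = \min\{x, z_i\} = x$ lands in Case (C1) of Proposition~\ref{prop1}, yielding the same contradiction.

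Part (iii) is the main obstacle. Assume every $a \in X_k^\tau$ satisfies $a \geq z^* := \min\{z_\ell : \ell \neq k\}$. Then $1 \notin X_k^\tau$, and since $z_k=1$ cannot lie in $X_j^\tau$ for $j \neq k$, $1 \notin X_\tau$. Using (i), (ii), and Lemma~\ref{theorem1}, one checks that $z_\ell \in X_\ell^\tau$ for every $\ell \neq k$ (otherwise the constant $<z_\ell>$ would be a common neighbor of $\sigma$ and $<1>$, placing $\sigma$ in $S_1$), so $X_\sigma = [m] \setminus \{1\}$. I will show $\sigma \in S_{z^*}$. Since $1 \notin X_\sigma$ and $z^* \neq 1$, the constant $<1>$ is adjacent to both $\sigma$ and $<z^*>$, so $\sigma \cup <z^*> \in \N(G)$; criticality gives $\sigma \in S_{z^*-1}'$, so the task reduces to $\sigma \cup <z^*> \in S_{z^*-1}'$. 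For the base $j=1$: writing $z^* = z_{r^*}$, the collapse $A_{r^*}^\sigma \setminus \{z^*\} = \{1\}$ forces $h(r^*)=1$ for any would-be common neighbor $h$ of $\sigma \cup <z^*> \cup <1>$, contradicting $h \sim <1>$, and the only candidate constant is ruled out the same way. For $1 < j < z^*$: the hypothesis on $X_k^\tau$ combined with (ii) forces $j \notin X_\tau$, so $j \notin X_{(\sigma \cup <z^*>) \setminus \{<j>\}}$, and $<j>$ itself is a common neighbor of $((\sigma \cup <z^*>) \setminus \{<j>\}) \cup <1>$; this places $(\sigma \cup <z^*>) \setminus \{<j>\}$ in $S_1$ and rules out $\sigma \cup <z^*> \in \mu_j(S_j)$. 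Carrying out this inductive descent cleanly, and in particular verifying the strengthened Proposition~\ref{prop1} that drives it, is where the bulk of the work lies.
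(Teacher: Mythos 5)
Your argument is correct and follows essentially the same route as the paper's: in (i) and (ii) you reduce to showing that $\sigma \setminus \{<x>\}$ would lie in $S_x$ and hence be matched with $\sigma$, and in (iii) you show that if $\min X_k^{\tau} \geq z^* = \min\{z_\ell \mid \ell \neq k\}$ then $\sigma \, \cup <z^*>$ avoids $S_t \cup \mu_t(S_t)$ for all $t < z^*$ (via $A_{r^*}^{\sigma \cup <z^*> \cup <1>} = \emptyset$ for the base and $<j>$ being a free common neighbor for $1 < j < z^*$), forcing $\sigma \in S_{z^*}$ --- which is exactly the paper's Case (iii) argument, with your range $1 < j < z^*$ being empty when $z^* = 2$, the subcase the paper treats separately. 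The one genuine difference is organizational: where the paper invokes the explicit alternating descent of Proposition \ref{propmain} (the chain $\xi_1, \xi_2, \ldots$ terminating in a contradiction with Proposition \ref{prop1}), you instead assert the level-$j$ biconditional $\eta \in S_j \cup \mu_j(S_j) \Leftrightarrow \eta \, \cup <a> \ \in S_j \cup \mu_j(S_j)$ for all $j < a$ and all $\eta$ of the prescribed form. This biconditional is true and your sketched induction does close: the base case follows from Proposition \ref{prop1}(i),(ii) applied to $\eta$ and $\eta \setminus \{<1>\}$, and the step follows because $\eta \in S_j \cup \mu_j(S_j)$ is equivalent to both $\eta \setminus \{<j>\}$ and $(\eta \setminus \{<j>\}) \cup <j>$ lying in $S_{j-1}' = P \setminus \bigcup_{i < j}(S_i \cup \mu_i(S_i))$, a condition preserved under $\cup <a>$ by the inductive hypothesis; all intermediate cells stay of the prescribed form since only constants $<i>$ with $i < a$ are added or removed. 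Your formulation is arguably cleaner than Proposition \ref{propmain} (it is a genuine equivalence rather than a contrapositive descent), but it buys the same thing, and you are right that writing out that induction carefully is where the real work sits.
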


 \begin{proof} \hspace{5cm}

  \begin{itemize}
   \item[(i)] Assume that $x \in X_r^{\tau} \cap X_s^{\tau}.$ 
   For any $f \in \tau, f(i) \neq z_j \ \forall \ i \neq j,  x \in [m] \setminus \{z_1, \ldots, z_n\}$ and $x \neq 1$.
   Therefore, $<x> \ \in \sigma$ by Lemma \ref{theorem1}.
   Since $<x> \ \notin \sigma \setminus \{<x>\}$, Proposition \ref{prop1} holds, when $\sigma$ is replaced by
   $\sigma \setminus \{<x>\}$ in the definition of $\eta$.
   
   Since $\sigma \notin S_i \cup \mu_i(S_i)$ $\forall \ i $, we see that $\sigma \setminus \{<x>\} \notin S_x \cup \mu_x(S_x)$. Therefore there exists $i < x$ such that $\sigma \setminus \{<x>\} \in S_{i} \cup \mu_{i}(S_{i})$.   
   Considering $\sigma \setminus \{<x>\}$ instead of $\sigma \ \cup <a>$ in  Proposition \ref{propmain} and by an argument similar to that in the proof of Proposition \ref{propmain}, we see that
  there exists $s\in\{1, \dots  , m\}$ and a cell $\xi_s$ such that
 \begin{center}
 $\xi_{s} = 
   \sigma  \cup \begin{cases}  \{<b_1>, \ldots, <b_r>\}
 \setminus \{<x>, <l_1>, \ldots, <l_t>\} & \text{if} ~ s~ \text{is} ~  \text{even} \\
               \{<b_1>, \ldots, <b_r>\}
 \setminus \{ <l_1>, \ldots, <l_t>\}& \text{if} ~ s ~ \text{is} ~  \text{odd},
                       \end{cases}$
 \end{center}
 
 where $r, t \geq 0$ and $b_1, \ldots, b_r, l_1, \ldots, l_t < x$ such that,  $\xi_{s} \in S_{1}\cup \mu_{1}(S_1)$ but
 $\xi_s \setminus \{<x>\}$ (if  $<x> \ \in \xi_s$) or $ \xi_s \, \cup <x>$ (if $<x> \ \notin \xi_s$)
 $\notin S_1 \cup \mu_1(S_1)$. But, this  is not possible by  Proposition \ref{prop1}.
  Hence $X_r^{\tau} \cap X_s^{\tau} = \emptyset$.

  \item[(ii)] Let $x \in X_i^{\tau}$ such that $x < z_i$. As in the case $(i)$, $x \in [m] \setminus \{z_1, \ldots, z_n\}$
  and hence $<x> \ \in \sigma$. By exactly the same proof as the one above, we get a contradiction and thus $x \geq z_i$.

  \item[(iii)] Let $z = \text{min} \{z_1, \ldots, \widehat{z_k}, \ldots, z_n\}$ and
  $a = \text{min}\{x \ | \ x \in X_{k}^{\tau}\}$. If $X_{\sigma} = [m],$ then $a = 1$ and the result follows. 
  Let $X_{\sigma} = [m] \setminus \{1\}$ and  $a \geq z$. $\sigma, <z> \ \sim \ <1>$ implies that $\sigma \, \cup <z> \ \in \N(G)$
  and $\sigma$ is a critical cell implies that $\sigma \, \cup <z> \ \notin S_1$. If $z = 2$, then 
  $\sigma, \sigma \, \cup <2> \ \notin S_1$ and hence $\sigma \in S_2$, which is not possible. Therefore, $z > 2$.
  Choose $t$, $1 < t < z$. Here, $t \neq z_i \ \forall \ i$ and therefore $t \in [m] \setminus \{z_1, \ldots, z_n\}$
  and $t \notin X_i^{\tau} \ \forall \ i \in [n]$ (by $(ii)$). Since $t \notin X_{\sigma \setminus <t>}$ and $t \neq 1$,
  $\{\sigma \, \cup <z>\} \setminus \{<t>\} \in S_1$, which implies $\sigma \, \cup <z> \ \notin \mu_t(S_t) \ \forall \ t < z$.
  Therefore, $\sigma \, \cup <z> \ \in \mu_{z}(S_{z})$, which implies that $\sigma$ is not a critical cell. Thus, $a < z$.
  \end{itemize}
 \end{proof}

 We now consider those cells $\sigma$ for which $X_{\sigma} = [m]$ or $[m] \setminus \{1\}$ and 
 $<x> \ \in \sigma$ if $ x \in [m] \setminus \{z_1, \ldots, z_n\}$. We now get sufficient conditions for $\sigma$ to be 
 a critical cell.
 \begin{lem} \label{lemma6} $\sigma$ is a critical cell if it satisfies the following conditions 
 \begin{itemize}
     \item[(i)] $X_r^{\tau} \cap X_{s}^{\tau} = \emptyset$ if $ r \neq s$.
 \item[(ii)] $x \in X_i^{\tau}$ implies $x \geq z_i, i \in \{1, \ldots, n\}$.
\item[(iii)]  $\exists \ a \in X_k^{\tau}$, where $ a <  \text{min} \{z_1, \ldots, \widehat{z_k}, \ldots,  z_n\}$. 
 \end{itemize} 
 \end{lem}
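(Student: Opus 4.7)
The plan is to show that $\sigma \notin S_i \cup \mu_i(S_i)$ for every $i \in [m]$, proceeding by induction on $i$.

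For the base case $i = 1$: the setup of Case~1 (preceding Proposition \ref{prop1}) already forces $<z_j> \notin \sigma$ for every $j \in [n]$, so in particular $<1> = <z_k> \notin \sigma$, which rules out $\sigma \in \mu_1(S_1)$. To rule out $\sigma \in S_1$ I will compute $A_k^\sigma = \{1\}$, giving $A_k^{\sigma \cup <1>} = \emptyset$ and hence $\sigma \cup <1> \notin \N(G)$. The computation combines three inputs: the constants $<x>$ for $x \in [m] \setminus \{z_1, \ldots, z_n\}$ push each such $x$ into every $X_j^\sigma$; for $i \neq k$, $z_i \in X_\sigma$ together with the neighbor condition $f(j) \neq z_i$ for $f \in \tau$, $j \neq i$, forces $z_i \in X_i^\tau \subseteq \bigcup_{j \neq k} X_j^\sigma$; and condition (ii) ensures $1 \notin X_j^\tau$ for $j \neq k$ since $z_j \geq 2$. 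Together these give $\bigcup_{j \neq k} X_j^\sigma = [m] \setminus \{1\}$, so $A_k^\sigma = \{1\}$.

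For the inductive step at $i > 1$, assume $\sigma \notin S_j \cup \mu_j(S_j)$ for every $j < i$, and split on the role of $i$. If $i = z_{k'}$ for some $k' \neq k$, then $<i> \notin \sigma$ (so $\sigma \notin \mu_i(S_i)$), and I rule out $\sigma \in S_i$ by showing either that $\sigma \cup <i>$ is not a simplex (this happens directly when $X_\sigma = [m]$, where $A_{k'}^\sigma = \{i\}$), or that $\sigma \cup <i>$ was already matched in some earlier round (when $X_\sigma = [m] \setminus \{1\}$, using the small element $a \in X_k^\tau$ from condition (iii) together with Propositions \ref{prop1} and \ref{propmain} to descend $\sigma \cup <i>$ to $S_1 \cup \mu_1(S_1)$). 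If instead $i \in [m] \setminus \{z_1, \ldots, z_n\}$, then $<i> \in \sigma$ (so $\sigma \notin S_i$), and I need $\sigma \setminus <i>$ matched in a round $< i$. Split further on whether $i$ lies in some $X_j^\tau$: if $i \notin X_\tau$ then removing $<i>$ enlarges every $A_l^{\sigma \setminus <i>}$ to contain $i$, a direct check gives $(\sigma \setminus <i>) \cup <1>$ is a simplex and thus $\sigma \setminus <i> \in S_1$; if $i \in X_{j_0}^\tau$ (unique by condition (i), with $z_{j_0} \leq i$ by condition (ii) and $z_{j_0} < i$ since $i \notin \{z_1, \ldots, z_n\}$), then $(\sigma \setminus <i>) \cup <z_{j_0}>$ is a simplex, and another invocation of Propositions \ref{prop1} and \ref{propmain} certifies no earlier match, so $\sigma \setminus <i> \in S_{z_{j_0}}$.

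The main obstacle is the bookkeeping inside the inductive step: each time I claim a perturbed cell $\sigma \cup <i>$ or $\sigma \setminus <i>$ is matched at an earlier round, I must certify simultaneously that the cell is a simplex and that it has not been matched in any prior round. Condition (iii) is exactly what supplies the small element $a \in X_k^\tau$ needed to apply the toggle machinery of Propositions \ref{prop1} and \ref{propmain}; those propositions then perform the required bookkeeping by pushing any unmatched witness cell down through the rounds until it hits $S_1 \cup \mu_1(S_1)$, producing the required earlier match.
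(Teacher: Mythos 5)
Your proposal follows essentially the same route as the paper: you verify $\sigma \notin S_i \cup \mu_i(S_i)$ for every $i \in [m]$ via the same case split ($i$ equal to some $z_{k'}$ versus $i \in [m]\setminus\{z_1,\ldots,z_n\}$, and within these, $X_\sigma = [m]$ versus $[m]\setminus\{1\}$ and $i \in X_\tau$ versus not), with the same key computations ($A_{k'}^{\sigma} = \{z_{k'}\}$, $A_k^{\sigma} = \{1\}$) and the same Proposition \ref{prop1}/\ref{propmain} machinery, so that organizing the argument as an induction on $i$ rather than the paper's case analysis with a contradiction hypothesis is only a cosmetic difference. The one spot to repair is the subcase $i = z_{k'}$, $X_\sigma = [m]\setminus\{1\}$: the cell $\sigma \, \cup <z_{k'}>$ does not descend to $S_1 \cup \mu_1(S_1)$ (one has $A_{k'}^{\sigma \cup <z_{k'}>} = \{1\}$, so $\sigma \, \cup <z_{k'}> \cup <1> \ \notin \N(G)$ and $<1> \ \notin \sigma \, \cup <z_{k'}>$); what one actually shows, as in the paper, is that both $\sigma \, \cup <z_{k'}>$ and $\{\sigma \, \cup <z_{k'}>\} \setminus \{<a_0>\}$ survive unmatched through every round $t < a_0$, where $a_0 = \min X_k^{\tau}$, and hence are matched to each other at round $a_0 < z_{k'}$, which is the earlier match your induction requires.
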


\begin{proof} If $x \in [m] \setminus \{z_1, \ldots, z_n\}$ such that $x \notin X_{\sigma \setminus <x>}$,
then $x \neq 1$ and $\sigma \setminus \{<x>\} \sim \ <x>$. Thus, $\sigma \setminus \{<x>\} \in S_1$. 
In this case, $\sigma \notin S_x \cup \mu_x(S_x)$. We consider the following two cases.

\noindent{\bf{Case (i) :}} $y \in \{z_1, \ldots, z_n\}$.

\noindent{\bf{(a)}} $X_{\sigma} = [m]$.

In this case, $A_i^{\sigma} = \{z_i\} \ \forall \ i \in \{1, \ldots, n\}$. Therefore, for each 
$i \in [n], A_i^{\sigma \cup <z_i>}$ $ = \emptyset$. Thus, $\sigma \, \cup <z_i> \ \notin \N(G)$ which implies that
$\sigma \notin S_{z_i} \cup \mu_{z_i}(S_{z_i})$.

\noindent{\bf{(b)}} $X_{\sigma} = [m] \setminus \{1\}$.

Here, $A_i^{\sigma} = \{1,z_i\} \ \forall \ i \neq k$ and $A_k^{\sigma} = \{1\}$. Since $A_k^{\sigma \cup <1>} = 
\emptyset$, $\sigma \notin S_1$.
Let $a = \text{min} \{x \ | \ x \in X_k^{\tau}\}$. Since $X_{\sigma} = [m] \setminus \{1\}, a \neq 1$ and by 
$(iii), a < z_i \ \forall \ i \neq k$. Hence, $<a> \ \in \sigma$.

If $l \neq k$ then, $\sigma \, \cup <z_l> \ \sim \ <1>$ and so $\sigma \, \cup <z_l> \ \in \N(G)$.
$A_l^{\sigma \cup <z_l> \setminus <a>} = A_l^{\sigma \cup <z_l>} = \{1\}$ and therefore both
$\{\sigma \, \cup <z_l> \} \setminus \{<a>\}$ and $\sigma \, \cup <z_l> \ \notin S_1$. If $a = 2$, then  
$\sigma \, \cup <z_l> \ \in S_2 \cup \mu_2(S_2)$ (since $\sigma \cup <z_l>, 
\{\sigma \, \cup <z_l>\} \setminus \{<2>\} \notin S_1$), and 
therefore $\sigma \ \notin S_{z_l} \cup \mu_{z_l}(S_{z_l})$. 

Assume $a > 2$ and choose $t$, such that $1 < t < a$. Clearly $t \neq z_i \ \forall \ i \neq k$ and 
$<t> \ \in \sigma$. Since $t < z_i \ \forall \ i \neq k$, from $(ii) \ t \notin X_i^{\tau} \ \forall \ i \neq k$.
Since $a$ is the least element in $X_k^{\tau}$ and $t < a, t \notin X_k^{\tau}$ and thus $t \notin X_{\tau}$.
Therefore, $\{\sigma \, \cup <z_l> \} \setminus \{<t>\}, \{\sigma \, \cup <z_l>\} \setminus \{<a>, <t>\}$ will both be 
neighbor of $<t>$ and hence, they both belong to $S_1$.
Neither $\sigma \, \cup <z_l>$ nor $\{\sigma \, \cup <z_l>\} \setminus
\{<a>\}$ belong to $S_t \cup \mu_t(S_t) \ \forall \ t < a$. Therefore
 $\sigma \, \cup <z_l> \ \in S_a \cup \mu_a(S_a)$. We can now conclude that $\sigma \notin S_{z_i} \cup \mu_{z_i}(S_{z_i}) \
 \forall \ i \in [n]$. \\
 \noindent{\bf{Case (ii):}} $y \in [m] \setminus \{z_1, \ldots, z_n\}, y \in X_{\sigma \setminus <y>}$.\\
 Since $y \in X_{\sigma \setminus <y>}, y \in X_{i}^{\tau}$ for some $i \in [n]$. 
 From $(i)$ and $(ii)$, $y \in X_i^{\tau} \setminus \bigcup \limits_{j \neq i} X_j^{\tau}$ and $y > z_i$.
 We first assume that $\sigma \in S_y \cup \mu_y(S_y)$, {\it i.e.},
 $\sigma \setminus \{<y>\} \notin S_j \cup \mu_j(S_j), 1 \leq j < y$. The set 
 $\{\sigma \, \cup <z_i>\} \setminus \{<y>\}$ has a neighbor 
 $z_1 \ldots z_{i-1} y z_{i+1} \ldots z_n$ and is therefore a simplex  in $\N(G)$.
 \begin{claim}
  $\{\sigma \, \cup <z_i>\} \setminus \{<y>\} \notin S_t \cup \mu_t(S_t), 1 \leq t < z_i$.
 \end{claim}
Suppose $\exists \ i_1 < z_i$ such that $\{\sigma \, \cup <z_i> \} \setminus \{<y>\} \in S_{i_1} \cup \mu_{i_1}(S_{i_1})$.
Since $<y> \ \notin \sigma  \setminus \{<y>\}$,  Proposition \ref{prop1} holds when $\sigma$ is replaced by 
$\sigma \setminus \{<y>\}$ in the definition of $\eta$. Now, using  Proposition \ref{propmain}, 
we get a simplex $\xi_s, 1 \leq s < i_1 \leq m$, where $\xi_s = \{\sigma \, \cup \{<j_1>, \ldots, <j_l>\}\}
\setminus \{<y>, <k_1>, \ldots, <k_t>\}$ if $s$ is odd and $\xi_s = \{\sigma \, \cup \{<z_i>,<j_1>, \ldots, <j_l>\}\}
\setminus \{<y>, <k_1>, \ldots, <k_t>\}$ if $s$ is even, $j_1, \ldots, j_l, k_1, \ldots, k_t < z_i$ such that 
exactly one element in $\{\xi_s, \xi_s \setminus \{<z_i>\}\} \in S_1 \cup \mu_1(S_1)$
(or $\xi_s, \xi_s \, \cup <z_i> \in S_1 \cup \mu_1(S_1)$).
From  Proposition \ref{prop1}, this  is not possible and therefore $\{\sigma \, \cup <z_i>\} \setminus \{<y>\} \notin 
S_t \cup \mu_t(S_t) \ \forall \ 1 \leq t < z_i$. Thus the claim is true.

Since, $\sigma \setminus \{<y>\} 
\notin S_t \cup \mu_t(S_t) \ \forall \ 1 \leq t < z_i$ (by the assumption),  $\sigma \setminus \{<y>\} \in 
S_{z_i}$, a contradiction. Thus $\sigma \notin S_y \cup \mu_y(S_y)$. 
Therefore, $\sigma$ is a critical cell.

 \end{proof}
Combining  Lemmas \ref{theorem1}, \ref{lemma5} and \ref{lemma6}, we have the following necessary and sufficient conditions for 
$\sigma$ to be a critical cell.
\begin{thm} \label{theorem2}
 $\sigma$ is a critical cell if and only if 
  \begin{itemize}
  \item[(i)] $X_{\sigma} = [m] \ \text{or} \ [m]\setminus \{1\}.$
  \item[(ii)]  $<x> \ \in \sigma$ for each $x \in [m] \setminus \{z_1, \ldots, z_n\}$.
  \item[(iii)] $x \in X_i^{\tau} \Rightarrow x \geq z_i \ \forall \ 1 \leq i \leq n$.
 \item[(iv)] $X_r^{\tau} \cap X_s^{\tau} = \emptyset \  \forall \ r \neq s$.
  \item[(v)] there exists $ z < \text{min} \{z_1, \ldots, z_{k-1} ,\widehat{z_k}, z_{k+1}, \ldots,  z_n\}$
  where $z \in X_k^{\tau}$.
 \end{itemize}
\end{thm}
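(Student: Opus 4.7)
The plan is to recognize that this theorem is nothing more than the consolidation of the three preceding lemmas, and to assemble them cleanly in both directions. Both directions reduce to matching up enumerations, so I expect no real obstacle beyond careful bookkeeping.

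For the necessity direction, I would start by assuming $\sigma$ is a critical cell. Conditions $(i)$ and $(ii)$ of the theorem are then immediate from Lemma \ref{theorem1}$(i)$ and $(ii)$, using the standing setup that $\sigma \subset N(z_1\ldots z_n)$ with $z_k=1$ or $N(\sigma)=\langle 1\rangle$ (the case analysis made just before Lemma \ref{theorem1}). Conditions $(iv)$, $(iii)$, and $(v)$ are then read off from Lemma \ref{lemma5}$(i)$, $(ii)$, and $(iii)$, respectively, which apply because $\sigma$ is critical.

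For the sufficiency direction, suppose $\sigma$ satisfies $(i)$--$(v)$. Conditions $(i)$ and $(ii)$ of the theorem are precisely the standing hypotheses made in the paragraph immediately preceding Lemma \ref{lemma6}, namely that $X_\sigma = [m]$ or $[m]\setminus\{1\}$ and $\langle x\rangle \in \sigma$ for every $x \in [m]\setminus\{z_1,\ldots,z_n\}$. Conditions $(iv)$, $(iii)$, and $(v)$ of the theorem coincide verbatim with conditions $(i)$, $(ii)$, and $(iii)$ of Lemma \ref{lemma6}. Hence Lemma \ref{lemma6} applies and yields that $\sigma$ is a critical cell.

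Since both directions are just invocations, the hard work has already been done in the earlier lemmas; the only step requiring care is to ensure the correspondence of the two labellings and to confirm that nothing in the hypotheses of Lemmas \ref{theorem1}, \ref{lemma5}, or \ref{lemma6} is stronger than what is packaged in conditions $(i)$--$(v)$. This is routine, and so the proof of Theorem \ref{theorem2} is essentially a one-line citation of the three lemmas.
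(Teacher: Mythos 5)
Your proposal is correct and matches the paper exactly: the paper itself presents Theorem \ref{theorem2} as an immediate combination of Lemmas \ref{theorem1}, \ref{lemma5} and \ref{lemma6}, with necessity coming from the first two and sufficiency from the third under the standing hypotheses stated just before Lemma \ref{lemma6}. Your identification of the correspondence between the lemmas' numbered conditions and the theorem's items (i)--(v) is the same bookkeeping the paper relies on.
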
 
\noindent{\bf{Case 2.}} $N(\sigma) = \ <1>$.

In this case, for all critical cells $\sigma$, $X_{\sigma} = [m] \setminus \{1\}$, which implies that
$1 \in A_i^{\sigma} \ \forall \ i \in [n]$. Further, since $A_i^{\sigma} \cap A_j^{\sigma} = \{1\}$ for $i \neq j$, 
$A_{i_1}^{\sigma} = A_{i_2}^{\sigma} = \{1\}$ for at least two distinct elements $i_1, i_2$ in $[n]$ ($A_i^{\sigma} = \{1\}$
for only 
one $i \Rightarrow z_1 \ldots z_{i-1} 1 z_{i+1} \ldots z_n \in N(\sigma), z_j \in A_j^{\sigma},
j \neq i$ and $A_i^{\sigma} \neq \{1\} \ \forall \ i \Rightarrow \sigma \in S_1$).
\begin{lem} \label{critical1}
 If $\sigma$ is critical, then $\sigma = \{<2>, \ldots, <m>\}$.
\end{lem}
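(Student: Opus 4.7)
The preceding discussion establishes that a critical cell $\sigma$ with $N(\sigma) = \{<1>\}$ satisfies $<1> \notin \sigma$ (since $<1> \not\sim <1>$), $X_\sigma = [m] \setminus \{1\}$, and $A_{i_1}^\sigma = A_{i_2}^\sigma = \{1\}$ for two distinct indices $i_1, i_2 \in [n]$. My plan is to rule out every cell with $N = \{<1>\}$ other than $\{<2>, \ldots, <m>\}$ by exhibiting a matching partner. A preliminary observation used throughout: any cell $\rho$ with $N(\rho) = \{<1>\}$ automatically lies in $S_1'$, because $N(\rho) \cap N(<1>) = \{<1>\} \cap \{f : 1 \notin \text{Im}\,f\} = \emptyset$ forces $\rho \cup <1> \notin \N(G)$ (so $\rho \notin S_1$), and $<1> \notin \rho$ rules out $\rho \in \mu_1(S_1)$.

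First I would show $\{<2>, \ldots, <m>\} \subseteq \sigma$. Suppose otherwise, and let $k_0 \in \{2, \ldots, m\}$ be minimal with $<k_0> \notin \sigma$. Then $\sigma \cup <k_0>$ is a cell with $N(\sigma \cup <k_0>) = \{<1>\}$ (since $<1> \sim <k_0>$), so both $\sigma$ and $\sigma \cup <k_0>$ lie in $S_1'$. I claim $\sigma \in S_{k_0}$, matched with $\sigma \cup <k_0>$, contradicting criticality. This reduces to verifying $\sigma, \sigma \cup <k_0> \in S_{k_0 - 1}'$. For $2 \leq j < k_0$, the minimality of $k_0$ gives $<j> \in \sigma \subseteq \sigma \cup <k_0>$, excluding both from $S_j$; ruling out membership in $\mu_j(S_j)$ amounts to showing that $\sigma \setminus <j>$ (respectively $(\sigma \cup <k_0>) \setminus <j>$) does not belong to $S_j$. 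This follows from a cascade argument in the spirit of Proposition \ref{propmain}: either the set in question admits a common neighbor $f$ with $1 \notin \text{Im}\,f$ and gets matched at stage $\mu_1$, or one descends recursively to a smaller index.

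Having established $\sigma \supseteq \{<2>, \ldots, <m>\}$, suppose $\sigma$ also contains a non-constant vertex $g$ (necessarily an injection $[n] \to [m]$ avoiding $1$, since $<1> \in N(\sigma) \subseteq N(g)$). Let $k_0 = \min \text{Im}\,g \geq 2$. Because $g$ contributes $k_0$ to $X_\sigma$, $X_{\sigma \setminus <k_0>} = [m] \setminus \{1\}$ is unchanged and $<1>$ remains a common neighbor of $\sigma \setminus <k_0>$. I would argue $\sigma \setminus <k_0> \in S_{k_0}$, matching $\sigma$ with $\sigma \setminus <k_0>$ via $\mu_{k_0}$. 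The verification follows the same inductive cascade as in the first step, turning on whether $(\sigma \setminus <k_0>) \cup <1>$ is a cell: the candidate common neighbor $<k_0>$ fails to be adjacent to $g$ because $k_0 \in \text{Im}\,g$, and ruling out other candidates $f$ with $1 \notin \text{Im}\,f$ is controlled by the minimality of $k_0$. Combining the two steps forces $\sigma = \{<2>, \ldots, <m>\}$. The main obstacle, common to both steps, is the cascade verification of $S_{k_0 - 1}'$ membership; it requires carefully tracking which ancestors are matched at each earlier stage $\mu_j$, and this parallels the technical work in the proof of Proposition \ref{propmain}.
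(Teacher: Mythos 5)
Your overall strategy---exhibit a matching partner for any cell with $N(\sigma)=\ <1>$ other than $\{<2>,\ldots,<m>\}$---is the same as the paper's, but your two-step decomposition (first show every constant $<j>$, $j\geq 2$, lies in $\sigma$; then rule out injective vertices) leaves a genuine gap in the first step. To place $\sigma$ in $S_{k_0}$ you must show $\sigma\,\cup <k_0>\ \in S_{k_0-1}'$, and in particular $\sigma\,\cup<k_0>\ \notin \mu_j(S_j)$ for $2\leq j<k_0$, i.e.\ $\{\sigma\,\cup<k_0>\}\setminus\{<j>\}\notin S_j$. Since (inducting on $j$) $\sigma\,\cup<k_0>$ lies in $S_{j-1}'$, the only way to get this is to show that $\{\sigma\,\cup<k_0>\}\setminus\{<j>\}$ is already matched at some stage $i<j$; the natural witness is $<j>$ itself as a common neighbor of $\{\{\sigma\,\cup<k_0>\}\setminus\{<j>\}\}\,\cup<1>$, but that requires $j\notin X_{\sigma\setminus<j>}$, i.e.\ that no injective vertex of $\sigma$ takes the value $j$. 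That is precisely the content of your second step for indices below $k_0$, which you prove only after (and assuming) the first step. Your appeal to ``a cascade in the spirit of Proposition \ref{propmain}'' does not fill this hole: that cascade is driven by Proposition \ref{prop1}, whose hypotheses (the element $a$ chosen relative to the sets $X_r^{\tau}$ and the $z_r$ for a cell $\sigma\subset N(z_1\ldots z_n)$) belong to Case 1 and are not available when $N(\sigma)=\ <1>$.

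The paper resolves exactly this circularity by running a single induction on $t$ carrying the joint invariant ``$<t>\ \in\sigma$ \emph{and} $t\notin X_{\sigma\setminus<t>}$''; the second half is what certifies $\{\sigma\,\cup<m>\}\setminus\{<t>\}\in S_1$ at every later stage, and at the last stage it forces any injective $f\in\sigma$ to satisfy $\text{Im}\,f\subseteq\{m\}$, impossible since $n>1$. If you fold your second step into the first as a simultaneously proved invariant, you recover the paper's argument; as written, step 1 is not closed. (Your second step is essentially sound once step 1 is granted, modulo taking $k_0$ to be the minimum of $X_{\tau}$ over \emph{all} injective vertices of $\sigma$ rather than of a single $g$, so that $<j>$ is genuinely adjacent to every injective vertex when $j<k_0$.)
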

\begin{proof}
 If $<i> \ \notin \sigma$ for $i \neq 1$, then $\sigma \, \cup <i> \ \subset N(<1>)$ thereby implying that
 $\sigma \, \cup <i> \ \in \N(G)$. 

 The proof is by induction on $i$. If $<2> \ \notin \sigma $, then $\sigma, \sigma \, \cup <2> \ \notin S_1$
 implies that $\sigma \in S_2$, a contradiction.

 If $2 \in X_{\sigma \setminus <2>}$, then $2 \in X_l^{\sigma \setminus <2>}$ for at least one $l \in [n]$. If 
 $2 \in X_l^{\sigma \setminus <2>}$ for exactly one $l \in [n]$, then at least one of $A_{i_1}^{\sigma \setminus <2>}$ or 
 $A_{i_2}^{\sigma \setminus <2>}$ is $\{1\}$ and if $2 \in X_l^{\sigma \setminus <2>} \cap X_k^{\sigma \setminus <2>}$, then 
 $A_{i_1}^{\sigma \setminus <2>} = A_{i_2}^{\sigma \setminus <2>} = \{1\}$. In both these cases 
 $\sigma \setminus <2> \ \notin S_1$ and therefore, $\sigma \in S_2$, a contradiction. Therefore, $<2> \ \in \sigma$ and 
 $2 \notin X_{\sigma \setminus <2>}$. 

 Assume that $<t> \ \in \sigma $ and $t \notin X_{\sigma \setminus <t>} \ \forall \ 1 < t < m$. Suppose 
 $<m> \ \notin \sigma$. Since $t \notin X_{\sigma \setminus <t>} \ \forall \ 1 < t < m$, 
 $<t> \ \in N(\{\sigma \, \cup <m>\} \setminus \{<t>\})$, which implies that
 $\{\sigma \, \cup <m>\} \setminus \{<t>\} \in S_1$. 
 Therefore, $\sigma \, \cup <m> \ \notin S_t \cup \mu_t(S_t) \ \forall \ 1 \leq t < m$, which implies that 
 $\sigma \in S_m$, a contradiction. Hence $<m> \ \in \sigma$.

 If $m \in X_{\sigma \setminus <m>}$, then $m \in \text{Im} \, f$, where $f \in \sigma$ and $|\text{Im}\, f| = n$. 
 By the induction 
 hypothesis, $1, \ldots, m-1 \notin \text{Im}\, f$. Further, since $n > 1$, this case is not possible. Hence, 
 $m \notin X_{\sigma \setminus <m>}$.
 
\end{proof}

We now describe the Morse Complex $\M= (\M_{i}, \partial)$ corresponding to this acyclic matching on the poset $P$.
If $c_i$ denotes the number of critical $i$ cells,
then the free abelian group generated by these critical cells is denoted by $\mathcal{M}_i$.
We use the following version
of Theorem \ref{dmt}, from which we explicitly compute the boundary maps in the Morse Complex $\M$.

\begin{prop} \label{prop6}(Theorem 11.13 \cite{dk})

Let $X$ be a simplicial complex and $\mu$ be an acyclic matching
on the face poset of $X \setminus \{\emptyset\}$. Let $c_i$ denote
the number of critical $i$ cells of $X$. Then
\begin{itemize}
 \item [(a)] $X$ is homotopy equivalent to $X_c$, where $X_c$ is a $CW$ complex with $c_i$ cells in dimension $i$.
 \item [(b)] There is a natural indexing of cells of $X_c$ with the critical cells of $X$ such that for any two cells
 $\tau$ and $\sigma$ of $X_c$ satisfying dim $\tau$ = dim $\sigma + 1$, the incidence number $[\tau: \sigma]$
 is given by
 \begin{center}
  $[\tau:  \sigma ]$ = $\sum\limits_{c} w(c).$
\end{center}
The sum is taken over all (alternating) paths $c$ connecting $\tau$ with $\sigma$
i.e., over all sequences $c$ = $\{\tau, y_1, \mu(y_1 ), \ldots, y_t, \mu(y_t ), \sigma \}$ such that $\tau \succ y_1$,
$\mu(y_t) \succ \sigma$, and $\mu(y_i) \succ y_{i+1}$ for $i = 1, \ldots, t-1$. The quantity $w(c)$ associated to this
alternating
path is defined by
\begin{eqnarray*}
 w(c) := (-1)^t [\tau: \sigma] [\mu(y_t): \sigma] \prod\limits_{i=1}^{t}[ \mu(y_i): y_i]
 \prod\limits_{i=1}^{t-1} [\mu(y_i): y_{i+1}]
\end{eqnarray*}
where all the incidence numbers are taken in the complex $X$.
\end{itemize}
\end{prop}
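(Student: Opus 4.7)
The statement is essentially a direct restatement of the main theorem of Discrete Morse Theory as formulated in Chapter 11 of \cite{dk}, so the cleanest approach is to invoke that result directly. For a self-contained sketch, however, my plan is to realize the acyclic matching $\mu$ as a sequence of elementary collapses on $X$ and then track how those collapses affect the cellular chain complex. For part (a), the acyclicity of $\mu$ guarantees the existence of a linear extension of the face poset $P\setminus\{\emptyset\}$ in which every matched pair $(\sigma,\mu(\sigma))$ can be processed in an order where $\sigma$ is a free face of $\mu(\sigma)$ at the moment of its removal. Collapsing all matched pairs in such an order gives a strong deformation retract of $|X|$ onto a CW complex $X_c$ whose cells are in bijection with the critical simplices, so $X_c$ has $c_i$ cells in dimension $i$.

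For part (b), the boundary formula must be obtained by monitoring how the cellular boundary $\partial$ evolves under each elementary collapse. When a pair $(y,\mu(y))$ is collapsed, each chain generator $\tau$ whose boundary contained $\mu(y)$ must be rewritten; the $\mu(y)$ term is eliminated by subtracting an appropriate multiple of $\partial\mu(y)$, so the new boundary of $\tau$ picks up all faces $y'$ of $\mu(y)$ other than $y$, each weighted by $-[\tau:\mu(y)][\mu(y):y']/[\mu(y):y]$. Iterating this substitution once for every matched pair encountered on a descent from $\tau$ toward $\sigma$ produces, in the terminal complex $X_c$, exactly one contribution to $[\tau:\sigma]$ for each alternating path $\tau \succ y_1 \prec \mu(y_1) \succ y_2 \prec \cdots \succ \mu(y_t) \succ \sigma$. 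The product of the incidence numbers along the path together with the alternating sign $(-1)^t$ recovers the weight $w(c)$ given in the statement.

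The main obstacle is the sign and normalization bookkeeping: one must verify that the iterated substitution produces exactly the product formula for $w(c)$ and that no double-counting occurs when several matched pairs lie between the same pair of critical cells. This is handled by induction on the number of matched pairs already processed, maintaining the invariant that after step $k$ the updated boundary of every non-collapsed cell equals the sum of $w(c)$ over all alternating paths using only pairs processed so far. Because this sign-tracking argument is carried out in full detail in the proof of Theorem 11.13 of \cite{dk}, my preferred presentation is to state the proposition, cite \cite{dk}, and defer the technical computation.
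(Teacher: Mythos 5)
Your proposal matches the paper exactly: Proposition \ref{prop6} is stated there as a quoted result (Theorem 11.13 of \cite{dk}) with no proof given, and your plan of citing that reference is precisely what the authors do. Your supplementary sketch via elementary collapses and iterated boundary substitution along alternating paths is a standard and correct outline of how that theorem is proved, so nothing further is needed.
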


\subsection{Alternating paths}
 Since we have a description of the critical cells in $\N(G)$, we now study the alternating paths between them.
 We first consider critical $p+1$ cells, which are of the type
 $\sigma = \{<x> | \ x \in T \} \cup \{f_1, f_2\} \subset N(z_1 \ldots z_n)$, where $T = [m] \setminus \{z_1, \ldots, z_n\}$, 
 $| \text{Im} \,f_1| = |\text{Im}\, f_2| = n$ and $z_k = 1$. 

 Let  $\tau = \{f_1, f_2\}$ and the non-empty sets $X_i^{\tau} \cap \, T, 1 \leq i \leq n$ be 
  labelled $Y_{i_1}, \ldots, Y_{i_q}$, where $i_1 \leq i_2 \leq \ldots \leq i_q$.
  By Theorem \ref{theorem2} $(iv)$, $Y_{i_j} \cap Y_{i_k} = \emptyset \ \forall \ j \neq k.$
   For any $l, 1 \leq l \leq q$, let $z = \text{min} \{z_{i_1}, \ldots, z_{i_{l}}\}, D$ 
   any subset of $\{z_{i_1}, \ldots, z_{i_{l}}\}$ and $C \subset \bigcup \limits_{j=1}^{l} Y_{i_j}$ be such that
  $C \cap Y_{i_j} \neq \emptyset \ \forall \ 1 \leq j \leq l$. We first prove the following: 
\begin{lem} \label{lemma11}
The cell
$\alpha = \{ \sigma \cup \{<y>  | \ y \in D \}\} \setminus \{<x>  | \ x \in C \}
  \in S_{z} \cup \mu_z(S_z)$.
  \end{lem}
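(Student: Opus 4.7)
I would proceed by strong induction on $z$ and verify, for each instance of the lemma, three claims: (a) $\alpha \in \N(G)$; (b) $\alpha$ is unmatched at every level $t < z$; (c) $\alpha$ is paired at level $z$ with the natural partner ($\alpha \cup <z>$ when $z \notin D$, $\alpha \setminus <z>$ when $z \in D$). The base case $z = 1$ makes (b) vacuous, leaving only the simplex checks.

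For (a), I exhibit an element of each $A_i^\alpha$. When $i = i_j$ for some $1 \leq j \leq l$, any $c \in C \cap Y_{i_j}$ belongs to $X_{i_j}^\tau$ only (by Theorem~\ref{theorem2}(iv)) and hence to $A_{i_j}^\alpha$. When $i \notin \{i_1,\dots,i_l\}$, the value $z_i$ lies in $A_i^\alpha$ because $z_i \notin D$ and the neighbour condition with $z_1\cdots z_n$ forces $z_i$ to appear in no $X_j^\tau$ with $j \neq i$. Step (c) reduces to Step (b) applied to the partner cell, which is again a simplex by the same computation.

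Step (b) is the main work. Fix $t < z$ and split on the type of $t$. If $t \in T$, then $c \geq z > t$ for every $c \in C$ by Theorem~\ref{theorem2}(iii), so $t \notin C$ and $<t> \in \alpha$; pairing at level $t$ would require $\alpha \setminus <t>$ to still be available, so I show it has been matched earlier. In the first sub-case, $t \notin X_{j'}^\tau$ for every $j' \neq k$: then removing $<t>$ frees $t$ in $A_k$, so $(\alpha \setminus <t>) \cup <1>$ is a simplex and $\alpha \setminus <t> \in S_1$. In the second, $t \in X_{j'}^\tau$ for some unique $j' \neq k$; Theorem~\ref{theorem2}(iii) forces $z_{j'} \leq t < z$, so $j' \notin \{i_1,\dots,i_l\}$, and the inductive hypothesis applied to the enlarged configuration $C' := C \cup \{t\}$ (with new minimum $z' = z_{j'} < z$) matches $\alpha \setminus <t>$ at level $z_{j'} < t$. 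The remaining case, $t = z_s$ with $s \notin \{i_1,\dots,i_l\}$ (forced by the minimality of $z$), has $<t> \notin \alpha$; a direct computation of $A_s^\alpha$ using the critical-cell structure of $\sigma$ shows either $A_s^\alpha = \{z_s\}$ (so $\alpha \cup <t>$ is not a simplex), or the same inductive mechanism matches $\alpha \cup <t>$ strictly before level $t$.

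The principal obstacle is the bookkeeping of the inductive step: when the enlarged index set $\{i_1,\dots,i_l,j'\}$ fails to be an initial segment of the sorted indices, the lemma does not literally apply. One resolves this either by proving a slightly stronger version that allows arbitrary subsets of $\{i_1,\dots,i_q\}$, or by invoking the lemma after relabelling. Well-foundedness of the induction follows because each application of the inductive hypothesis strictly decreases $z$.
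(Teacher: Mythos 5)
Your overall strategy is genuinely different from the paper's. The paper does not check each level $t<z$ directly: it proves a transfer principle (Proposition \ref{prop4}) asserting that a perturbation $\eta$ of $\sigma$ by elements $<z$ and the corresponding perturbation $\gamma$ of $\alpha$ have the same membership in $\N(G)$ and in $S_1$, and then runs a descending chain of cells $\xi_1,\xi_2,\ldots$ alternating between the $\sigma$-side and the $\alpha$-side, whose matching levels strictly decrease; the criticality of $\sigma$ forces the chain down to level $1$, where Proposition \ref{prop4} yields a contradiction. Your plan replaces this by a direct case analysis on $t<z$ together with an induction on $z$ that re-invokes the lemma for the enlarged configuration $C\cup\{t\}$. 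Your cases (a), (c), the sub-case $t\in T$ with $t\notin X_{j'}^{\tau}$ for $j'\neq k$, and the sub-case $t\in Y_{j'}$ with $z_{j'}<t$ are all sound (and the initial-segment bookkeeping you flag is harmless: nothing in the argument uses that $\{i_1,\ldots,i_l\}$ is an initial segment, so the stronger statement for arbitrary index subsets is what one proves).

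The genuine gap is your third case, $t=z_s<z$ with $s\notin\{i_1,\ldots,i_l\}$ and $s\neq k$. There $A_s^{\alpha}=A_s^{\sigma}$, and when $X_{\sigma}=[m]\setminus\{1\}$ this equals $\{1,z_s\}$, so $\alpha\,\cup <z_s>$ \emph{is} a simplex (one checks $A_j^{\alpha\cup<z_s>}=A_j^{\alpha}$ for $j\neq s$ and $A_s^{\alpha\cup<z_s>}=\{1\}$). To conclude $\alpha\notin S_{z_s}$ you must show $\alpha\,\cup<z_s>\notin S_{z_s-1}'$, i.e.\ that it is matched strictly below level $z_s$. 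But $\alpha\,\cup<z_s>$ is \emph{not} an instance of the lemma: $z_s\notin\{z_{i_1},\ldots,z_{i_l}\}$ and $C\cap Y_s=\emptyset$, so "the same inductive mechanism" has nothing to bite on, and no induction on $z$ reaches this cell. A separate argument is required here; the natural one is to show that $\alpha\,\cup<z_s>$ is matched at level $a=\min X_k^{\tau}$, which satisfies $a<z_s$ by Theorem \ref{theorem2}(v), following the computation in Case (i)(b) of the proof of Lemma \ref{lemma6} — but that is itself another "unmatched below $a$" verification of the same kind, not a citation of the inductive hypothesis. This is precisely the step the paper's Proposition \ref{prop4} plus descending-chain device is designed to absorb uniformly, which is why the published proof avoids your case split altogether.
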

\begin{proof}

Any element of $C$ is different from $1$ and therefore belongs to $X_{\sigma}.$ For  $j \in \{1, \ldots, l\}$, 
there exists at least one element $y \in C$ such that $y \in A_{i_j}^{\alpha}$ and therefore in $ X_{i_{j}}^{\alpha}$.
If $t \in [n] \setminus \{i_1, \ldots, i_l\}$, then $z_t \in A_{t}^{\alpha}$. Hence, $A_t^{\alpha} \neq \emptyset
\ \forall \ t \in [n]$ and therefore $\alpha \in \N(G).$
\begin{itemize}
\item[(i)] Let $z = 1$. If $t \in [n] \setminus \{i_1, \ldots, i_l\}$, then $z_t \neq 1$ and 
$z_t \in A_t^{\alpha \cup <1>}$,
which implies that  $A_t^{\alpha \cup <1>} \neq \emptyset$. 
Since $T \cap \{1\} = \emptyset$, 
$y \in A_{i_j}^{\alpha}$ implies that $y \in A_{i_j}^{\alpha \cup <1>} \ \forall \ j \in \{1, \ldots, l\}.$
Thus, $\alpha \, \cup <1> \ \in \N(G)$ and therefore $\alpha \in S_1 \cup \mu_1(S_1)$.
\item[(ii)] Let $z > 1.$  Consider the cell
$\eta = \{\sigma \cup \{<a_i>, 1 \leq i \leq s\}\} \setminus \{<b_j>, 1 \leq j \leq r\}$, $s, r \geq 0, 
a_i, b_j < z $.
Let $D' = \{<x>  | \ x \in D \}$,  $C' = \{<y>  | \ y \in C\}$ and  

 \begin{center}
   $\gamma = \begin{cases} \{ \eta \cup D' \} \setminus C' & \text{if} ~ <z> \ \notin \ D' \\
   
              \{\eta \cup D' \} \setminus \{<z>, C'\} & \text{if} ~ <z> \ \in  D',
                       \end{cases}$
 \end{center}

\begin{prop} \label{prop4} $\eta \in \N(G) \Leftrightarrow \gamma , \gamma \, \cup <z> \ \in \N(G)$.
In particular, $\eta \in S_1 \Leftrightarrow \gamma, \gamma \, \cup <z> \ \in S_1$.

\end{prop}
Since $\sigma$ is critical, by  Theorem \ref{theorem2} $(iii)$  $y \in C$ has the property $ y > z$,. If $j \in \{1, \ldots, l\},$ then there exists $y \neq 1, z$ in $C$ such that $y$ belongs to $A_{i_j}^{\gamma}$ and 
therefore  also  to $A_{i_j}^{\gamma \cup <1>}, A_{i_j}^{\gamma \cup <z>}$ and $A_{i_j}^{\gamma \cup <z> \cup <1>}$.
If $1 \leq j \leq l$, $z_{i_j} \geq z > 1$ and $z_{i_j} \in A_{i_j}^{\eta}$ implies that 
$z_{i_j} \in A_{i_j}^{\eta \cup <1>}.$
Let $t \in [n] \setminus \{i_1, \ldots, i_l\}$. $X_{\sigma} = [m]$ or $[m] \setminus \{1\}$,
$j \in \{1, \ldots, l\}$ implies that $z_{i_j} \in X_{i_j}^{\sigma}$. Therefore,
$z_{i_j} \notin A_{t}^{\eta} \ \forall \ t \neq i_j$ and  $D \cap A_{t}^{\eta} = \emptyset.$ 
By the definition of $\gamma, A_{t}^{\eta} = A_t^{\gamma} \ \forall \ t \in [n] \setminus \{i_1, \ldots, i_l\}$.
$A_t^{\gamma \cup <z>} = A_t^{\gamma}$ or $A_t^{\gamma} \setminus \{z\}$.
Now, we only have to consider $t \in [n] \setminus \{i_1, \ldots, i_l\}.$ 

If $\eta \in \N(G),$ then $A_{t}^{\eta} \neq \emptyset $ and $z \notin A_t^{\eta}$. Therefore 
$A_t^{\eta} = A_{t}^{\gamma} = A_t^{\gamma \cup <z>}$ thereby showing that $\gamma, \gamma \, \cup <z> \ \in \N(G)$. 

If $\eta \notin \N(G),$ then for some $t \in [n] \setminus \{i_1, \ldots, i_l\}, A_{t}^{\eta} = \emptyset$  and hence
so are $A_t^{\gamma}$ and $A_t^{\gamma \cup <z>}.$ Thus $\gamma$ and $\gamma \, \cup <z> \ \notin \N(G).$ 

If $\eta \in S_1, A_{t}^{\eta \cup <1>} \neq \emptyset$ and therefore $A_t^{\gamma \cup <1>}$, $A_t^{\gamma \cup <z> \cup <1>}\neq \emptyset$. 

If $\eta \notin S_1$, then for some $t \in [n] \setminus \{i_1, \ldots, i_l\}, A_t^{\eta \cup <1>} = \emptyset$,
thereby implying that $A_t^{\eta} = \{1\}$. Thus, $A_t^{\gamma} = A_t^{\gamma \cup <z>} = \{1\}$ and
the proof of Proposition \ref{prop4} follows.

 Suppose there exists $i_1 < z$ such that   $\alpha \in S_{i_1} \cup \mu_{i_1}(S_{i_1})$.

Clearly, $i_1 > 1$ as  $\alpha \ \in S_{1} \cup \mu_1(S_1)$ implies that $\sigma \in S_1 \cup \mu_1(S_1)$, by Proposition \ref{prop4}, which is a  contradiction.
Define the cell
\begin{center}
   $\xi_1 = \begin{cases}
             \sigma \setminus \{<i_1>\} & \text{if} ~~ <i_1> \ \in \sigma\\
              \sigma \, \cup <i_1> & \text{if} ~~ <i_1> \ \notin \sigma.\\
                       \end{cases}$
 \end{center}
Since  $\alpha  \in S_{i_1} \cup \mu_{i_1}(S_{i_1})$, $\{\xi_{1}  \cup D'\} \setminus C'$ is always a simplex, which implies that  $\xi_{1} \in \N (G)$, by Proposition \ref{prop4}.
 If  $\xi_1 \in S_{i_1} \cup \mu_{i_1}(S_{i_1})$, {\it i.e.} $\xi_{1} \notin S_{j} \cup \mu_{j}(S_{j})$ $\forall \ j <i_1$, then $\sigma$ will belong to $S_{i_1} \, \cup \mu_{i_1}(S_{i_1})$, a contradiction. Therefore, $\exists \ i_2 < i_1$ such that 
 $\xi_1 \in S_{i_2} \cup \mu_{i_2}(S_{i_2})$. Now, $\{\xi_1 \, \cup D'\} \setminus C'$, which is $\alpha \setminus \{<i_1>\}$
 or $\alpha \ \cup <i_1>$, belongs to $S_{i_1} \cup \mu_{i_1}(S_{i_1}).$ If $i_2 =1$, then $\xi_1 \in S_1 \cup \mu_1(S_1)$ and  $\{\xi_1 \, \cup D'\} \setminus C' \notin S_1 \cup \mu_1(S_1)$, which is a  contradiction from Proposition \ref{prop4}.
 Let $i_2 > 1$.
 Define the cell 
 \begin{center}
   $\xi_2 = \begin{cases}
              \{\xi_1 \, \cup D'\} \setminus \{C', <i_2>\} & \text{if} ~~ <i_2> \ \in \sigma\\
              \xi_1 \, \cup D'\  \cup <i_2> & \text{if} ~~ <i_2> \ \notin \sigma.
                       \end{cases}$
 \end{center}

 By  Proposition \ref{prop4}, $\xi_2 \in \N(G)$. Since 
 $\{\xi_1 \, \cup D'\} \setminus C'\ \notin S_{i_2} \cup \mu_{i_2}(S_{i_2})$, there exists $i_3 < i_2$ such that
 $\xi_2 \in S_{i_3}\cup \mu_{i_3}(S_{i_3})$. Here, $\{\xi_2 \cup C'\}\setminus D'$ which is $\xi_1 \setminus \{<i_2>\}$ or
 $\xi_1 \, \cup <i_2> \ \in S_{i_2} \cup \mu_{i_2}(S_{i_2})$. If $i_3 = 1$, then we get a contradiction by Proposition \ref{prop4}. 
 Inductively, assume that there exists $l, 1  <l < m$, where $ i_{l+1} < i_{l} < \ldots < i_1 < z$ and 
 $\xi_{t} \in S_{i_{t+1}}\cup \mu_{i_{t+1}}(S_{i_{t+1}})$, $1 \leq t \leq l$  such that
\begin{center}
   $\xi_{t} = \begin{cases}
              \{\xi_{t-1} \, \cup D'\} \setminus \{C', <i_{t}>\} & \text{if} ~ <i_{t}> \ \in \xi_{t-1},~ t~\text{is}~\text{even}   \\
              \{\xi_{t-1} \, \cup D' \cup <i_{t}>\} \setminus C'& \text{if} ~~ <i_{t}> \ \notin \xi_{t-1},~ t~\text{is}~\text{even}\\
                                     \{\xi_{t-1} \cup C' \} \setminus \{D',<i_{t}>\} & \text{if} ~~ <i_{t}> \ \in \xi_{t-1},~ t~\text{is}~\text{odd}\\
              \{\xi_{t-1} \, \cup C' \cup <i_{t}>\} \setminus D' & \text{if} ~~ <i_{t}> \ \notin \xi_{t-1},~ t~\text{is}~\text{odd}.
                       \end{cases}$
 \end{center}

Since $\{\xi_{l} \cup C'\}\setminus D' ( \text{or} ~~ \{\xi_{l} \cup D' \} \setminus C') =  \xi_{l-1} \setminus \{<i_l>\}$ or 
 $\xi_{l-1} \, \cup <i_l> \ \in S_{i_l} \cup \mu_{i_l}(S_{i_l})$ and $i_l > i_{l+1}$, if $i_{l+1} = 1$, using Proposition \ref{prop4} we arrive at a contradiction.
 
By induction, there exists $1 < s \leq m$, and a cell 
 \begin{center}
   $\xi_s = \sigma \cup \begin{cases}  \{D',  \langle b_1 \rangle , \ldots, \langle b_r \rangle \}
 \setminus \{ C', \langle l_1 \rangle, \ldots, \langle l_t \rangle\} & \text{if} ~ \text{s}~ \text{is} ~  \text{even} \\
               \{<b_1>, \ldots, <b_r>\}
 \setminus \{ \langle l_1 \rangle , \ldots, \langle l_t \rangle \}& \text{if} ~ \text{s} ~ \text{is} ~  \text{odd},
                       \end{cases}$
 \end{center}

 where $r, t \geq 0$ and $b_1, \ldots, b_r, l_1, \ldots, l_t < z$ such that,  $\xi_{s} \in S_{1}\cup \mu_{1}(S_1)$ but
 $\{\xi_s \cup C'\} \setminus D'$ (if  $s$ is even) or $ \{ \xi_s \cup D' \} \setminus  C'$ (if $s$ is odd)
 $\notin S_1 \cup \mu_1(S_1)$. But, this  is not possible by Proposition \ref{prop4}.

Hence $\alpha \notin S_i \cup \mu_i(S_i) \ \forall \ i < z$. 
Replacing $\alpha$ by $\alpha \, \cup <z>$ (if $<z> \ \notin \alpha$) or by $\alpha \setminus \{<z>\}$ (if $<z> \ \in \alpha$)
and applying an argument similar to the one above, we see that  $\alpha \, \cup <z>$
(if $<z> \ \notin \alpha$) or $\alpha \setminus \{<z>\}$
(if $<z> \ \in \alpha$) $\notin S_i \cup \mu_i(S_i) \ \forall \ i < z.$
 Hence $\alpha \in S_{z} \cup \mu_z(S_z)$. 

 \end{itemize}
\end{proof}

\begin{lem} \label{lemma12}
 Let $\eta = \{<x_1>, \ldots, <x_p>, f_1, f_2\} \subset N(w_1 \ldots w_n)$ be a simplex,
 where $ w_k = 1, w_{i_0} = \text{min} 
 \{w_1, \ldots,\widehat{w_k}, \ldots, w_n\}$, $f_{2}(i) = w_i \ \forall \ i \neq k$ and
 $ [m] \setminus \{ w_1, \ldots, w_n\} = \{x_1, \ldots, x_p\}$.
 If $f_2(k) > w_{i_0}$, then $\eta \setminus \{f_1\} \in S_{w_{i_0}}$. 
\end{lem}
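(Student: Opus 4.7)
Write $\rho := \eta \setminus \{f_1\}$. From the definition of $S_{w_{i_0}}$, it suffices to prove: (a) $<w_{i_0}> \notin \rho$ and both $\rho, \rho \cup <w_{i_0}> \in \N(G)$; and (b) neither of these two cells lies in $S_j \cup \mu_j(S_j)$ for any $j < w_{i_0}$. Part (a) will be straightforward: $w_{i_0} \in \{w_1,\ldots,w_n\}$ is disjoint from the index set $\{x_1,\ldots,x_p\}$ of constants in $\rho$; $\rho \subset \eta \in \N(G)$; and I claim $<1>$ is a common neighbor of $\rho \cup <w_{i_0}>$, since $1 \neq x_l, w_{i_0}$ and $1 \notin \text{Im}(f_2) = \{w_i : i \neq k\} \cup \{f_2(k)\}$ (using $w_k = 1$ together with the hypothesis $f_2(k) > w_{i_0} > 1$).

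\medskip

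For (b) with $j = 1$, I plan to show that neither $\rho \cup <1>$ nor $\rho \cup <w_{i_0}> \cup <1>$ has any common neighbor. Adjacency with $<1>$, the $<x_l>$'s, and (in the second case) $<w_{i_0}>$ forces any such $h \in V(G)$ to have $\text{Im}(h) \subseteq \{w_i : i \neq k\}$, of cardinality only $n-1$ (or $n-2$); thus $h$ cannot be injective, and any constant $h = <w_{i_1}>$ fails adjacency to $f_2$, because $w_{i_1} \in \text{Im}(f_2)$ while $f_2(v') = w_{i_1}$ holds for at most one index $v'$. Consequently $\rho, \rho \cup <w_{i_0}> \notin S_1$, and as $<1>$ is absent from both, they are not in $\mu_1(S_1)$ either.

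\medskip

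For $1 < j < w_{i_0}$: since $w_{i_0} = \min\{w_i : i \neq k\}$ and $w_k = 1$, such $j$ must lie in $\{x_1,\ldots,x_p\}$, so $j = x_l$ for some $l$. Because $<x_l>$ already lies in $\rho$ and in $\rho \cup <w_{i_0}>$, neither cell can be in $S_{x_l}$. To rule out $\mu_{x_l}(S_{x_l})$, I will exhibit $\rho \setminus \{<x_l>\}$ and $\rho \cup <w_{i_0}> \setminus \{<x_l>\}$ as elements of $S_1$: take $h = <x_l>$; the adjacencies to each remaining $<x_i>$, to $<1>$, and to $<w_{i_0}>$ are immediate since $x_l \neq x_i, 1, w_{i_0}$, and $<x_l> \sim f_2$ follows from $x_l \neq w_i$ together with $x_l < w_{i_0} < f_2(k)$, which is precisely where the hypothesis $f_2(k) > w_{i_0}$ enters to guarantee $x_l \neq f_2(k)$. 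Since $S_1 \cap S_{x_l} = \emptyset$, this excludes both cells from $S_{x_l}$, and hence $\rho, \rho \cup <w_{i_0}>$ from $\mu_{x_l}(S_{x_l})$. Assembling (a) and (b) yields $\rho \in S_{w_{i_0}}$. The main subtlety, and the sole place the strict inequality $f_2(k) > w_{i_0}$ is required, is the compatibility $x_l \neq f_2(k)$ just noted; without it, the index $l^*$ with $x_{l^*} = f_2(k)$ could satisfy $x_{l^*} < w_{i_0}$, and the simple choice $h = <x_{l^*}>$ would fail, forcing the more involved chain-building argument of Lemma~\ref{lemma11}.
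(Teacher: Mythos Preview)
Your argument is correct and follows essentially the same route as the paper's proof: both verify that $\rho\cup\langle w_{i_0}\rangle$ is a simplex via the common neighbor $\langle 1\rangle$, that $\rho$ (and $\rho\cup\langle w_{i_0}\rangle$) lie outside $S_1$, and that for each $1<j<w_{i_0}$ one has $j=x_l$ with $\rho\setminus\{\langle x_l\rangle\}$ and $(\rho\cup\langle w_{i_0}\rangle)\setminus\{\langle x_l\rangle\}$ in $S_1$ by exhibiting $\langle x_l\rangle$ as a common neighbor. Your treatment is in fact more explicit than the paper's on two points---you spell out the exclusion of constant neighbors in the $j=1$ step and you separately verify $\rho\cup\langle w_{i_0}\rangle\notin S_1$---while the paper phrases the $j=1$ step via ``any injective neighbor $y$ of $\rho$ has $y_k=1$''. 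One small remark on your closing commentary: the hypothesis $f_2(k)>w_{i_0}$ is not used \emph{solely} for $x_l\neq f_2(k)$; you already invoked it in part~(a) to get $f_2(k)\neq 1$, which is needed for $\langle 1\rangle\sim f_2$.
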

\begin{proof}
$f_2(k) > w_{i_0} > 1$ implies that $f_2(k) \in \{x_1, \ldots, x_p\}$ and $1 \notin \text{Im}\,f_2$. Therefore, 
$\{\eta \, \cup <w_{i_0}>\} \setminus \{f_1\} \sim \ <1>$. If $y_1 \ldots y_n \sim \eta \setminus \{f_1\}$, 
then $y_k \notin \{w_1, \ldots, \widehat{w_k}, \ldots , w_n, x_1, \ldots, x_p\}$. 
Thus $y_k =1$ and $\eta \setminus \{f_1\} \notin S_1$. Consider $j$ such that $1 < j < w_{i_0}.$ Since 
$j \in \{x_1, \ldots, x_p\}$ and $j \notin X_{\eta \setminus \{f_1\}}$, $\eta \setminus \{f_1, <j>\}$ and 
$\{\eta \,\cup <w_{i_0}>\} \setminus \{f_1, <j>\} \sim \ <j>$, thereby showing that $\eta \setminus \{f_1\}$ and 
$\{\eta \, \cup <w_{i_0}>\} \setminus \{f_1\} \notin S_{j} \cup \mu_j(S_j) \ \forall \ j < w_{i_0}$. Thus 
$\eta \setminus \{f_1\} \in S_{w_{i_0}}$.

  \end{proof}

\begin{lem} \label{lemma13}
No element of  $S_1 \cup \mu_1(S_1)$  belongs to any alternating path between two critical cells.
\end{lem}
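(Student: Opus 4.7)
The plan is to argue by contradiction, using that the construction of $\mu$ partitions $P$ into the disjoint union of the blocks $\{S_j \sqcup \mu_j(S_j)\}_{j=1}^m$ together with the critical cells. Along the alternating path each $y_k$ lies in some $S_{j_k}$, and correspondingly $\mu(y_k) \in \mu_{j_k}(S_{j_k})$; so by the disjointness, any appearance of an element of $S_1 \cup \mu_1(S_1)$ in the path must come from an index $i$ with $y_i \in S_1$ (in which case $\mu(y_i) = y_i \cup <1> \in \mu_1(S_1)$). The task thus reduces to ruling out $y_i \in S_1$ for every $i$.

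Assume $y_i \in S_1$ and let $\rho$ be the cell immediately following $\mu(y_i)$ on the path, so that $\rho = y_{i+1}$ when $i < t$ and $\rho = \sigma$ when $i = t$. By construction $\rho = \mu(y_i) \setminus \{v\}$ for some $v \in y_i \cup <1>$. If $v = <1>$, then $\rho = y_i$, which either breaks the distinctness of cells on the path (when $\rho = y_{i+1}$) or forces the critical cell $\sigma$ to coincide with the non-critical cell $y_i \in S_1$. If $v \neq <1>$, then $<1> \in \rho$; but critical cells omit $<1>$ by Theorem \ref{theorem2}(ii) and Lemma \ref{critical1}, so $\rho \neq \sigma$, forcing $\rho = y_{i+1}$ with $<1> \in y_{i+1}$.

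The final step uses the matching structure itself: since $<1> \in y_{i+1}$, the subface $y_{i+1} \setminus \{<1>\}$ omits $<1>$, while its union with $<1>$ is the simplex $y_{i+1}$; hence $y_{i+1} \setminus \{<1>\} \in S_1$ and so $y_{i+1} = \mu_1(y_{i+1} \setminus \{<1>\}) \in \mu_1(S_1) \subset P \setminus S$. This contradicts the requirement that $y_{i+1}$ lie in $S$ to be matched by $\mu$. The only mildly subtle case is $y_{i+1} = \{<1>\}$, which is handled identically upon interpreting $\emptyset \in S_1$, its union with $<1>$ being the simplex $\{<1>\}$. I expect the main obstacle to be nothing deeper than the bookkeeping of which block of the matching each cell on the path can belong to; once that classification is in hand, every case collapses to the impossibility of $<1>$ sitting simultaneously in $y_{i+1}$ and on the $S$-side of $\mu$.
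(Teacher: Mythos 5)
Your argument is correct and follows essentially the same route as the paper's: any element of $S_1\cup\mu_1(S_1)$ on the path must be some $\mu(y_i)$ with $y_i\in S_1$, and the cell following $\mu(y_i)$ either equals $y_i$ (impossible) or contains $<1>$ and hence lies in $\mu_1(S_1)\subset P\setminus S$, contradicting that it must be matched upward by $\mu$. You are in fact slightly more explicit than the paper about the case where the cell after $\mu(y_i)$ is the terminal critical cell; the only blemish there is that the critical $0$-cell $\{<1>\}$ does contain $<1>$, but this is immaterial for the alternating paths between critical $(p+1)$- and $p$-cells to which the lemma is applied.
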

\begin{proof} 
 Let $c$ = $\{\tau, y_1, \mu(y_1), \ldots, y_t,
\mu(y_t),\alpha \}$ be an alternating path between the critical cells $\tau$ and $\alpha$. 
Let $S = \bigcup \limits_{i=1}^{m}S_i$ and $ \mu : S \rightarrow P \setminus S$ be the map such that 
$\mu|_{S_i} = \mu_i$. Here, $y_i \in S$, $\mu(y_i) \in \mu(S)$ and $S \cap \mu(S) = \emptyset.$
If $\gamma \in c$ and  $\gamma \in \mu_1(S_1)$, then 
$\gamma = \mu(y_i)$ for some $i \in [t]$. Since $[\mu(y_i): y_i] = \pm 1$, $y_i = \mu(y_i) \setminus \{<1>\}$.
In the alternating path,
$y_{i+1} \prec \mu(y_i)$ and $y_{i+1} \neq y_i$ implies that $<1> \ \in \mu(y_i) \setminus \{<1>\}$.
This implies that $y_{i+1} \in \mu(S)$, a contradiction. Therefore $\gamma \notin c.$ If $\gamma \in S_1$, then 
$\mu(\gamma) \in c$, a contradiction.
 \end{proof}
We are now ready to prove the main result of this section.
\begin{thm} \label{theorem3}
  There exist two critical $p$-cells $\beta$ and $\gamma$ such that there exists
  exactly one alternating path from $\sigma$  to each of $\beta$ and $\gamma$.
  Further, there exists no alternating path from $\sigma$ to any other critical $p$-cell.
\end{thm}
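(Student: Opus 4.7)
The plan is to identify, for the critical $(p+1)$-cell $\sigma = \{<x> : x \in T\} \cup \{f_1, f_2\}$, which facets of $\sigma$ can initiate an alternating path, and to trace each such path to a critical $p$-cell. I would begin by enumerating the $p+2$ facets: the $p$ facets $\sigma \setminus \{<x>\}$ for $x \in T$, and the two facets $\sigma \setminus \{f_j\}$, $j = 1, 2$. Applying Lemma \ref{lemma11} with $l = 1$, $D = \emptyset$, $C = \{x\}$, a facet $\sigma \setminus \{<x>\}$ with $x \in Y_{i_j}$ lies in $S_{z_{i_j}} \cup \mu_{z_{i_j}}(S_{z_{i_j}})$; when $i_j = k$ it falls into $S_1 \cup \mu_1(S_1)$ and is excluded by Lemma \ref{lemma13}. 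A short direct check, exhibiting $<x>$ (when $x \in T \setminus \bigcup_j Y_{i_j}$) or an appropriate $<z_{i_0}>$ (when $f_{3-j}$ differs from $z_1\ldots z_n$ at some position $i_0 \neq k$) as a common neighbor of the relevant cell together with $<1>$, shows that the remaining facets $\sigma \setminus \{<x>\}$ of that type, and the facets $\sigma \setminus \{f_j\}$ with $f_{3-j}$ differing from $z_1\ldots z_n$ outside position $k$, also lie in $S_1$ and are excluded.

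The surviving candidates split into three types: (a) $\sigma \setminus \{<x>\}$ with $x \in Y_{i_j}$ for some $i_j \neq k$, matched in $S_{z_{i_j}}$; (b) $\sigma \setminus \{f_j\}$ with $f_{3-j}$ either equal to $z_1\ldots z_n$ or differing from it only at position $k$ with $f_{3-j}(k) < \min\{z_i : i \neq k\}$, which by Theorem \ref{theorem2} is a critical $p$-cell and yields an alternating path of length $t = 0$; and (c) $\sigma \setminus \{f_j\}$ with $f_{3-j}$ differing from $z_1\ldots z_n$ only at position $k$ but with $f_{3-j}(k) \geq \min\{z_i : i \neq k\}$, which by Lemma \ref{lemma12} lies in $S_{w_{i_0}}$ for the second-smallest value $w_{i_0} = \min\{z_i : i \neq k\}$.

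For facets of type (a) or (c), the matching partner $\mu(y_1) = y_1 \cup <j>$ is a new $(p+1)$-cell from which the path must continue by deleting some vertex other than $<j>$. To trace this continuation, I would repeatedly apply Lemma \ref{lemma11} (now with larger $C$ or nontrivial $D$) together with Lemma \ref{lemma13} to the facets of $\mu(y_1)$ and its successors, showing that at each stage all but one continuation lies in $S_1 \cup \mu_1(S_1)$ and is therefore excluded. The unique surviving continuation must terminate at a critical $p$-cell, identified through Theorem \ref{theorem2}. Collating the contributions from the three types of surviving facets, exactly two critical $p$-cells $\beta$ and $\gamma$ are reached from $\sigma$, each by a unique alternating path.

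The main obstacle is the uniqueness step: at each level of an alternating path one must verify that all but one branching choice lies in $S_1 \cup \mu_1(S_1)$, which requires a careful interplay between Lemmas \ref{lemma11}, \ref{lemma12}, \ref{lemma13}, and Theorem \ref{theorem2}. A secondary subtlety is that the distribution of type-(a), (b), (c) facets depends on the forms of $f_1, f_2$: if both are of \emph{critical form} then both $\beta$ and $\gamma$ appear as direct facets of $\sigma$ (two $t = 0$ paths) with no longer paths contributing; if only one is of critical form, the other critical cell is reached via a longer path originating at a type-(c) facet; and if neither is, the two critical cells are reached via paths through type-(a) facets. Each sub-case must be checked separately, but in all of them exactly two critical $p$-cells are reached, as claimed.
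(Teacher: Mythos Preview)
Your outline identifies the right three types of facets and correctly invokes Lemmas \ref{lemma11}--\ref{lemma13} for the initial pruning, but the uniqueness argument has a genuine gap. You assert that ``at each stage all but one continuation lies in $S_1 \cup \mu_1(S_1)$ and is therefore excluded''; this is not the mechanism that actually works. At a typical intermediate cell $\alpha = \mu(y_i)$, many facets lie not in $S_1\cup\mu_1(S_1)$ but in $\mu_{b}(S_{b})$ for some $b>1$ (they are images of the matching, hence cannot serve as $y_{i+1}$ by the very definition of an alternating path, not by Lemma \ref{lemma13}); and, more importantly, several facets lie in $S$ and \emph{are} legitimate continuations. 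For instance, when $A_1,A_2\neq\emptyset$ and $|A_1\cup A_2|\geq 3$, there are several $x\in A_1\cup A_2$ with $\sigma\setminus\{<x>\}\in S_{b_x}$, and after matching up, each of the resulting $(p+1)$-cells again has more than one facet in $S$. Lemma \ref{lemma11} tells you where these cells sit in the matching, but it does not by itself force a single branch.

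What the paper does, and what your plan is missing, is a separate dead-end analysis: one shows (Lemma \ref{lemma16}) that from a cell of the form $\alpha=\{\sigma\cup\{<b>:b\in B\}\}\setminus\{<a>:a\in A\}$ the only admissible continuations lower the minimum of $B$, and hence any path in which both $B\cap B_1$ and $B\cap B_2$ become nonempty is eventually forced into a cell with no admissible facet and dies. A companion argument (Lemma \ref{lemma20}) pins down that once $b_{i1}\in B$ the set $B$ must equal $B_i$; together these force $y_1=\sigma\setminus\{<x_{1r}>\}$ or $\sigma\setminus\{<x_{2l}>\}$ and a uniquely determined sequence of swaps thereafter. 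A further lemma (Lemma \ref{lemma15}) is needed at the moment the path has exhausted $B_i$: it shows that exactly one of $\alpha\setminus\{f_1\},\alpha\setminus\{f_2\}$ avoids $S_1$, so that the path then drops the correct $f_j$ (and possibly performs one more Lemma \ref{lemma12} step). Without these three ingredients your proposal does not establish that only two paths survive; in particular, the blanket claim that all but one branch lies in $S_1\cup\mu_1(S_1)$ is false at intermediate stages.
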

\begin{proof}
Define the sets $A_i, B_i, i \in \{1, 2\}$ to be 
$A_i = T \cap \{\text{Im} f_i \setminus \{f_{i}(k)\}\}$ and
$B_i = \{z_1, \ldots,\widehat{z_k}, \ldots,  z_n\} \setminus \text{Im} \, f_i$.
Since $A_i \subset T$ and $B_i \subset \{z_1, \ldots, z_n\}, A_i \cap B_i = \emptyset \ \text{for} \ 1 \leq i \leq 2.$
If $z_i \notin \{f_1(i), f_2(i)\}$ for $i \neq k$,
then $z_i \notin \text{Im} \, f_1, \text{Im} \, f_2, T$ and so $z_i \notin X_{\sigma}$.
Since $z_i \neq 1, <z_i> \ \sim \sigma , <1>$ and thus $\sigma \in S_1$. Therefore, $z_i \in \{f_1(i), f_2(i)\} \ \forall \ 
i \in [n] \setminus \{k\}$, thus implying that $B_1 \cap B_2 = \emptyset, B_1 \subset \text{Im} \, f_2$ and $B_2 \subset 
\text{Im} \, f_2$.  

Let $x \in A_1 \cap A_2$. For $i \in [n] \setminus \{k\}, z_i \in \{f_1(i), f_2(i)\}$ and therefore,
$\exists \ i \neq j \in [n]$ such that $f_1(i) = f_2(j) = x$. Here, $X_i^{\tau} \cap X_j^{\tau} \neq \emptyset$, 
a contradiction
to  \ref{theorem2} $(iv)$ and hence $A_1 \cap A_2 = \emptyset.$

Let $W = \{z \ | \ z = f_{1}(i) = f_2(i)\}$
and $T_1 = T \setminus \{A_1 \cup A_2\}$. 
Given a set $B \subset B_1 \cup B_2$, define a corresponding set
$A \subset A_1 \cup A_2$ as follows. 
If $z_i \in B \cap B_1$, then 
$z_i  \neq f_1(i)$ and hence, $f_1(i) \in A$. For $z_i \in B \cap B_2$,
$f_2(i) \in A$. The number of elements in $A$ and $B$ are the same.

Let the facet of $\sigma$ in the alternating path $c$ be $y_1$. We consider the following two cases.
\vspace{0.3 cm}

\noindent{\bf{Case 1.}} $A_1, A_2 = \emptyset$. \\
 In this case, $f_1(i) = f_2(i) = z_i \ \forall \ i \neq k$ and $\{ f_1(k), f_2(k)\} \cap T \neq \emptyset.$ 
If an element $x$ of $T$ does not belong to $X_{\tau}$, then $\sigma \setminus \{<x>\} \sim \ <x>$ and so 
$\sigma \setminus \{<x> \} \in S_1.$
 If $x \in X_{\tau}$, then $x \in \{f_1(k), f_2(k)\}$
  and $\sigma \setminus \{<x>\}  \subset N(z_1 \ldots z_{k-1} x z_{k+1} \ldots z_n)$
 which implies $\sigma \setminus \{<x>\} \in S_1$ and from  \ref{lemma13},
 $\sigma \setminus \{<x>\} \notin c$. Therefore, $y_1$ is either $\sigma \setminus \{f_1\}$ or $\sigma \setminus \{f_2\}$.
 Each of  $X_{\sigma \setminus \{f_1\}}$ and
 $X_{\sigma \setminus \{f_2\}}$ will be either $[m]$ or $[m] \setminus \{1\}.$ 
 Further, $\sigma \setminus \{f_1\}$ and $\sigma \setminus \{f_2\}$ satisfy the properties $(ii), (iii)$ and $(iv)$
 of Theorem  \ref{theorem2}. By Theorem \ref{theorem2} $(v)$,
 at least one of $f_1(k) \ \text{or} \ f_2(k) < z_{i_0}$, where $z_{i_0} = \text{min}\{z_1, \ldots, \widehat{z_k}, 
 \ldots, z_n\}$.
 
 If $f_1(k)  < z_{i_0}$, then $\sigma \setminus \{f_2\}$ is a critical $p$-cell as it satisfies all the five
 conditions of Theorem  \ref{theorem2} and $c = \{\sigma , \sigma \setminus \{f_2\}\}$.\\
 Similarly, $f_2(k) < z_{i_0}$ implies $\sigma \setminus \{f_1\}$  is critical and 
 $c = \{\sigma , \sigma \setminus \{f_1\}\}$.
 
Now, assume that $f_1(k) < z_{i_0}, f_2(k) > z_{i_0}$ and $y_1 = \sigma \setminus \{f_1\}.$
Since $f_2(i) = z_i \ \forall \ i \neq k$ and $f_2(k) > z_{i_0}, \sigma \setminus \{f_1\} \in S_{z_{i_0}}$
by  Lemma \ref{lemma12}. So $\mu(y_1) = \{\sigma \, \cup <z_{i_0}> \} \setminus \{f_1\}$. 
If $x \in [m] \setminus \{z_1, \ldots, z_n , f_2(k)\}$, then $\mu(y_1) \setminus \{<x>\}  \sim \ <x>$ and
hence belongs to $S_1$. $\mu(y_1) \setminus \{<x> \} = y_1$ if $x = z_{i_0}$.
$\{\sigma \, \cup <z_{i_0}>\} \setminus \{f_1, f_2\} \sim \ <z_i>$ for $i \neq i_0, k$
and so $y_2$ has to be $\mu(y_1) \setminus \{<x>\}$ where $x = f_2(k)$. 
By  Theorem \ref{theorem2},
$y_2 \subset N(\tilde{z_1}\ldots \tilde{z_n})$, 
where $\tilde{z}_k = f_{2}(k), \tilde{z}_{i_0} = 1$ and $\tilde{z}_{i} = z_i, i \neq i_0, k$,
is a critical cell and the alternating path in this case is
$\{\sigma, \sigma \setminus \{f_1\}, \{\sigma \, \cup <z_{i_0}>\} \setminus \{f_1\}, 
\{\sigma \, \cup <z_{i_0}> \} \setminus \{f_1, <f_2(k)>\} \}.$

If $f_2(k) < z_{i_0}, f_1(k) > z_{i_0}$ and $y_1 = \sigma \setminus \{f_2\}$, then the alternating path is
$\{\sigma, \sigma \setminus \{f_2\}, \{\sigma \, \cup <z_{i_0}>\} \setminus \{f_2\}, \{\sigma \, \cup <z_{i_0}> \} 
\setminus \{f_2, <f_1(k)>\} \}.$
\vspace{0.3 cm}

\noindent{\bf{Case 2.}} At least one of $A_1$ or $A_2$ is non empty.\\
We  prove some results necessary for the construction of the  alternating paths $c$ from $\sigma$.
\begin{lem} \label{lemma15}
 Let $\alpha = \{\sigma \cup \{<b> | \ b \in B\}\} \setminus \{<a> | \ a \in A\}$, where $B \neq \emptyset$. Then, 
 \begin{itemize}
  \item[(i)] $\alpha \setminus \{<x>\} \in S_1 \ \forall \ x \in T_1.$
  \item[(ii)] If $B_1 \cap B \neq \emptyset, B_1$ (or $B_2 \cap B \neq \emptyset, B_2$),
  then $\alpha \setminus \{f_1\}, \alpha \setminus \{f_2\} \in S_1$.
  \item[(iii)] If $B = B_1 \cup B_2$ and $B_1, B_2 \neq \emptyset$,
  then $\alpha \setminus \{f_1\}, \alpha \setminus \{f_2\} \in S_1$.
  \item[(iv)] If $B = B_1$ (or $B = B_2$), then 
  $\alpha \setminus \{f_1\} \in S_1$ and $\alpha \setminus \{f_2\} \notin S_1$ (or $\alpha \setminus \{f_1\} \notin S_1$ and 
  $\alpha \setminus \{f_2\} \in S_1$.)
 \end{itemize}

\end{lem}
\begin{proof} \hspace{10 cm}
 \begin{itemize} 
  \item [(i)] Let $x \in T_1$. If $x \notin X_{\tau},$ then $x \notin X_{\alpha \setminus <x>}$ and therefore $
  \alpha \setminus \{<x>\} \sim \ <x>$, which implies  $\alpha \setminus \{<x>\} \in S_1.$
If $x \in X_{\tau}$, then $x \in \{f_1(k), f_2(k)\}$
  and $\alpha \setminus \{<x>\}  \sim w_1 \ldots w_{k-1} x w_{k+1} \ldots w_n$, where $ \alpha \sim w_1w_2\dots w_n$,
 which implies that $\alpha \setminus \{<x>\} \in S_1$.

 \item[(ii)] Since $B \cap B_1 \neq B_1$, $\exists \ b \in B_1 \setminus B$ such that $<b> \ \notin \alpha, 
 b \in \text{Im} \, f_2$ and $b \notin \text{Im} \, f_1$. Thus $\alpha \setminus \{f_2\} \sim \ <b>$.
 $A \cap A_1 \neq \emptyset$ because $B \cap B_1 \neq \emptyset$. So if 
 $x \in A \cap A_1, <x> \ \notin \alpha $ and $<x> \ \sim  f_2$. Hence, $\alpha \setminus \{f_1\} \sim \ <x>$.

 \item[(iii)] If $B = B_1 \cup B_2$, then $ A = A_1 \cup A_2$. Thus, $\alpha \setminus \{f_1\} \sim \ <x>$, where 
 $x \in A_1$ and $\alpha \setminus \{f_2\} \sim \ <y>$, where $y \in A_2$.

 \item[(iv)] Let $B = B_1$ and $B \cap B_2 = \emptyset$.

 As in the earlier cases, $\alpha \setminus \{f_1\} \sim \ <x>$ for any $x \in A_1$ 
 and hence $\alpha \setminus \{f_1\} \in S_1$.

 If $z \in B_1$, then 
 $<z> \ \in \alpha$ and so $<z>$ is not a neighbor of $\alpha \setminus \{f_2\}$.
 If $z \in B_2$ or $W$, then $z \in \text{Im} f_1$ and so $<z> \ \nsim f_1$ and thus $<z> \ \nsim \alpha \setminus \{f_2\}.$
 If $x \in A_1$, then $x \in \text{Im}f_1$ and $<x> \ \nsim f_1$.

 If $x \in A_2$, then $<x> \ \in \alpha$. Thus, the only possible neighbors of $\alpha \setminus \{f_2\}$ are 
 $<1>$ and $y_1 \ldots y_n, y_i \neq y_j$ for $i \neq j$.

Let $\alpha \setminus \{f_2\} \sim y_1 \ldots y_n = y$.
For $z \neq 1 \in B_2 \cup W \cup A_1$, since $y \sim f_1$, we see that $y_k \neq z$. 
If $x \in T_1 \cup A_2 \cup B_1$, then $<x> \ \in  \alpha \setminus \{f_2\}$ and $y \sim \ <x>$ which implies $y_k \neq x$.
The only possible choice for $y_k$ is $1$. Therefore, $\alpha \setminus \{f_2\} \notin S_1$.
  \end{itemize}
\end{proof}

To construct the alternating path from the critical $p+1$-cell $\sigma$ to a critical $p$-cell in $\N(G)$,
we first require the facet $y_1$ of $\sigma$ in $c$.

If $A_1 = \emptyset$ and $A_2 \neq \emptyset$, then $f_1 = z_1 \ldots z_{k-1} f_1(k) z_{k+1} \ldots z_n$. 
Hence, $\sigma \setminus \{f_1\} \sim \ <b> \ \forall \ b \in B_2$ and $\sigma \setminus \{<x>\} \sim \ <x>
\ \forall \ x \in T_1 \setminus \{f_1(k), f_2(k)\}$. For $x \in T \cap \{f_1(k), f_2(k)\}$, we observe that $x \neq 1$
and $\sigma \setminus \{<x>\} \sim z_1 \ldots z_{k-1} x z_{k+1} \ldots z_n \sim \ <1>$ and thus
$\sigma \setminus \{<x>\} \in S_1$.
Therefore $y_1 = \sigma \setminus \{f_2\} $ or $\sigma \setminus \{<x>\}$, where $x \in A_2$. \\
Similarly, in the case $A_1 \neq \emptyset$ and $A_2 = \emptyset$, we see that $y_1 = \sigma \setminus \{f_1\}$ or 
$\sigma \setminus \{<x>\}, x \in A_1$.

Let $i \neq j \in \{1, 2\}$. Consider the case when $y_1 = \sigma \setminus \{f_i\}$. Let $z_{i_0} = \text{min}
\{z_1, \ldots, \widehat{z_k}, \ldots,$ $  z_n\}$. If $A_i = \emptyset, A_j \neq \emptyset$, then
$y_1 = \sigma \setminus \{f_j\}$. Since $f_i(k) \neq z_l \ \forall \ l \in [n] \setminus \{k\}, f_i(k) \neq z_{i_0}$ and thus 
$f_i(k) > z_{i_0}$ or $f_i(k) < z_{i_0}$.
\vspace{0.3 cm}

\noindent{\bf{Subcase 1.}} $f_i(k) < z_{i_0}$.

Since $f_i = z_1 \ldots z_{k-1} f_i(k) z_{k+1} \ldots z_n, \sigma \setminus \{f_j\} $ satisfies all the criteria of  
Theorem \ref{theorem2} and thus  a critical $p$-cell, {\it i.e.} $c = \{\sigma, \sigma \setminus \{f_j\}\}$.
\vspace{0.3 cm}

\noindent{\bf{Subcase 2.}} $f_i(k) > z_{i_0}$.

By  Lemma \ref{lemma12}, $\mu(y_1) =  \{\sigma \, \cup <z_{i_0}>\} \setminus \{f_j\}$. If $<x> \ \in \mu(y_1)$, 
$x \in T_1 \cup A_j \cup \{z_{i_0}\}$. Since $y_2 \neq y_1, y_2 \neq \mu(y_1) \setminus \{<z_{i_0}>\}$. If $x \in T_1 \cup A_j 
\setminus \{f_i(k)\}$, then $\mu(y_1) \setminus \{<x>\} \sim \ <x>$ and $\mu(y_1) \setminus \{f_i\} \sim \ <z_l>, l \neq i_0, k$.
Therefore, $y_2 = \mu(y_1) \setminus \{<f_i(k)>\}$ and $y_2 \sim w_1 \ldots w_n$, where $w_k = f_i(k), w_{i_0} = 1$ and 
$\forall \ l \neq i_0, k, w_l = z_l$. By Theorem \ref{theorem2}, $y_2$ is a critical cell and 
$c = \{\sigma, \sigma \setminus \{f_j\}, \{\sigma \, \cup <z_{i_0}>\} \setminus \{f_j\}, \{\sigma \, \cup <z_{i_0}>\} \setminus
\{f_j, <f_i(k)>\}\}$.
\vspace{0.3 cm}

If $A_1, A_2$ are both non empty, then $\sigma \setminus \{f_i\} \sim \ <b>  \forall \ b \in B_j$. 
If $x \in T_1 \setminus \{f_1(k), f_2(k)\}$, then $\sigma \setminus \{<x>\} \sim \ <x>$ and 
$\sigma \setminus \{<x>\} \sim z_1 \ldots z_{k-1} x z_{k+1} \ldots z_n$ for $x \in \{f_1(k), f_2(k)\}$. Therefore,
$y_1 = \sigma \setminus \{<x>\}$, where $x \in A_1 \cup A_2$. 

Let $b \in B \subset B_1 \cup B_2$. If $b = z_l \in B_i$, then $f_i(l) \neq z_l, f_j(l) = z_l$ and $f_i(l) \in A_i$. The element 
$f_i(l)$ in $A_i$ is called the element corresponding to $b$ and is denoted by $x_b$. Similarly, if $x \in A_1 \cup A_2$,
then $b_x$ is called the element of $B$ corresponding to $x$.\\
Let $B_1 = \{b_{11} < b_{12} < \ldots < b_{1r}\}, B_2 = \{b_{21} < b_{22} < \ldots < b_{2l}\}$
 and $B = \{b_1 < b_2 < \ldots < b_q\}$ be ordered sets and the corresponding sets $A_1, A_2$ and $A$ be $\{x_{11}, x_{12},
 \ldots, x_{1r}\}$, $\{x_{21}, \ldots, x_{2l}\}$ and $\{x_1, \ldots, x_q\}$ respectively {\it i.e.} $x_{1i} = x_{b_{1i}},
 x_{2j} = x_{b_{2j}}$ and $x_t = x_{b_t} \ \forall \  1 \leq i \leq r, 1 \leq j \leq l, 1 \leq t \leq q$.
 Let $\alpha = \{\sigma \, \cup \{<b> | \ b \in B\}\} \setminus \{<x> | \ x \in A\}$. By Lemma \ref{lemma11}, 
 $\alpha \in \mu_{b_1}(S_{b_1})$. Here, 
 $B$ and  at least one of $B_1$ or $B_2$ are always non empty sets. Let $b_0 = \text{min}\{b \ | \ b \in B_1 \cup B_2\}$.
 \begin{lem} \label{lemma16}
  Let $B \neq B_1, B_2, \eta = \{\sigma \, \cup \{<b_i> | \ 2 \leq i \leq q\} \} \setminus \{<x_i> | \ 1 \leq i \leq q\} \in S_{b_1}$ and $\alpha = \mu_{b_1}(\eta) = \eta \ \cup <b_1> \ \in \mu_{b_1}(S_{b_1})$. If 
  \begin{itemize}
  \item[(i)] $\alpha \in c$ and $b_0 \notin B$, then the only facets  $y$ of $\alpha$ which can 
  belong to $c$ are of the form $\alpha \setminus \{<x>\}$, where $b_x < b_1$ and $x \in \{A_1 \cup A_2\} \setminus A$.
  \item[(ii)] $b_0 = b_1$, then no facet of $\alpha$ belongs to $c$ and thus $\alpha \notin c$. 
  \end{itemize}
  
 \end{lem}
\begin{proof}
By  Lemma \ref{lemma15}, $\forall \ x \in T_1, \alpha \setminus \{<x>\}, \alpha \setminus \{f_1\}$
and $\alpha \setminus \{f_2\}$ belong to $S_1$.
For $b \in B \setminus \{b_1\}$, by Lemma \ref{lemma11}, $\alpha \setminus \{<b>\} \in \mu_{b_1}(S_{b_1})$ and
$\alpha \setminus \{<b_1>\} = \eta$. Since $\alpha \in c$ implies that $\eta \in c$,
the facet $y \neq \alpha \setminus \{<b_1>\}$. For $x \in A_1 \cup A_2$ such that $b_x > b_1$, by  Lemma \ref{lemma11},
$\alpha \setminus \{<x>\} \in \mu_{b_1}(S_{b_1})$. Thus, the only possible facets of $\alpha$ which can belong to $c$
 are of the type $\alpha \setminus \{<x>\}$, where $b_x < b_1$.

 If $b_0 = b_1$, then no facet of $\alpha$ can belong to $c$. Since $\alpha \in \mu_{b_1}(S_{b_1})$, it is not a critical cell and thus $\alpha \notin c$.

\end{proof}

From the above Lemma, we conclude that no cell
$\alpha = \{\sigma \ \cup \{<b> | \  b \in B\}\} \setminus \{<x> | \ x \in A\}$
belongs to $c$, if $B \cap B_1$ and $B \cap B_2$ are both non empty. This follows by an inductive argument.
If $b_1 \neq b_0$, then by Lemma \ref{lemma16} $(i)$, 
$\alpha_1 = \{\alpha \ \cup <b_x>\} \setminus \{<x>\} \in c$, where $b_x < b_1$,
 if $\alpha \in c$. Inductively $\alpha_t = \{\alpha_{t-1} \cup <b_0>\} \setminus \{<x_{b_0}>\} \in c$ which contradicts 
Lemma \ref{lemma16} $(ii)$.
Similarly if $b_0 \in B \cap B_i$ and $B \subsetneq B_i$ for $i \in \{1,2\}$, then
 $\alpha \notin c$. We now show that if $b_{11}$ or $b_{21} \in B$, then $B = B_1$ or $B_2$.

\begin{lem} \label{lemma20}
Let  $\alpha = \{\sigma \, \cup \{<b>| \ b \in B\} \} \setminus \{<x> | \ x \in A\}$ and $b_{11} \in B$. 
If $\alpha \in c$, then $B = B_1$.

\end{lem}
\begin{proof}
By Lemma \ref{lemma11}, $\alpha \in \mu_{b_{11}}(S_{b_{11}}) $ and is thus not a critical cell.
If $B \subsetneq B_1$, then by Lemma \ref{lemma16} $(i)$, the facet of $\alpha \in c$ is of the form $\alpha \setminus \{<x>\}$,
where $b_x < b_{11}$. Since $b_{11}$ is the least element of $B_1, b_x \in B_2$, a contradiction. Thus $B = B_1$.

\end{proof}

Hence, we can conclude that $y_1 = \sigma \setminus \{<x_{1r}>\}$ or $\sigma \setminus \{<x_{2l}>\}$.
\vspace{0.3 cm}

\noindent{\bf{Subcase 3.}} $y_1 = \sigma \setminus \{<x_{1r}>\}$.

Using Lemmas \ref{lemma16} and \ref{lemma20}, 
$y_i = \{\sigma \, \cup \{<b_{1t}> | \ r-i+1 < t \leq r\}\}
\setminus \{<x_{1r}>, \ldots, <x_{1r-i+1}>\}, 1 \leq i \leq r$ and $\mu(y_r) = \sigma \cup \{<b> | \ b \in B_1\} \setminus \{<a> |
\ a \in A_1\}$. From  Lemma \ref{lemma15} and Lemma \ref{lemma16}, $y_{r+1} = \mu(y_r) \setminus \{f_2\}$ or $\mu(y_r) 
\setminus \{<x>\}, x \in A_2$ and $b_x < b_{11}$.
If $y_{r+1} = \mu(y_r) \setminus \{<x>\}$, then 
$\mu(y_{r+1}) = y_{r+1} \cup <b_x>, B \cap B_1, B \cap B_2 \neq \emptyset$ and therefore $\mu(y_{r+1}) \notin c$. 
Thus, $y_{r+1} = \mu(y_r) \setminus \{f_2\}$. 
The vertex $w_1 \ldots w_n$ defined by $w_i = f_1(i) \ \forall \ i \neq k$  and $w_k = 1$ is a neighbor of $y_{r+1}$. Let 
$w_{i_0} =\text{min}\{w_1, \ldots, \widehat{w_k}, \ldots, w_n\}$.
\begin{enumerate}
\item[(a)] $f_1(k) < w_{i_0}$.

$y_{r+1} = \mu(y_r) \setminus \{f_2\}$ satisfies all the properties of Theorem \ref{theorem2}
and is hence a critical
$p$-cell. The alternating path $c = \{\sigma, \sigma \setminus \{<x_{1r}>\}, \{\sigma \, \cup <b_{1r}>\} \setminus \{<x_{1r}>\}, \ldots, 
y_r, \mu(y_r), \mu(y_r) \setminus \{f_2\}\}$.

\item[(b)] $f_1(k) > w_{i_0}$.

Since $f_1(i) = w_i \ \forall \ i \neq k, f_1(k) > w_{i_0}$, by Lemma  \ref{lemma12},
$y_{r+1} = \mu(y_r) \setminus \{f_2\} \in S_{w_{i_0}}$, {\it i.e.} $\mu(y_{r+1}) = \{\mu(y_r) \, \cup <w_{i_0}>\} \setminus \{f_2\}$
($<w_{i_0}> \ \notin \mu(y_r)$, since $w_{i_0} = z_i \Rightarrow w_{i_0} \in B_2$ and $<w_{i_0}> \ \notin \mu(y_r)$ and 
$w_{i_0} \in A_1 \Rightarrow <w_{i_0}> \ \notin \mu(y_r)$).
\begin{claim}
 $y_{r+2} = \mu(y_{r+1}) \setminus \{<f_1(k)>\}$.
\end{claim}
If $<x> \ \in \mu(y_{r+1})$, then $x \in T_1 \cup B_1 \cup A_2 \cup \{w_{i_0}\}$. If $x \in T_1 \cup B_1 \cup A_2$ and
$x \neq f_1(k)$,
then $x \notin \text{Im} \, f_1$ and hence $\mu(y_{r+1}) \setminus \{<x>\} \in S_1$. Further, 
$\mu(y_{r+1}) \setminus \{f_1\} \sim \ <x>$ for any $x \in A_1$. So, $y_{r+2} = \mu(y_{r+1}) \setminus \{<f_1(k)>\}$.
The claim is thus proved.

$y_{r+2} \subset N(v_1 \ldots v_n)$, where $v_{i_0} = 1, v_{k} = f_1(k)$ and $v_i = w_i \ \forall \ i \neq i_0, k$,
satisfies all the conditions of Theorem \ref{theorem2} and is thus a critical $p$-cell. The alternating path 
$c = \{\sigma, \sigma \setminus \{<x_{1r}>\}, \{\sigma \, \cup <b_{1r} \} \setminus \{<x_{1r}>\}, \ldots, 
\{\sigma \, \cup \{<b_{11}>, \ldots, <b_{1r}>\}\} \setminus \{<x_{11}>, \ldots, <x_{1r}>\}, 
\mu(y_{r}) \setminus \{f_2\}, \{\mu(y_r) \, \cup <w_{i_0}> \} \setminus \{f_2\},\{ \mu(y_{r}) \, \cup <w_{i_0}>\} \setminus \{f_2, <f_1(k)>\}
\}$.
\end{enumerate}
\vspace{0.3 cm}

\noindent{\bf{Subcase 4.}} $y_1 = \sigma \setminus \{<x_{2l}>\}$.

Here, $y_i = \{\sigma \, \cup \{<b_{2t}>| \ l-i+1 < t \leq l\}\}
\setminus \{<x_{2l}>, \ldots, <x_{2l-i+1}>\}, 1 \leq i \leq l$ and $y_{l+1} = \mu(y_l) \setminus \{f_1\}$. The vertex
$ u_1 \ldots u_n$, where $u_i = f_2(i), i \neq k$ and $u_k = 1$ is a neighbor of $y_{l+1}$. 
Let $u= \text{min}\{u_1, \ldots, u_n\} \setminus \{1\}$.

\begin{enumerate}
\item[(a)] $f_2(k) < u$.

The alternating path $c$ is $\{ \sigma, \sigma \setminus \{<x_{2l}>\}, \ldots, \mu(y_l) = 
\sigma \cup \{<b> | \ b \in B_2\} \setminus \{<a> | \ a \in A_2\}, \mu(y_l) \setminus \{f_1\}\}.$

\item[(b)] $f_2(k) > u$.

In this case,  $c = \{ \sigma, \sigma \setminus \{<x_{2l}>\}, \ldots, \mu(y_l), \mu(y_l)  \setminus \{f_1\},
\{\mu(y_l)  \, \cup <u> \}\setminus \{f_1\}, \{\mu(y_l)$ $ \, \cup <u>\} \setminus \{f_1, <f_2(k)>\}\}.$

\end{enumerate}
\end{proof}
Consider the critical cell $\tau = \{<2>, <3>, \ldots, <m>\} \subset N(<1>)$. Since every facet of $\tau$ is in $S_1$, 
there exists no alternating path from $\tau$.


Our objective now is to first study the $\Z_2$ homology groups of the Morse complex corresponding to the acyclic matching 
$\mu$ on $P$.

 Let the Discrete Morse Complex corresponding to the acyclic matching $\mu$  be
  $\M = (\M_n , \partial_n)$, $n \geq 0$ where $\M_i$ denotes the free abelian groups over $\mathbb{Z}_2$ generated
  by the critical $i$-cells. 
  For any two critical cells $\tau$ and $\sigma$ such that dim($\tau$) = dim($\sigma)$ +1, the incidence number
$[\tau: \sigma]$ is either 0 or 1.

Let $C_i$ denote the set of critical cells of dimension $i$. Since $<1>$ is the only $0$-dimensional critical cell,
$C_0 = \{<1>\}$. If $n \geq 3$, using Theorem \ref{theorem2} and Lemma \ref{critical1}, we can conclude that 
$C_i = \emptyset$ for $0 < i \leq p-1$. Let $C_{p} = \{\alpha_1, \ldots, \alpha_{r_1}\}$ and 
$C_{p+1} = \{\tau_1, \ldots, \tau_{r_2}\}$. Let
$A = [a_{ij}]$ be a matrix of order  $|C_{p}| \times |C_{p+1}|$, where
$a_{ij} = 1$, if there exists an alternating path from $\tau_j$ to $\alpha_i$ and $0$
if no such path exists.  Using Theorem \ref{theorem3}, each column of $A$ contains exactly two
 non zero elements which are $1$ (except, when $\{<2>, \ldots, <m>\} \in C_{p+1}$ 
 and in this case the column of $A$ corresponding to this cell is zero,
 as there is no alternating path from $\{<2>, \ldots, <m>\}$ to any critical cell).

\begin{thm} \label{main2}
 Let $m-n = p \geq 1$. Then $H_p(\N(G) ; \mathbb{Z}_2) \neq 0.$
\end{thm}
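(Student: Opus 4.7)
The plan is to exhibit a nonzero class in $H_p(\N(G); \mathbb{Z}_2)$ by using the Morse complex $\M$ from Proposition \ref{prop6} together with the description of its boundary $\partial_{p+1}$ via the matrix $A$. The central observation is that, by Theorem \ref{theorem3}, every column of $A$ is either identically zero (only for the column indexed by $\{<2>,\ldots,<m>\}$, which is a $(p+1)$-cell precisely when $n=3$) or has exactly two entries equal to $1$. Over $\mathbb{Z}_2$ this means every column sum of $A$ is $0$, so the linear functional $\phi : \M_p \to \mathbb{Z}_2$ sending each critical $p$-cell to $1$ vanishes identically on $\mathrm{im}(\partial_{p+1})$, and hence descends to a well-defined functional on the cokernel $\M_p/\mathrm{im}(\partial_{p+1})$.

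Next I would produce an explicit critical $p$-cell on which $\phi$ is nonzero. Taking $z_1 z_2 \cdots z_n = 12 \cdots n$ (so $k=1$) and letting $f:[n]\to[m]$ be the injection $f(i)=i$, the simplex
\[
\alpha = \{<n+1>, <n+2>, \ldots, <m>\} \cup \{f\}
\]
is $p$-dimensional and satisfies the five conditions of Theorem \ref{theorem2}: $X_\alpha=[m]$; $<x>\in\alpha$ for every $x\in\{n+1,\ldots,m\}$; $X_i^{\{f\}}=\{i\}\geq z_i=i$; the sets $X_i^{\{f\}}$ are pairwise disjoint; and $1\in X_1^{\{f\}}$ lies strictly below $\min\{z_2,\ldots,z_n\}=2$. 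Hence $\alpha$ is critical and $\phi(\alpha)=1$.

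For $p\geq 2$, Theorem \ref{theorem2} together with Lemma \ref{critical1} force $C_i=\emptyset$ for $1\leq i\leq p-1$, so $\M_{p-1}=0$ and $\partial_p=0$. Thus $\ker\partial_p=\M_p$, the group $H_p$ equals the cokernel of $\partial_{p+1}$, and $\phi$ descends to a nonzero functional on $H_p$ taking the value $1$ on $[\alpha]$; this proves $[\alpha]\neq 0$ and hence $H_p\neq 0$.

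The main obstacle is the case $p=1$, where $\partial_1 : \M_1 \to \M_0 \cong \mathbb{Z}_2$ need not vanish a priori and so $\ker\partial_1$ may have codimension $1$ in $\M_1$. To handle this case I would analyze the alternating paths from critical $1$-cells $\{<x>, f\}$ down to the unique critical $0$-cell $<1>$ in the same spirit as Theorem \ref{theorem3}; it suffices to exhibit a single critical $1$-cell admitting an even number (possibly zero) of such alternating paths, for then $\phi(\alpha)=1$ and $\partial_1\alpha=0$ together yield a nonzero class $[\alpha]\in H_1$. Alternatively, this case follows immediately from Theorem \ref{thm1} of \cite{NS}, which already gives $H_1=\mathbb{Z}_2\neq 0$.
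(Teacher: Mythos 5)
Your argument for $p\geq 2$ is essentially the paper's: the mod-$2$ column sums of $A$ vanish, so $\operatorname{im}(\partial_{p+1})$ lies in the kernel of the ``sum of coordinates'' functional and $\operatorname{rank}(\partial_{p+1})<|C_p|$, while $C_{p-1}=\emptyset$ gives $\ker\partial_p=\M_p$. Your explicit critical $p$-cell $\alpha=\{<n+1>,\ldots,<m>\}\,\cup\,\{12\cdots n\}$ (which does satisfy all five conditions of Theorem \ref{theorem2}) is a useful addition: the paper leaves $|C_p|\geq 1$ implicit.

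The genuine gap is the case $p=1$, which you correctly identify as the obstacle but do not close. Your fallback citation to Theorem \ref{thm1} only covers $n\geq 4$, so the case $m=4$, $n=3$ would remain unproved, and your primary suggestion (exhibiting a critical $1$-cell with an even number of alternating paths down to $<1>$) is a plan rather than an argument. The paper closes this case with one observation that is already at your disposal: the Morse complex computes $H_*(\N(G);\mathbb{Z}_2)$, and $\operatorname{conn}(\N(G))\geq m-n-1=0$ (by the {\v C}uki{\'c}--Kozlov bound), so $H_0(\M)\cong\mathbb{Z}_2$. Since $\M_0\cong\mathbb{Z}_2$ (the unique critical $0$-cell is $<1>$), the surjection $\M_0\twoheadrightarrow H_0(\M)=\M_0/\operatorname{im}(\partial_1)$ forces $\operatorname{im}(\partial_1)=0$, i.e.\ $\partial_1=0$; hence $\ker\partial_1=\M_1$ and your cokernel argument applies verbatim. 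With that one line added, your proof is complete and coincides with the paper's.
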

\begin{proof}
$\M_p  \cong \mathbb{Z}_2^{|C_p|}$ and $\M_{p+1} \cong  \mathbb{Z}_2^{|C_{p+1}|}$.
Since each column of $A$ is either zero (when $\{<2>, \ldots, <m>\} \in C_{p+1}$) or
contains exactly two non zero elements, both being $1$, the column sum is zero (mod $2$). Therefore, 
rank($A$) $< |C_{p}|$. In particular, rank of the boundary map $\partial_{p+1} : \mathbb{Z}_2^{|C_{p+1}|} \rightarrow 
\mathbb{Z}_2^{|C_{p}|}$ is strictly less than $|C_{p}|$. 

Since $\text{Hom}(K_2 \times K_n, K_m) \simeq \text{Hom}(K_2, K_m^{K_n}) 
\simeq \N(K_m^{K_n}) \simeq \N(G)$ and the maximum degree of $K_2 \times K_n$ is
$n-1$, $\text{conn}(\text{Hom}(K_2 \times K_n, K_m)) = \text{conn} (\N(G)) \geq m-n-1$. Hence $\N(G)$ is path connected
and therefore $H_0 (\N(G); \mathbb{Z}_2) \cong \mathbb{Z}_2$.

If $p= 1$, then since $\M_0 \cong \mathbb{Z}_2$, Ker($\partial_1$) $\cong \mathbb{Z}_2^{|C_1|}$, where $\partial_1: 
\M_1 \longrightarrow \M_0$. Hence $H_1(\N(G); \mathbb{Z}_2) \neq 0$.
If $p > 1$, then $C_{p-1} = \emptyset$ implies that  $\M_{p-1} = 0$. Thus, Ker($\partial_p$) $\cong \mathbb{Z}_2^{|C_{p}|}$, where 
$\partial_p : \M_p \longrightarrow \M_{p-1}$ is the boundary map. Since rank($\partial_{p+1}) < |C_{p}|$, we see that $H_p(\N(G); \mathbb{Z}_2) \neq 0$.
\end{proof}
We have developed all the necessary tools to prove the main result.
We recall the following results to prove Theorem \ref{main}.
\begin{prop} \label{prop7}(Theorem 3A.3, \cite{h}) 

 If $C$ is a chain complex of free abelian groups, then there exist short exact sequences
 \begin{center}
  $0 \longrightarrow H_n(C; \mathbb{Z}) \otimes \mathbb{Z}_2 \longrightarrow H_n(C; \mathbb{Z}_2) \longrightarrow$
  Tor$(H_{n-1}(C; \mathbb{Z}), \mathbb{Z}_2) \longrightarrow 0$
 \end{center}
 for all n and these sequences split.

\end{prop}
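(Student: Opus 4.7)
The plan is to compare $H_n(C;\mathbb{Z}_2)$ with $H_n(C;\mathbb{Z})$ by splitting the chain complex $C$ into its cycles and boundaries. Write $Z_n = \ker\partial_n$ and $B_n = \mathrm{im}\,\partial_{n+1}$. Since $C_{n-1}$ is free abelian and $B_{n-1}\subseteq C_{n-1}$, the group $B_{n-1}$ is itself free, so the tautological short exact sequence
\begin{equation*}
0 \longrightarrow Z_n \longrightarrow C_n \xrightarrow{\partial_n} B_{n-1} \longrightarrow 0
\end{equation*}
splits. Regarding $Z_\ast$ and the shifted complex $B_\ast[-1]$ (with $B[-1]_n := B_{n-1}$) as chain complexes with zero differential, this assembles into a short exact sequence of chain complexes of free abelian groups, which remains exact after applying $-\otimes\mathbb{Z}_2$.

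The next step is to feed this into the long exact sequence in homology. Because $Z_\ast$ and $B_\ast[-1]$ have zero differentials, the homology of $Z_\ast\otimes\mathbb{Z}_2$ and $B_\ast[-1]\otimes\mathbb{Z}_2$ is the complex itself, producing
\begin{equation*}
\cdots \to B_n\otimes\mathbb{Z}_2 \xrightarrow{j_n} Z_n\otimes\mathbb{Z}_2 \to H_n(C;\mathbb{Z}_2) \to B_{n-1}\otimes\mathbb{Z}_2 \xrightarrow{j_{n-1}} Z_{n-1}\otimes\mathbb{Z}_2 \to \cdots.
\end{equation*}
A snake‑lemma chase identifies the connecting map $j_k$ with the map $B_k\hookrightarrow Z_k$ tensored with $\mathbb{Z}_2$, and extracting the middle piece yields
\begin{equation*}
0 \to \mathrm{coker}(j_n) \to H_n(C;\mathbb{Z}_2) \to \ker(j_{n-1}) \to 0.
\end{equation*}

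To recognise the outer terms, I would use the tautological sequence $0 \to B_k \to Z_k \to H_k(C;\mathbb{Z}) \to 0$, which is a free resolution of $H_k(C;\mathbb{Z})$ since both $B_k$ and $Z_k$ are free abelian. Applying $-\otimes\mathbb{Z}_2$ and invoking the definition of $\mathrm{Tor}$ via free resolutions identifies $\mathrm{coker}(j_n) \cong H_n(C;\mathbb{Z})\otimes\mathbb{Z}_2$ and $\ker(j_{n-1}) \cong \mathrm{Tor}(H_{n-1}(C;\mathbb{Z}),\mathbb{Z}_2)$, giving the asserted short exact sequence.

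For the splitting, I would fix a retraction $r_n : C_n \to Z_n$ of $Z_n\hookrightarrow C_n$ (available because $C_n \cong Z_n \oplus B_{n-1}$), and let $p_n : C_n \to H_n(C;\mathbb{Z})$ be $r_n$ followed by the projection $Z_n\twoheadrightarrow Z_n/B_n$. Since $\partial C_n \subseteq B_{n-1}$ and $p_{n-1}$ annihilates $B_{n-1}$, the collection $\{p_n\}$ is a chain map into $(H_\ast(C;\mathbb{Z}),0)$; tensoring with $\mathbb{Z}_2$ and passing to homology yields $H_n(C;\mathbb{Z}_2) \to H_n(C;\mathbb{Z})\otimes\mathbb{Z}_2$, and a direct check on representatives shows this is a left inverse of the injection in the short exact sequence. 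The main subtlety is the snake‑lemma identification of the connecting homomorphism with the inclusion‑induced map $j_k$, which is what pins down the exact form of the sequence; once that identification is in hand, the rest is a formal application of the free resolution description of $\mathrm{Tor}$. Note that the splitting depends on the choice of retractions $r_n$ and is therefore not natural in $C$.
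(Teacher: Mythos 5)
Your argument is correct and complete; it is the standard proof of the universal coefficient theorem, which the paper does not prove but simply cites as Theorem 3A.3 of Hatcher, and your cycle/boundary decomposition, snake-lemma identification of the connecting map with $B_k\hookrightarrow Z_k$ tensored with $\mathbb{Z}_2$, and retraction-based splitting are exactly the argument given in that reference. Nothing further is needed.
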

\begin{prop} \label{prop8}(The Hurewicz Theorem)\\
 If a space $X$ is $(n - 1)$ connected, $n \geq 2$, then $\tilde{H_i} (X; \mathbb{Z}) = 0$ for $i < n$
and $\pi_n (X) \cong H_n (X; \mathbb{Z})$.

\end{prop}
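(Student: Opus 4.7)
The plan is to prove the Hurewicz Theorem via a standard CW--approximation/cellular argument, exploiting the hypothesis $n\geq 2$ crucially through the Freudenthal suspension theorem. First I would reduce to the CW setting: by CW approximation there is a weak equivalence $Y\to X$ with $Y$ a CW complex, and weak equivalences preserve both $\pi_i$ and $H_i(-;\Z)$, so it suffices to prove the statement for $Y$. Next, using the fact that $Y$ is $(n-1)$-connected together with the standard compression/cellular-approximation argument (inductively replace cells of dimension $1,\dots,n-1$ by higher-dimensional ones without changing the homotopy type), I would assume $Y$ has a single $0$-cell and no cells in dimensions $1,\dots,n-1$.

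Given such a $Y$, the vanishing statement is immediate: the cellular chain complex satisfies $C_i(Y)=0$ for $0<i<n$, so $\tilde H_i(Y;\Z)=0$ for $i<n$. It remains to prove $\pi_n(Y)\cong H_n(Y;\Z)$. Define the Hurewicz map $h:\pi_n(Y)\to H_n(Y;\Z)$ by $h[f]=f_{\ast}[S^n]$ where $[S^n]$ is the fundamental class; for $n\geq 2$, $\pi_n(Y)$ is abelian and one checks $h$ is a well-defined homomorphism, natural in $Y$.

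The core computation is at the level of skeleta. The $n$-skeleton is a wedge $Y^{(n)}=\bigvee_\alpha S^n_\alpha$. For this space, $H_n(Y^{(n)};\Z)$ is free abelian on the classes of the spheres, and $\pi_n(Y^{(n)})$ is also free abelian on the inclusions of the spheres. The latter is the key place where $n\geq 2$ is used: by Freudenthal suspension, $\pi_n(\bigvee_\alpha S^n_\alpha)\cong\pi_n(\prod_\alpha S^n_\alpha)\cong\bigoplus_\alpha\pi_n(S^n_\alpha)\cong\bigoplus_\alpha\Z$. Under these identifications $h$ is the identity, hence an isomorphism for $Y^{(n)}$. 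Then I would compare the long exact sequences of the pair $(Y^{(n+1)},Y^{(n)})$ in homotopy and in homology: using $\pi_{n+1}(Y^{(n+1)},Y^{(n)})\cong\bigoplus_\beta\Z$ with generators the characteristic maps of the $(n+1)$-cells, together with naturality of the Hurewicz map on those generators and a diagram chase, one obtains $\pi_n(Y^{(n+1)})\cong H_n(Y^{(n+1)};\Z)$. Finally, attaching cells of dimension $\geq n+2$ changes neither $\pi_n$ (by cellular approximation) nor $H_n$, so $\pi_n(Y)\cong\pi_n(Y^{(n+1)})\cong H_n(Y^{(n+1)};\Z)\cong H_n(Y;\Z)$.

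The main obstacle is the identification $\pi_n(\bigvee_\alpha S^n_\alpha)\cong\bigoplus_\alpha\Z$, which is where the hypothesis $n\geq 2$ is essential and where Freudenthal suspension (applied coordinatewise to reduce to $\pi_n(S^n)=\Z$) does the real work; without this, the Hurewicz map is not even surjective in general (as the failure for $n=1$, where abelianization intervenes, shows). The rest of the argument is a careful but routine combination of cellular approximation, CW-approximation, and five-lemma-style comparisons of the homotopy and homology long exact sequences of the pair $(Y^{(n+1)},Y^{(n)})$.
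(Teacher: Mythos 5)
The paper does not actually prove this statement: like Proposition \ref{prop7}, it is recorded purely as a quoted classical result (the Hurewicz theorem) to be invoked at the end of the proof of Theorem \ref{main}, so there is no in-paper argument to compare against. Your proposal is a correct outline of the standard cellular proof: CW approximation, replacement of an $(n-1)$-connected complex by one with a single $0$-cell and no cells in dimensions $1,\dots,n-1$, the resulting immediate vanishing of $\tilde H_i$ for $i<n$, the computation of $\pi_n$ of a wedge of $n$-spheres, and the comparison of the homotopy and homology exact sequences of $(Y^{(n+1)},Y^{(n)})$. One point deserves tightening: for an infinite index set one has $\pi_n\bigl(\prod_\alpha S^n_\alpha\bigr)\cong\prod_\alpha\mathbb{Z}$, not $\bigoplus_\alpha\mathbb{Z}$, so the chain of isomorphisms $\pi_n(\bigvee_\alpha S^n_\alpha)\cong\pi_n(\prod_\alpha S^n_\alpha)\cong\bigoplus_\alpha\mathbb{Z}$ is not literally correct as written. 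The standard fix is to note that any map $S^n\to\bigvee_\alpha S^n_\alpha$ (and any homotopy) has compact image, hence factors through a finite subwedge, reducing to the finite case, where the inclusion of the wedge into the product is $(2n-1)$-connected (by cellular structure or homotopy excision) and $n\ge 2$ gives $\pi_n(\bigvee)\cong\bigoplus_\alpha\mathbb{Z}$. With that repair, and the routine verification that the boundary map $\pi_{n+1}(Y^{(n+1)},Y^{(n)})\to\pi_n(Y^{(n)})$ matches the cellular boundary under the Hurewicz identifications, your sketch is a complete and standard proof; it is, for practical purposes, the argument one would cite from a textbook, which is exactly what the paper does.
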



\noindent {\bf  Proof of Theorem \ref{main}.}

If $n = 2$, then $ \text{Hom} (K_2 \times K_2, K_m) \simeq  \text{Hom}(K_2 \sqcup K_2, K_m)
 \simeq$ Hom $(K_2, K_m)$ $\times$ Hom $(K_2, K_m) \simeq S^{m-2} \times S^{m-2}$.
 Hence $\text{conn}(\text{Hom}(K_2 \times K_2, $ $ K_m)) = \text{conn} (S^{m-2} \times S^{m-2}) = m-3$.

  Let $n \geq 3$. If $m < n$, \ref{lemma2} shows that $K_m^{K_n}$  can be folded to the graph $G'$, where
$V(G') = \{<x>  | \ x \in [m]\}$. Then $N(<x>) = \{<y> | \ y \in [m] \setminus \{x\} \}$, for all $<x> \ \in V(G')$
and therefore $\mathcal{N}(G')$ is homotopic to the simplicial boundary of $(m-1)$-simplex.
Hence $\text{Hom}(K_2 \times K_n, K_m) \simeq \N (K_m^{K_n}) \simeq \mathcal{N}(G') \simeq S^{m-2}$.
Therefore $\text{conn}(\text{Hom}(K_2 \times K_n, K_m)) = m-3$.

If $m = n$, then for any $f \in V(K_n^{K_n})$
with $\text{Im}\,f = [n]$, $N(f) = \{f\}$. Since $n \geq 2$, $\mathcal{N}(K_n^{K_n})$ is disconnected.

Assume $m-n = p \geq 1.$ 
Since $\text{conn} (\text{Hom}(K_2 \times K_n, K_m)) \geq 
m-n-1$, $\tilde{H_i} (\text{Hom}(K_2 \times K_n, K_m); \mathbb{Z}) = 0 \ \forall \ 0 \leq i \leq p-1.$ 
Since $\text{Hom}(K_2 \times K_n, K_m) \simeq \N(G), \tilde{H_i} (\N(G); \mathbb{Z}) = 0 \ \forall \
0 \leq i \leq p-1$ and $H_p(\text{Hom}(K_2 \times K_n, K_m); \mathbb{Z}_2) \neq 0$, by Theorem  \ref{main2}. 
By Proposition \ref{prop7}, $H_p (\N(G); \mathbb{Z}_2) \cong H_p(\N(G);\mathbb{Z}) \otimes \mathbb{Z}_2 \oplus
\text{Tor} (H_{p-1}(\N(G); \mathbb{Z}), \mathbb{Z}_2)$. Since $p \geq 1, \N(G)$ is path connected and hence 
$\text{Tor} (H_0(\N(G); \mathbb{Z}), \mathbb{Z}_2) \cong \text{Tor}(\mathbb{Z}, \mathbb{Z}_2) = 0$. Further, since 
for any $0 < q < p$, $\tilde{H_i}(\N(G); \mathbb{Z}) = 0$, $H_p(\N(G); \mathbb{Z}) = 0$ implies 
$H_p(\N(G); \mathbb{Z}_2) = 0$, which is a contradiction. Hence $H_p(\text{Hom}(K_2 \times K_n, K_m); \mathbb{Z}) 
\cong H_{p}(\N(G); \mathbb{Z}) \neq 0$.
If $p=1$, then since the abelinazitation of 
$\pi_1(\text{Hom}(K_2 \times K_n, K_m))$ is  $H_1(\text{Hom}$ $ (K_2 \times K_n, K_m); \mathbb{Z}) \neq 0$, 
$\pi_1(\text{Hom}(K_2 \times K_n, K_m)) \neq 0$.
For $p > 1$, since $\text{Hom} (K_2  \times K_n , K_m)$ is simply connected and 
$H_p(\text{Hom}(K_2 $ $ \times K_n, K_m); \mathbb{Z}) \neq 0$, the result follows from  Proposition \ref{prop8}.

\bibliographystyle{plain}

\end{document}